\documentclass[a4paper,10pt]{amsart}

\usepackage[english]{babel}

\usepackage{verbatim}

\usepackage{amssymb} 

\usepackage{tikz, tikz-3dplot} 

\usepackage{lmodern} 

\usepackage{fancyvrb} 

\usepackage{url} 

\usepackage{hyperref} 

\usepackage{color} 


\newtheorem{thm}{Theorem}[section]
\newtheorem{trm}[thm]{Theorem}
\newtheorem{cor}[thm]{Corollary}
\newtheorem{prop}[thm]{Proposition}
\newtheorem{lem}[thm]{Lemma}
\newtheorem{lma}[thm]{Lemma}

\theoremstyle{definition}

\newtheorem{defi}[thm]{Definition}

\newtheorem{ex}[thm]{Example}

\theoremstyle{remark}
\newtheorem{rem}[thm]{Remark}

\makeatletter
\let\c@equation\c@thm
\makeatother
\numberwithin{equation}{section}
\makeatletter
\let\c@figure\c@thm
\makeatother
\numberwithin{figure}{section}

\bibliographystyle{plain}

\newcommand{\C}{\mathbb{C}}
\newcommand{\Q}{\mathbb{Q}}
\newcommand{\Pbb}{\mathbb{P}}

\newcommand{\Z}{\mathbb{Z}}
\newcommand{\N}{\mathbb{N}}
\newcommand{\Kbb}{\Bbbk}
\newcommand{\f}{\textbf}
\newcommand{\mr}{\mathrm}
\newcommand{\mb}{\mathbf}
\newcommand{\bdim}{\mathbf{dim}\,}
\newcommand{\fa}{ \ \mathrm{for}\ \mathrm{all}\ }

\title{GKM-Theory for Torus Actions on Cyclic Quiver Grassmannians}
\author{Martina Lanini}
\author{Alexander P\"{u}tz}
\address{Dipartimento di Matematica, Universit\`a di Roma ``Tor Vergata'',  Via della Ricerca Scientifica 1, I-00133 Rome, Italy}
\email{lanini@mat.uniroma2.it}
\email{puetz@mat.uniroma2.it}

\begin{document}


\begin{abstract}
We define and investigate algebraic torus actions on quiver Grassmannians for nilpotent representations of the equioriented cycle. Examples of such varieties are type $\tt A$ flag varieties, their linear degenerations, finite dimensional approximations of both the affine flag variety and affine Grassmannian for $\mr{GL}_n$. We show that these quiver Grassmannians equipped with our specific torus action are GKM-varieties and that their moment graph admits a combinatorial description in terms of the coefficient quiver of the underlying quiver representations. By adapting to our setting results by Gonzales, we are able to prove that moment graph techniques can be applied to construct module bases for the equivariant cohomology of the quiver Grassmannians listed above.
\end{abstract}

\maketitle
\footnote{AMS Subject Classifications: Primary 16G20; Secondary 14L30.}

\section*{Introduction}
\pagenumbering{arabic}
GKM-theory is named after the seminal paper \cite{GKM1998} by Goresky, Kottwitz and MacPherson, where the authors establish several localisation results in the derived category setting.
 In the present article, we do not make use of the full strength of \cite{GKM1998}, as we only deal with equivariant cohomology.

Let $X$ be a complex projective algebraic variety equipped with an action of an algebraic torus $T$. For instance, consider the projective plane $X=\mathbb{P}^2(\mathbb{C})$ equipped with the following action of $T=\mathbb{C}^*\times \mathbb{C}^*\times \mathbb{C}^*$: 
\[
(\gamma_1, \gamma_2, \gamma_3)\cdot [x_1:x_2:x_3]=[\gamma_1 x_1:\gamma_2 x_2: \gamma_3 x_3],
\]
for  $[x_1:x_2:x_3]\in X$ and $(\gamma_1, \gamma_2, \gamma_3)\in T$. 

GKM-theory aims to identify the equivariant cohomology ring with the image of the pullback $H_T^\bullet(X)\rightarrow H_T^\bullet(X^T)$, and to describe this image in terms of the corresponding moment graph. This is the one-skeleton of the $T$-action on $X$ (that is the set of fixed points and one-dimensional orbits) plus some extra information coming for the torus action on the one-dimensional orbits.  In the case of the projective plane equipped with the 3-dimensional torus action above, the $T$-fixed points are
\[
P_1=[1:0:0], \quad P_2=[0:1:0], \quad P_3=[0:0:1].
\]
and there are three one-dimensional $T$-orbits, say $O_i$ for $i=1,2,3$, each given by the vanishing of the $i$-th coordinate, and each containing in its closure the pair of fixed points $P_j, P_k$ with $i\neq j,k$. The one-skeleton of this torus action is hence a triangle. This is not the desired moment graph yet, since we need to keep track of the torus action on the one-dimensional orbits. For the moment, let us say that this is equivalent to put on any edge a degree one homogeneous polynomial from $S=\mathbb{Q}[\epsilon_1,\epsilon_2,\epsilon_3]$ by following a specific recipe (see \S\ref{subsec:EqvtEuler}). In fact, it will be useful to equip the above graph with an orientation, but for now we can ignore this. All in all, the (unoriented) moment graph of our example is
\begin{center}
\begin{tikzpicture}[scale=.8]

    \draw[shorten >=9, shorten <=9] (2,2) -- (2,-2);
    \draw[shorten >=9, shorten <=9] (-1.464,0) -- (2,-2);
    \draw[shorten >=9, shorten <=9] (-1.464,0) -- (2,2);
	
	\node at (-1.464,0) {$1$};
	\node at (2,2) {$3$};
    \node at (2,-2) {$2$};

    \node[rotate=30] at (-0.1,1.2) {$\epsilon_1-\epsilon_3$};
    
    \node[rotate=-30] at (-0.1,-1.2) {$\epsilon_1-\epsilon_2$};
    
        \node at (2.75,0) {$\epsilon_2-\epsilon_3$};
\end{tikzpicture}
\end{center}
Once the above moment graph is obtained, GKM-theory reduces the determination of the equivariant cohomology to a problem of commutative algebra. In our example, $H_T^\bullet(X)$ can be identified with the following module over $S$:
\[
\{(f_1, f_2,f_3)\in S\oplus S\oplus S\mid f_i-f_j\equiv 0\mod \epsilon_i-\epsilon_j\},
\]
which can be read off from the graph: an element of $H_T^\bullet(X)$ can be realised as a tuple of polynomials, one for any vertex of the moment graph, chosen in such a way that if two vertices are related by an edge, then the corresponding polynomials have to agree modulo the label of such an edge. Observe that the module we have described is free over $S$, and that the following is an $S$-basis
\[
(1,1,1), \quad (0,\epsilon_1-\epsilon_2, \epsilon_1-\epsilon_3), \quad \left(0,0,(\epsilon_1-\epsilon_3)(\epsilon_2-\epsilon_3)\right).
\]

Goresky, Kottwitz and MacPherson studied a big class of varieties acted upon by a torus whose equivariant cohomology can be read off from the corresponding moment graph as in our example. We refer to them as GKM-variety (see Definition~\ref{dfn:GKMvariety}). Examples of GKM-varieties are flag varieties and their Schubert varieties (see, for example, \cite{Carrell02}), as well as rationally smooth standard embeddings of reductive groups \cite{Gonzales2011}.

The aim of this paper is to apply GKM-theory to certain varieties coming from quiver representation theory.

A quiver $Q$ is a finite oriented graph, for instance  \begin{center}
 \begin{tikzpicture}[scale=.6]

    \node at (-2.3,0) {$\Delta_3 =$};

    \draw[arrows={-angle 90},shorten >=9, shorten <=9] (2,2) -- (2,-2);
    \draw[arrows={-angle 90},shorten >=9, shorten <=9] (-1.464,0) -- (2,2);
    \draw[arrows={-angle 90},shorten >=9, shorten <=9] (2,-2) -- (-1.464,0);

	\node at (-1.464,0) {$1$};
	\node at (2,2) {$2$};
    \node at (2,-2) {$3$};
    
\end{tikzpicture}.   
\end{center}
We refer to this as the equioriented cycle of length three.
A representation $M$ of a quiver $Q$ is a configuration of finite dimensional vector spaces $M^{(i)}$ (one for each vertex) and linear maps $M_{i\to j}:M^{(i)}\rightarrow M^{(j)}$ among them (one for each arrow). For example
\begin{center}
 \begin{tikzpicture}[scale=.8]
    \node at (-3.164,0) {$M:=$};

    \draw[arrows={-angle 90},shorten >=8, shorten <=8] (2,2) -- (2,-2);
    \draw[arrows={-angle 90},shorten >=13, shorten <=13] (-1.464,0) -- (2,2);
    \draw[arrows={-angle 90},shorten >=13, shorten <=13] (2,-2) -- (-1.464,0);

	\node at (-1.464,0) {$M^{(1)}=\mathbb{C}^3$};
	\node at (2,2) {$M^{(2)}=\mathbb{C}^3$};
    \node at (2,-2) {$M^{(3)}=\mathbb{C}^3$};
    
      \node[rotate=30] at (-0.1,1.2) {$M_{1\to 2}$};
    
    \node[rotate=-30] at (-0.1,-1.2) {$M_{3\to 1}$};
    
        \node at (2.75,0) {$M_{2\to 3}$};
    
\end{tikzpicture}
\end{center}
where for the standard basis of $\mathbb{C}^3$ the linear maps have the following matrix presentation
\[ M_{1\to 2}=M_{2\to 3}=M_{3\to 1}=\begin{pmatrix} 0 &  0 & 0\\
 1 & 0 &  0\\
 0 & 1 &  0\\ \end{pmatrix}.\]
Note that if we keep composing the linear maps of the above representation following the orientation of the edges we always end up with the zero homomorphism:
\[M_{3\to1}\circ M_{2\to3}\circ M_{1\to2} = M_{1\to2}\circ M_{3\to1}\circ M_{2\to3} =  M_{2\to3}\circ M_{1\to2}\circ M_{3\to1} = 0.\]
This is an example of a nilpotent representation. 
 For a collection ${\bf e}$ of nonnegative integers $e_i\leq\dim M^{(i)}$, the quiver Grassmannian $\mr{Gr}_{\mb{e}}(M)$ is the variety of configurations of vector spaces $U^{(i)}$ of dimensions prescribed by ${\mb e}$ which are compatible with the maps $M_{i\rightarrow j}$. 
For the collection $\mb{e}=(1,1,1)$ and $M$ as above the quiver Grassmannian $\mr{Gr}_{\mb{e}}(M)$ contains the point $U$ with 
\[ U^{(1)} := \mr{span}(e_1), \quad  U^{(2)} := \mr{span}(e_2), \quad U^{(3)} := \mr{span}(e_3),\]
 where $\{e_1,e_2,e_3\}$ denotes the standard basis of $\mathbb{C}^3$.
Note that if the quiver is just the graph with one vertex and no arrows, a representation is just a finite dimensional vector space, and quiver Grassmannians are classical Grassmann varieties. 

In this work we will focus on the special case in which $Q$ is the equioriented cycle of length $n$ and $M$ is a nilpotent representation.
Our primary goal is to equip these class of quiver Grassmannians with a torus action which provides them with a GKM-variety structure.

This is not the first time that GKM-theory meets representation theory of quivers: in \cite{CFR2013} the moment graph of a torus action on a quiver Grassmannian for a very special representation of the equioriented quiver of type $\tt A$ is described (see \S\ref{subsec:FeiginDegeneration} of this paper for more details). In \cite{Weist2013} a torus action on quiver moduli is introduced with localisation results in mind. Observe that quiver Grassmannians for a fixed quiver are quiver moduli for the one point extension of the same quiver. Both articles work with one explicit torus depending on the representation. The results of \cite{CFR2013} are limited to this special torus whereas the results from \cite{Weist2013} can be generalised as described in \cite[Remark~3.2]{BoFr2020}. Also, the action as introduced in \cite{Weist2013} has been applied recently, for example, in \cite{Franzen2019, BoFr2020}.  

Unluckily, Weist's torus action does not equip the corresponding quiver moduli with the structure of a GKM-variety in general (some of the obstructions are explained in the introduction of \cite{Franzen2019}). A known class where this works requires strong restrictions, among them acyclicity of the quiver. Our torus action, instead, turns every quiver Grassmannian for a nilpotent representation of the equioriented cycle into a GKM-variety, with no further restrictions.

We hope that this paper will motivate both, the reader familiar with GKM-theory, as well as the reader familiar with quiver representations, to (further) apply moment graph techniques to quiver Grassmannians.

In order to reach both communities, we have decided to spend some time recalling the basics of both theories. To help the reader navigate the paper, we now describe the content of the various sections.

In Section~\ref{sec:TorusActionCellsGKMTheory}, we deal with varieties equipped with a torus action, and describe the properties we want them to satisfy (equivariant formality, $T$-skeletality, BB-filterability). We also state the GKM-version of the Localisation Theorem for
equivariant cohomology (Theorem~\ref{thm:GKM}).

The primary goal of Section~\ref{sec:ModuleBases} is to produce a cohomology module basis (under GKM-localisation). We adapt Gonzales' work \cite{Gonzales2014} and show that the normality assumption in his article can be dropped if the variety is BB-filterable. This is needed since quiver Grassmannians are not normal in general. The main result of this section is Theorem~\ref{trm:cohomology-generators-general-setting}, which provides existence and uniqueness of an equivariant basis with certain suitable properties. The basis we propose generalises the equivariant Schubert cycle basis for the cohomology of the flag variety. Following Gonzales' recipe, the definition of the basis relies on the concept of local indices and equivariant Euler classes (see \S\ref{subsec:EqvtEuler}).

In Section~\ref{sec:GenQuiverRep}, we provide some background material on quiver representations and quiver Grassmannians. In particular, we recall the definition of the coefficient quiver of a quiver representation (Definition~\ref{def:CoeffQuiver}), a combinatorial gadget encoding all information about the given representation and one particular chosen basis for the representation. This object will play a central role in the rest of paper.

From Section~\ref{sec:quiver-grass-for-cycle} on, we restrict our attention to the equioriented cycle with $n$ vertices (denoted by $\Delta_n$). We show that in this case, any nilpotent representation admits a basis, whose corresponding coefficient quiver behaves in a particularly convenient way (see \S\ref{sec:nilpot-rep-cycle}). 

In Section~\ref{sec:TorusAction}, we use this good combinatorial behaviour to define torus actions on quiver Grassmannians for nilpotent representations of $\Delta_n$. 
We start by defining a $\C^*$-action, which induces a cellular decomposition of the variety
(Theorem~\ref{trm:cell_decomp-approx-lin-deg-aff-flag}). Then we define an action of a larger-rank torus $T$ (see \S\ref{sec:act-bigger-T}) and show that the previously-defined $\C^*$-action corresponds to a generic cocharacter of the larger torus. We conclude the section by showing that the quiver Grassmannian equipped with the $T$-action is a BB-filterable variety (Corollary~\ref{cor:BBfiltrDeltan}).

Finally, we describe the moment graph for the $T$-action on the quiver Grassmannian in Section~\ref{sec:GKM-VarietyStructure}.
More precisely, we show that this oriented graph with labelled edges has a combinatorial description: the vertices of the graph are given by successor closed subquivers (see Definition~\ref{dfn:successorClosed}) of the coefficient quiver and the edges by fundamental mutations (Definition~\ref{defn:FundMutation}). The precise statement, which also explains how to label the edges of the graph via torus characters, is Theorem~\ref{trm:comb-moment-graph}.

Section~\ref{sec:SpecialCases} deals with some special cases. We start by focusing on quiver Grassmannians for the equioriented type $\tt A_n$ Dynkin quiver. Our results apply since any of its representations can be trivially extended to a nilpotent representation of $\Delta_n$. We hence show that in the case of the variety of complete flags and Feigin's degeneration of it, Theorem~\ref{trm:comb-moment-graph} allows us to recover known moment graphs: the Bruhat graph and the graph described in \cite{CFR2013} respectively. 
Our constructions also applies to certain finite dimensional approximations of the affine flag variety and affine Grassmannian for $\mr{GL}_n$ as defined in \cite{Pue2020} (see Lemma~\ref{lma:approx-are-GKM}). For one example of such degenerations, we draw its moment graph, determine the module basis from Theorem~\ref{trm:cohomology-generators-general-setting}, and describe the ring structure of the equivariant cohomology.

In Appendix~\ref{app:Desing}, we explain how to construct equivariant resolutions of singularities in the explicit example from Section~\ref{sec:SpecialCases}. This allows to compute the equivariant Euler classes of $T$-varieties (at singular points).
\\[4pt]

\subsection*{Acknowledgements} We would like to thank Richard Gonzales for helpful correspondence. We acknowledge the PRIN2017 CUP E8419000480006, and the MIUR Excellence Department Project awarded to the Department of Mathematics, University of Rome Tor Vergata, CUP E83C18000100006.
\section{Torus Actions, Cellular Decompositions and GKM-Theory}\label{sec:TorusActionCellsGKMTheory}
\subsection{GKM-Varieties} Throughout this section, $X$ will denote a complex projective algebraic variety. We say that $X$ is a $T$-variety if it is acted upon by an algebraic torus $T \cong (\C^*)^r$. If $X$ is a $T$-variety, we denote by $H_T^\bullet(X)$ the $T$-equivariant cohomology of $X$ with rational coefficients. 

We are interested in a class of $T$-varieties with a particularly nice $T$-action.
\begin{defi}\label{def:EquivariantlyFormal}
A $T$-variety $X$ is \f{equivariantly formal} if one of the following equivalent conditions is satisfied:
\begin{enumerate}
\item the Serre spectral sequence degenerates at $E_2$, 
\item the ordinary rational cohomology can be recovered by extension of scalars: 
\[ H^\bullet(X) \cong H_T^\bullet(X) \otimes_{H_T^\bullet(pt)}\Q, \]
\item $H_T^\bullet(X)$ is a free $H_T^\bullet(pt)$-module. 
\end{enumerate}
\end{defi}
Condition (1) of the above definition is discussed in details in 
\cite[Section XII]{Borel1960}. A proof that the other conditions are equivalent can be found in \cite[Theorem~1.6.2]{GKM1998} or \cite[Lemma~1.2]{Brion2000} where also the following lemma is proven.
\begin{lma}
\label{lma:equiformality}
$X$ is equivariantly formal if the rational cohomology of $X$ vanishes in odd degrees. Both conditions are equivalent if $X$ has finitely many $T$-fixed points. 
\end{lma}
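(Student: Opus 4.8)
The plan is to prove Lemma~\ref{lma:equiformality} by exploiting the Serre spectral sequence of the fibration $X \hookrightarrow X_T \to BT$, where $X_T = ET \times_T X$ is the Borel construction computing $H_T^\bullet(X)$. The base $BT$ has cohomology concentrated in even degrees (it is a polynomial ring on degree-$2$ generators), so the $E_2$-page is $E_2^{p,q} = H^p(BT) \otimes H^q(X)$. The first claim to establish is that vanishing of $H^{\mathrm{odd}}(X)$ forces equivariant formality. My strategy here is to observe that if $H^q(X) = 0$ for all odd $q$, then every nonzero entry $E_2^{p,q}$ has $p$ even and $q$ even, hence $p+q$ even. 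The differentials $d_r \colon E_r^{p,q} \to E_r^{p+r, q-r+1}$ shift total degree by $+1$, so they map an even-total-degree class to an odd-total-degree slot; since all nonzero entries live in even total degree, every differential must vanish, the spectral sequence degenerates at $E_2$, and condition~(1) of Definition~\ref{def:EquivariantlyFormal} holds. Thus $X$ is equivariantly formal.

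For the converse under the finiteness hypothesis, I would show that if $X$ has finitely many $T$-fixed points and is equivariantly formal, then $H^{\mathrm{odd}}(X) = 0$. The key input is that a $T$-variety with finitely many fixed points (together with the $T$-action and the associated Bia\l ynicki-Birula / cellular structure one expects in this setting) admits a cohomology basis indexed by the fixed points, each contributing a class in even degree; more precisely the existence of finitely many fixed points, in the situations considered here, yields a decomposition whose strata are even-dimensional affine cells. Concretely, I would argue that equivariant formality gives $H^\bullet(X) \cong H_T^\bullet(X) \otimes_{H_T^\bullet(pt)} \Q$ by condition~(2), and that the localisation-type count of fixed points pins down the total Betti number; combined with the cell structure forcing all cohomology into even degrees, one concludes $H^{\mathrm{odd}}(X)=0$.

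The main obstacle I anticipate is the converse direction: merely having finitely many fixed points does not by itself produce an even cell decomposition without invoking additional structure (such as a suitable one-parameter subgroup giving a Bia\l ynicki-Birula decomposition, or the $T$-skeletality and BB-filterability properties the paper is about to introduce). The honest difficulty is therefore to identify exactly what ``finitely many $T$-fixed points'' is being leveraged to guarantee that the odd cohomology vanishes — in the smooth projective case this is classical via Morse theory on the moment map, but in the general (possibly singular) setting it requires the weight-space/parity argument on the spectral sequence together with a dimension count. I expect the cleanest route is to cite the equivalence already recorded in the literature (the statement attributes this very lemma to \cite{GKM1998} and \cite{Brion2000}), and to supply the spectral sequence parity argument for the forward implication as the genuinely self-contained part, deferring the converse to the fixed-point counting that the finiteness hypothesis makes available.
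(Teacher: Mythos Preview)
The paper does not prove this lemma; it simply attributes it to \cite[Theorem~1.6.2]{GKM1998} and \cite[Lemma~1.2]{Brion2000}. So there is no ``paper's own proof'' to compare against beyond that citation. Your forward implication via the Serre spectral sequence parity argument is correct and is exactly the standard proof.

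Your converse argument, however, has a genuine gap that you yourself flag. You try to extract an even-dimensional cell decomposition from the finiteness of $X^T$, but no such decomposition is available in general for possibly singular $X$, and none is needed. The clean argument (this is essentially Brion's) runs as follows: if $X$ is equivariantly formal, the restriction map $H_T^\bullet(X)\to H_T^\bullet(X^T)$ is injective (free modules over $S$ have no $S$-torsion, and the kernel of restriction to the fixed locus is torsion). When $X^T$ is finite, $H_T^\bullet(X^T)\cong\bigoplus_{x\in X^T} S$ is concentrated in even degrees, so $H_T^\bullet(X)$ is too. Then condition~(2) of Definition~\ref{def:EquivariantlyFormal} gives $H^\bullet(X)\cong H_T^\bullet(X)\otimes_S\Q$, which is therefore also concentrated in even degrees. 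No BB-decomposition, smoothness, or skeletality is required.
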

Since the variety $X$ is equivariantly formal with respect to the $T$-action, we will often denote an equivariantly formal variety by $(X,T)$.
In order to apply localisation techniques, we require more than equivariant formality. 
\begin{defi} We say that the $T$-action on $X$ is
\begin{enumerate}
\item  \f{skeletal} if the number of $T$-fixed points and one-dimensional $T$-orbits in $X$ is finite; 
\item \f{locally linearisable} if for each one-dimensional orbit $E$ in $X$ there is a linear action of $T$ on $\C\Pbb^1$ and a $T$-equivariant isomorphism $h : \overline{E} \to \C\Pbb^1$. 
\end{enumerate}
\end{defi}  
\begin{defi}\label{dfn:GKMvariety}
 We say that $X$, or $(X,T)$, is a \f{GKM-variety} if it is equivariantly formal and the $T$-action is skeletal.
 \end{defi}
  \begin{rem}\label{rem:1dimlOrbits}Recall that for us $X$ is always a projective variety. Then, by \cite[(1.2)]{GKM1998}), the $T$-action is locally linearisable, as soon as $(X,T)$ is a GKM-variety. 
\end{rem}

\begin{rem}
Our definition of GKM-variety differs from the definition by Gonzales \cite[Definition~1.4.13]{Gonzales2011}, as we do not assume normality. This is central for us, since the varieties we want to deal with fail to be normal in general \cite[Theorem~13]{CFFFR2017}. 
By \cite[Corollary~2]{Sumihiro1974}, the $T$-action on normal varieties is locally linearisable. 
\end{rem}
The above definition of GKM-variety is based on the assumptions by Goresky, Kottwitz and MacPherson \cite[\S~7.1]{GKM1998}. 
\begin{ex}Examples of GKM-varieties are (finite dimensional) Schubert varieties (of flag varieties for a Kac-Moody group) \cite{Carrell02}, toric varieties \cite{Brion1997}, rationally smooth embeddings of reductive groups \cite{Gonzales2011}.
\end{ex}

\subsection{BB-filterable Varieties} Assume that $X$ is equipped with a $\C^*$-action, and denote by $X_1, \ldots, X_m$ the connected components of the fixed point set of $X$, which we denote by $X^{\C^*}$.
This induces a decomposition
\begin{equation}\label{eqn:BBdecomposition}
X=\bigcup_{i\in [m]} W_i, \quad\hbox{ with }\quad   W_i := \left\{ x \in X \mid \lim_{z \to 0} z.x \in X_i \right\},
\end{equation}
for $[m] := \{1,\dots,m\}$.  We call this a \f{BB-decomposition} since decompositions of this type were first studied by Bialynicki-Birula in \cite{Birula1973}. 
\begin{defi}We say that $W_i$ from \eqref{eqn:BBdecomposition} is a \f{rational cell} if it is rationally smooth at all $w\in W_i$. This in turn holds if 
\[H^{2\textrm{dim}_{\C}(W_i)}(W_i, W_i\setminus \{w\})\simeq \Q  \quad\hbox{ and  }\quad  H^m(W_i,W_i\setminus\{w\})=0 \]
for any $m\neq 2 \textrm{dim}_{\C}(W_i)$ (cf. \cite[p.292, Definition~3.4]{Gonzales2014}). 
\end{defi}
\begin{rem}
These $W_i$ are called attractive sets and are isomorphic to affine spaces in the original BB-decomposition. Requiring the attractive sets to be affine spaces is a strong restriction, so that usually
 the BB-decomposition does not have to be a cellular decomposition. Nevertheless, the notion of rational cells provides a reasonable replacement of this condition for the study of topological properties in the case of singular varieties (see \cite{Gonzales2014}).
\end{rem}
\begin{rem}
We will show in Theorem ~\ref{trm:cell_decomp-approx-lin-deg-aff-flag} that it is possible to obtain attractive sets which are in fact affine spaces for the class of varieties we are interested in. We decided nevertheless to deal with rational cells in this section, as the results we achieved are intended to 
\end{rem}
Let $\mathfrak{X}_*(T)$ be the cocharacter lattice of an algebraic torus $T$. If $X$ is a $T$-variety, then every $\chi\in \mathfrak{X}_*(T)$ determines a $\C^*$-action on $X$.

\begin{defi}A cocharacter $\chi$ is \f{generic} (for $T$ acting on $X$) if $X^{\chi(\C^*)}=X^T$.
\end{defi}
\begin{rem}\label{rem:genCochar}Recall that $X$ always denotes a complex projective variety. Under such an assumption, it is enough to have $|X^{\chi(\C^*)}|<\infty$ to conclude that the cocharacter $\chi$ is generic. Indeed, since $\chi(\C^*)$ is a subgroup of $T$, then $X^T\subseteq X^{\chi(\C^*)}$. It is a known fact that the Euler characteristic of $X$ agrees with the number of fixed points of any algebraic torus action on $X$, as soon as the latter number is finite. It follows that the two fixed point sets have the same cardinality and hence have to coincide.
\end{rem}
 
\begin{defi}
\label{def:BB-filterable}
A projective $T$-variety $X$ is \f{BB-filterable} if: 
\begin{enumerate}
\item[(BB1)] the fixed point set $X^T$ is finite,
\item[(BB2)] there exists a generic cocharacter $\chi: \C^* \rightarrow T$, i.e. $X^{\chi(\C^*)} = X^T$, such that 
 the associated BB-decomposition consists of rational cells. 
\end{enumerate}
\end{defi}
The above definition is very much inspired by Gonzales' definition of $\Q$-filterable variety (see \cite[Definition 4.6]{Gonzales2014}). Here we relax the assumptions in \cite{Gonzales2014} and do not require that $X$ is normal. The following theorem extends \cite[Theorem~4.7]{Gonzales2014} to the class of BB-filterable varieties. Its proof is based on Gonzales' idea but has to be adapted to the setting of BB-filterable varieties.

\begin{trm}
\label{trm:t-stable_filtration-general-setting} Let $X$ be a BB-filterable projective $T$-variety. Then: 
\begin{enumerate}
\item $X$ admits a filtration into $T$-stable closed subvarieties $Z_i$ such that
\[ \emptyset = Z_0 \subset Z_1 \subset \dots \subset Z_{m-1} \subset Z_m = X. \]
\item Each $W_i = Z_i \setminus Z_{i-1}$ is a rational cell, for all $i \in [m]$.
\item The singular rational cohomology of $Z_i$ vanishes in odd degrees, for $i \in [m]$. In other words, each $Z_i$ is equivariantly formal.
\item If, additionally, the $T$-action on $X$ is skeletal, each $Z_i$ is a GKM-variety.
\end{enumerate}
\end{trm}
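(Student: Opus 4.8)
The plan is to follow Gonzales' proof of \cite[Theorem~4.7]{Gonzales2014} step by step, checking that normality is never genuinely used and can be replaced throughout by the completeness of $X$ together with the rational cell hypothesis (BB2). First I would construct the filtration in (1). Let $\chi$ be the generic cocharacter provided by (BB2), so that the induced $\C^*$-action has fixed locus $X^{\chi(\C^*)}=X^T=\{p_1,\dots,p_m\}$, finite by (BB1), and let $W_1,\dots,W_m$ be the attracting sets of \eqref{eqn:BBdecomposition}. As $X$ is projective, hence complete, the limit $\lim_{z\to 0}z.x$ exists for every $x$, so $X=\bigcup_i W_i$. Bialynicki-Birula theory \cite{Birula1973} equips the index set with a partial order (declaring $p_i\preceq p_j$ whenever $\overline{W_j}$ meets $W_i$); I would fix a total order refining it and set $Z_i:=\bigcup_{j\le i}W_j$. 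Each $Z_i$ is $T$-stable, since the $W_j$ are $T$-stable and $T$ is connected, and the point of this step is that each $Z_i$ is closed, because the boundary $\overline{W_j}\setminus W_j$ consists only of cells $W_k$ with $k$ strictly below $j$. Part (2) is then immediate: by construction $W_i=Z_i\setminus Z_{i-1}$ is the attracting cell, which is a rational cell by (BB2).

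For (3) I would induct on $i$ using the long exact sequence of the pair $(Z_i,Z_{i-1})$ in singular rational cohomology. Since $Z_i$ is compact and $Z_{i-1}$ is closed with complement $W_i$, the relative cohomology $H^\bullet(Z_i,Z_{i-1})$ is isomorphic to the cohomology with compact supports $H^\bullet_c(W_i)$; as $W_i$ is a rational cell of complex dimension $d_i$, Poincar\'e--Lefschetz duality for rationally smooth spaces (see \cite{Gonzales2014}) concentrates this group in the single even degree $2d_i$. The base case $Z_1=W_1$ is rationally acyclic and so has no odd cohomology. In the inductive step the segment $H^k(Z_i,Z_{i-1})\to H^k(Z_i)\to H^k(Z_{i-1})$ has both outer terms vanishing for odd $k$ (the left one by the even concentration just noted, the right one by the inductive hypothesis), forcing $H^{\mr{odd}}(Z_i)=0$. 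Equivariant formality of $Z_i$ then follows from Lemma~\ref{lma:equiformality}.

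For (4), if the $T$-action on $X$ is skeletal then its restriction to the $T$-stable closed subvariety $Z_i$ is again skeletal, since the $T$-fixed points and one-dimensional $T$-orbits lying in $Z_i$ form subsets of those in $X$ and are therefore finite in number. Combining this with the equivariant formality obtained in (3), Definition~\ref{dfn:GKMvariety} shows that each $Z_i$ is a GKM-variety.

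The main obstacle is the closedness claim in the construction of the filtration, as this is the single place where Gonzales invokes normality: in the normal case one applies Sumihiro's equivariant embedding theorem \cite{Sumihiro1974} to linearise the action and to order the cells by the weights of $\chi$ on an ample line bundle. I expect to bypass this either by passing to the $T$-equivariant normalisation $\widetilde{X}\to X$ and transporting the ordering of fixed components along this finite surjection, or by appealing directly to the existence of a closed BB-filtration for an arbitrary complete variety with finite $\chi(\C^*)$-fixed locus. Everything downstream---the excision isomorphism, the cohomological vanishing, and the GKM conclusion---is topological and insensitive to normality, so once the filtration is available the remaining steps reproduce Gonzales' argument essentially verbatim.
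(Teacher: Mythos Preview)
Your proposal is correct and follows essentially the same route as the paper: establish a closed $T$-stable filtration from the BB-decomposition, then run the long exact sequence of the pair (this is precisely \cite[Lemma~4.4]{Gonzales2014}) to get odd-vanishing, and conclude via Lemma~\ref{lma:equiformality}. The one place where you hedge---the closedness of the $Z_i$ without normality---is resolved in the paper not by passing to the normalisation but by invoking \cite[Lemma~4.12]{Carrell02}, which provides the required total order and closed filtration for any complete variety with finite $\chi(\C^*)$-fixed locus, with no normality hypothesis; this is exactly the ``direct appeal'' you mention as your second option.
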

\begin{proof}
$X$ is BB-filterable which by Definition~\ref{def:BB-filterable} implies that the attractive loci of the BB-decomposition are rational cells. These cells are $T$-stable since $\C^*$ acts via some generic cocharacter $\chi\in \mathfrak{X}_*(T)$. By \cite[Lemma~4.12]{Carrell02} there exists a total order of the fixed points such that, if we define the subvarieties $Z_i$ inductively by removing the rational cell $W_{i+1}$, they are nested and closed in $X$.

We apply \cite[Lemma~4.4]{Gonzales2014} inductively to the filtered BB-decomposition and get that the $Z_i$ have no odd cohomology. Lemma~\ref{lma:equiformality} implies that they are equivariantly formal. 
Hence a $T$-skeletal action implies that the $Z_i$'s are GKM-varieties.
\end{proof}
\begin{rem} 
In particular, we obtain that the $Z_i$ are GKM-varieties, as soon as we have finitely many one-dimensional $T$-orbits. This suffices since $|X^T|<\infty$ holds by the definition of BB-filterable varieties. 
\end{rem}
\begin{rem} 
By Remark~\ref{rem:1dimlOrbits}, Theorem~\ref{trm:t-stable_filtration-general-setting} implies that the $T$-action on BB-filterable projective $T$-varieties is locally linearisable.
\end{rem}
\begin{rem}\label{rem:Zi} If $\{Z_0, Z_1, \ldots, Z_m\}$ and $\{W_1, \ldots, W_m\}$ are as in Theorem~\ref{trm:t-stable_filtration-general-setting}, then for any $i$ we have that $W_i$ is open in $Z_i$ and $Z_i\setminus W_i$ is a (closed) $T$-stable subvariety of $Z_i$.
\end{rem}

\subsection{Equivariant Localisation after Goresky-Kottwitz-MacPherson} \label{subsec:EqvtEuler}
The equivariant cohomology of a GKM-variety $(X, T)$ can be described by looking at the one-skeleton of the $T$-action. The idea of extracting all needed data from the zero- and one-dimensional $T$-orbits is actually due to Chang and Skjelbred \cite{CS74}, but such an approach is nowadays known as GKM-theory after the paper \cite{GKM1998}.

Functoriality of equivariant cohomology implies that there is a $\N$-graded algebra homomorphism
\[
\psi: H_T^\bullet(X)\rightarrow H_T^\bullet\Big(\cup_ {i=1}^m X_i\Big)\simeq\bigoplus_{i\in [m]}H_T^\bullet(X_i),
\]
where $X_1, \ldots, X_m$ are the connected components of the fixed point set as in \eqref{eqn:BBdecomposition}. In particular, if $X$ has a finite number of (isolated) $T$-fixed points, we can identify $H_T^\bullet(X^T)$ with $\bigoplus_{x\in X^T} H_T^\bullet(\textrm{pt)}$. From now on we use $S:=H_T^\bullet(\textrm{pt)}$ as shorthand notation. $S$ can be identified with the symmetric algebra of the $\Q$-vector space over the torus character lattice $\mathfrak{X}^*(T)\otimes_{\Z}\Q$. 

If the $T$-action on $X$ is locally linearisable, any one-dimensional orbit $E$ contains exactly two fixed points in its closure, say $x_E$ and $y_E$. Clearly, the torus acts on $E$ via a character (uniquely defined up to a sign, depending on the isomorphism $\overline{E}\simeq\mathbb{P}^1$). Since the sign choice does not play any role in the following theorem, we just pick a torus character, denoted by $\alpha_E$, for each one-dimensional orbit $E$.  

The above data concerning $T$-fixed points, one-dimensional orbits and their closure is encoded in an oriented graph whose edges are labelled by torus characters.
\begin{defi}\label{def:moment-graph}Let $(G,T)$ be a GKM-variety, and let $\chi\in \mathfrak{X}_*(T)$ be a generic cocharacter. The corresponding \f{moment graph} $\mathcal{G}=\mathcal{G}(X,T, \chi)$ of a GKM-variety is given by the following data:
\begin{itemize}
\item[(MG0)] the $T$-fixed points as vertices, i.e.: $\mathcal{G}_0 = X^T$,
\item[(MG1)] the closures of one-dimensional $T$-orbits $\overline{E} = E \cup \{x,y\}$ as edges in $\mathcal{G}_1$, oriented from $x$ to $y$ if $\lim_{\lambda \to 0}\chi(\lambda).p=x$ for $p \in E$,
\item[(MG2)] every $\overline{E}$ is labelled by a character $\alpha_E \in \mathfrak{X}^*(T)$ describing the $T$-action on $E$. 
\end{itemize}
\end{defi}
If the choice of the cocharacter $\chi$ is clear from the context or the orientation is not relevant, we sometimes drop it from the notation for the moment graph.

\begin{trm}\label{thm:GKM}(\cite[Theorem~1.2.2]{GKM1998}) Let $(X, T)$ be a GKM-variety. Then $\psi$ is injective and its image is
\[\textrm{Im}(\psi)=\left\{(f_x)\in\bigoplus_{x\in \mathcal{G}(X,T)_0}S \ \Big| \
\begin{array}{c}
f_{x_E}-f_{y_E}\in \alpha_{E}S\\
 \hbox{ for any }\overline{E}=E\cup\{x_E, y_E\}\in\mathcal{G}(X,T)_1\end{array}
\right\}.\]  
\end{trm}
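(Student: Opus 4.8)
The plan is to follow the classical two-step strategy: first establish injectivity of $\psi$ via the localisation theorem, and then pin down its image by the Chang--Skjelbred reduction to the one-skeleton.

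For injectivity I would invoke equivariant formality. By Definition~\ref{def:EquivariantlyFormal}(3), $H_T^\bullet(X)$ is a free $S$-module, hence torsion-free. On the other hand, the Atiyah--Bott--Quillen localisation theorem guarantees that the kernel and cokernel of $\psi$ are $S$-torsion modules, being supported on the finitely many characters arising as isotropy weights (finitely many since the $T$-action is skeletal). A torsion-free module whose map has torsion kernel must have vanishing kernel, so $\psi$ is injective.

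For the image, the heart of the matter is the Chang--Skjelbred lemma \cite{CS74}. Let $X_1 \subseteq X$ be the one-skeleton, that is the union of $X^T$ with all one-dimensional orbits; by $T$-skeletality this is a closed $T$-stable subvariety with finitely many irreducible components. Equivariant formality implies that the image of $\psi$ coincides with the image of the restriction $H_T^\bullet(X_1)\to H_T^\bullet(X^T)$, so it suffices to compute the latter. Each component of $X_1$ is a one-dimensional orbit closure $\overline{E}$, which by Remark~\ref{rem:1dimlOrbits} is $T$-equivariantly isomorphic to $\Pbb^1$ carrying a linear $T$-action with fixed points $x_E,y_E$ and weight $\alpha_E$ on the orbit $E$. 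A direct computation of $H_T^\bullet(\Pbb^1)$ shows that its image in $H_T^\bullet(\{x_E,y_E\}) = S\oplus S$ is exactly $\{(f,g) : f-g \in \alpha_E S\}$. Since distinct components of $X_1$ meet only in $T$-fixed points, a Mayer--Vietoris argument identifies $H_T^\bullet(X_1)$ with the fibre product of the component cohomologies over $H_T^\bullet(X^T)$; its image is therefore the intersection of the single-edge conditions, which is precisely the congruence system stated in the theorem.

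The main obstacle will be the Chang--Skjelbred reduction: it rests on the vanishing of certain codimension-one local cohomology groups, which uses equivariant formality in an essential way. Making this rigorous without the normality hypothesis that the authors deliberately drop requires checking that the relevant support and torsion arguments depend only on the finiteness of $X^T$ and of the one-dimensional orbits, both of which are guaranteed by the GKM-variety assumption. Once this reduction is in place, the $\Pbb^1$-computation and the gluing via Mayer--Vietoris are routine.
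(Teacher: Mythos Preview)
The paper does not supply its own proof of this theorem: it is stated with the citation \cite{GKM1998} and no argument is given, since this is the classical GKM localisation theorem being quoted as background. Your outline (Atiyah--Bott--Quillen localisation for injectivity, Chang--Skjelbred reduction to the one-skeleton, then the $\mathbb{P}^1$-computation and Mayer--Vietoris gluing) is the standard route to this result and is essentially what one finds in \cite{GKM1998} and the surrounding literature, so there is nothing to compare against here.
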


\begin{rem}Since the appearance of \cite{GKM1998}, moment graph techniques have been extensively --and successfully-- applied to the study of equivariant cohomology of Schubert varieties (in Kac-Moody flag varieties) \cite{Carrell02}, Hessenberg varieties, standard group embeddings, and more. For more examples see the excellent survey \cite{Tymoczko2005}.  The aim of our paper is to further expand the class of varieties whose equivariant cohomology ring can be investigated by looking at their moment graphs.
\end{rem}

\section{Construction of Cohomology Module Bases}\label{sec:ModuleBases} By definition, the equivariant cohomology of an equivariantly formal space $X$ is a free module over $S$. It is hence  natural to look for an $S$-basis of $H_T^\bullet(X)$. 
In this section, we address this question in the generality of GKM-varieties.

\subsection{Equivariant Euler classes}
To construct our basis, we will use the same recipe as Gonzales in \cite{Gonzales2014}, and hence need equivariant Euler classes and local indices. 
For a $T$-variety $Y$ and a fixed point $y\in Y^T$, we denote by $\mr{Eu}_T(y, Y)$ the equivariant Euler class of $y$ in $Y$. This is an element of the fraction field $Q$ of $S$, whose inverse (up to a sign) is obtained by localising the fundamental class in Borel-Moore homology.  We refer the reader to \cite[Section~2.2.1]{Arabia98} for the precise definition, and limit ourselves to three properties, which very often are enough to determine the equivariant Euler classes. 
 
 \begin{lem}\label{lma:euler-class-along-resolution}(cf. \cite[Corollary~15, Lemma~16, Theorem~18]{Brion1997}) Let $Y$ be a $T$-variety and $y\in Y^T$. 
 \begin{enumerate}
 \item If $Y$ is smooth at $y$ then $\mr{Eu}_T(y,Y)=(-1)^{\mr{dim}(Y)} \mr{det} \, T_yY$, where $\mr{det} \, T_yY$ is the product of the characters by which $T$ acts on the tangent space $T_yY$.
 \item If $Y$ is rationally smooth at $y$ then $\mr{Eu}_T(y,Y)=z\cdot \mr{det} \, T_yY$, for some $z\in\Q\setminus\{0\}$. 
\item If $\pi: Y\to X$ is a $T$-equivariant resolution of singularities and $|Y^T|<\infty$, then
\[ \mr{Eu}_T(x,X)^{-1} = \sum_{ y \in Y^T, \pi(y)=x} \mr{Eu}_T(y,Y)^{-1}.\]
 \end{enumerate}
 \end{lem}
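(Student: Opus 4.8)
The plan is to derive all three statements from the single principle that the equivariant Euler class is, up to sign, the inverse of the localisation of the fundamental class in equivariant Borel--Moore homology. Concretely, for a $T$-variety $Y$ with finitely many fixed points, the localisation theorem gives an isomorphism $H^{BM,T}_\bullet(Y)\otimes_S Q\cong\bigoplus_{y\in Y^T}Q\cdot[y]$, under which the image of the fundamental class $[Y]$ reads $\sum_{y}\pm\,\mr{Eu}_T(y,Y)^{-1}[y]$. I would first set up this identification, following \cite{Arabia98}, and fix the sign conventions once and for all; every subsequent computation is then a local analysis of this image near a single fixed point.

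For part (1), I would use that a smooth fixed point admits a $T$-equivariant local model: by linearising the action near $y$ (averaging a local chart over the maximal compact subtorus, or invoking the equivariant slice theorem), a neighbourhood of $y$ is $T$-equivariantly identified with a neighbourhood of the origin in the $T$-module $T_yY$. Since the Euler class is local and the fundamental class of a $T$-module localises at the origin to the inverse product of its weights, one obtains $\mr{Eu}_T(y,Y)=\prod_i\chi_i=\mr{det}\,T_yY$ up to the homological sign, which contributes exactly $(-1)^{\mr{dim}(Y)}$ because $[Y]$ sits in Borel--Moore degree $2\,\mr{dim}(Y)$.

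For part (3), the key input is the projection formula in equivariant Borel--Moore homology: a proper $T$-equivariant resolution $\pi\colon Y\to X$ is generically one-to-one, so $\pi_\ast[Y]=[X]$. Localising this identity at a fixed point $x\in X^T$ and using that $\pi$ restricts to a map of fixed-point sets, the component of $\pi_\ast[Y]$ at $x$ is the sum of the components of $[Y]$ over the fibre $\pi^{-1}(x)\cap Y^T$; comparing with the two localised expressions yields $\mr{Eu}_T(x,X)^{-1}=\sum_{y\in Y^T,\ \pi(y)=x}\mr{Eu}_T(y,Y)^{-1}$ (the signs cancel since $\mr{dim}(Y)=\mr{dim}(X)$). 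Part (2) then follows by combining (1) and (3): choosing a $T$-equivariant resolution $\pi\colon \widetilde{Y}\to Y$ and summing the smooth contributions over the fibre expresses $\mr{Eu}_T(y,Y)$ as a homogeneous element of degree $\mr{dim}(Y)$; rational smoothness guarantees, via a vanishing argument as in \cite[Lemma~4.4]{Gonzales2014}, that this class is supported in the expected degree and is a nonzero rational multiple of $\mr{det}\,T_yY$, giving the scalar $z\in\Q\setminus\{0\}$.

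The main obstacle will be part (2): at a rationally smooth but singular point the Zariski tangent space may strictly contain a space of dimension $\mr{dim}(Y)$, so one must argue that the excess tangent directions do not contribute to the leading term and that the localisation of the fundamental class remains a nonzero scalar multiple of the product of the correct tangent weights. This requires a careful comparison of the equivariant Borel--Moore homology of $Y$ near $y$ with that of its tangent cone, for which rational smoothness is precisely the hypothesis that makes the rational homology behave as in the smooth case.
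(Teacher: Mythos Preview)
The paper does not supply its own proof of this lemma; it is quoted from Brion (with the remark immediately following it explaining that Brion phrases things in terms of equivariant multiplicities, which are inverse to Euler classes up to sign). So there is no argument in the paper to compare against beyond the citation.

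Your outline for parts (1) and (3) is the standard one and is essentially what Brion does: linearise at a smooth fixed point to read off the tangent weights, and push forward the fundamental class along a proper birational $T$-map and localise. These are fine.

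Your route to part (2), however, has a genuine gap. Combining (1) and (3) via a resolution tells you only that $\mr{Eu}_T(y,Y)^{-1}$ is a homogeneous rational function of degree $-\dim(Y)$; it says nothing about divisibility by the specific characters appearing in $T_yY$. Worse, at a rationally smooth point that is actually singular one has $\dim T_yY>\dim Y$, so $\det T_yY$ has degree strictly larger than $\dim Y$, and no nonzero multiple of it can equal the degree-$\dim(Y)$ element you have produced. The resolution strategy cannot close this; rational smoothness must be used directly. Brion's argument (and Gonzales' version cited later in the paper as \cite[Corollary~5.6]{Gonzales2014}) instead identifies the relevant weights as those of the finitely many one-dimensional $T$-orbits through $y$, using that rational smoothness forces the local equivariant cohomology $H_T^\bullet(Y,Y\setminus\{y\})$ to be free of rank one in the correct degree, generated by the product of exactly those $\dim(Y)$ characters. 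That is the missing ingredient, and it is how the lemma is actually applied later in the paper (see the proof of Lemma~\ref{lem:tau}).
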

 
  \begin{rem}Actually, Brion in \cite{Brion1997} studies equivariant multiplicities rather than Euler classes; they are inverse to each other (up to some sign which has been taken care of in the statement of Lemma~\ref{lma:euler-class-along-resolution}).
 \end{rem}
 
\begin{rem}
By using the properties in the previous lemma, Arabia in \cite[\S2.7(27)]{Arabia98} determines (the inverse) equivariant Euler classes of Schubert varieties by looking at Bott-Samelson resolutions. The above lemma also allows us to determine equivariant Euler classes, and hence the desired module basis for the equivariant cohomology, by constructing desingularisations of the quiver Grassmannians, we are looking at (see Appendix~\ref{app:Desing}).
\end{rem} 
 
In the following, thanks to Theorem~\ref{thm:GKM}, we identify $H^\bullet(X)$ with $\textrm{Im}(\psi)$, so that $f\in H_T^\bullet(X)$ will be given by a collection $(f_x)\in\bigoplus S$, satisfying the conditions given by the edge labels of the moment graph $\mathcal{G}{(X,T)}$. 
\begin{lem}\label{lem:tau}Let $(X,T)$ be a BB-filterable GKM-variety with filtration 
$$\emptyset=Z_0\subset Z_1\subset\ldots\subset Z_m=X$$ as in Theorem~\ref{trm:t-stable_filtration-general-setting}. Let $X^T=\{x_1, \ldots, x_m\}$ with $x_i\in W_i=Z_i\setminus Z_{i-1}$. For $i\in [m]$ define
\[
\tau^{(i)}_{x_j}:=\left\{\begin{array}{ll}
0&\hbox{ if }j\neq i\\
\mr{Eu}_T(x_i, Z_i)&\hbox{ if }j=i\\
\end{array}\right.\qquad (j \in [m]).
\]
Then, $\tau^{(i)}:=(\tau^{(i)}_{x_j})_{j\in [m]}\in H_T^\bullet(Z_i)$.
\end{lem}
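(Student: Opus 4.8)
The plan is to identify $H_T^\bullet(Z_i)$ with its image under the GKM-localisation map and then verify directly that the collection $\tau^{(i)}$ satisfies the edge relations of Theorem~\ref{thm:GKM}. First observe that $Z_i$ is itself a GKM-variety: since $X$ is a GKM-variety the $T$-action is skeletal, so Theorem~\ref{trm:t-stable_filtration-general-setting}(4) applies. Its fixed point set is $Z_i^T=\{x_1,\dots,x_i\}$, and for $j>i$ the prescribed entry $\tau^{(i)}_{x_j}=0$; thus $\tau^{(i)}$ is genuinely a tuple over $Z_i^T$, and the claim is that it lies in $\mathrm{Im}(\psi_i)\subseteq\bigoplus_{j=1}^{i} S$, where $\psi_i$ denotes the localisation map for $Z_i$.

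Before checking the edge relations I would confirm that the single nonzero entry $\mathrm{Eu}_T(x_i,Z_i)$ actually lies in $S$, and not merely in the fraction field $Q$. By Remark~\ref{rem:Zi}, $W_i$ is open in $Z_i$, and $W_i$ is a rational cell, hence rationally smooth at $x_i$; since rational smoothness is a local condition (via excision, as $W_i$ is open), $Z_i$ is rationally smooth at $x_i$. Lemma~\ref{lma:euler-class-along-resolution}(2) then gives $\mathrm{Eu}_T(x_i,Z_i)=z\cdot\mathrm{det}\,T_{x_i}Z_i$ for some $z\in\Q\setminus\{0\}$, where $\mathrm{det}\,T_{x_i}Z_i$ is the product of the characters of the $T$-action on the Zariski tangent space $T_{x_i}Z_i$. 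This is a product of elements of $\mathfrak{X}^*(T)$, hence an element of $S$.

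Next I would verify the GKM edge conditions for $\mathcal{G}(Z_i,T)$. For an edge $\overline{E}$ both of whose endpoints are distinct from $x_i$, both entries of $\tau^{(i)}$ vanish and $0-0\in\alpha_E S$ holds trivially. The only nontrivial conditions come from edges $\overline{E}$ incident to $x_i$, say joining $x_i$ to some $x_j$ with $j<i$; here the condition reads $\mathrm{Eu}_T(x_i,Z_i)-0\in\alpha_E S$, i.e. $\alpha_E$ must divide $\mathrm{Eu}_T(x_i,Z_i)$ in $S$. This is where the geometry enters: by Remark~\ref{rem:1dimlOrbits} the action is locally linearisable, so $\overline{E}\cong\mathbb{P}^1$ carries a linear $T$-action whose tangent line at $x_i$ is a $T$-stable line inside $T_{x_i}Z_i$ on which $T$ acts by $\alpha_E$ (up to sign). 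Hence $\alpha_E$ occurs among the characters of $T_{x_i}Z_i$ and therefore divides $\mathrm{det}\,T_{x_i}Z_i$, and so divides $\mathrm{Eu}_T(x_i,Z_i)$. Since divisibility is insensitive both to the sign ambiguity in $\alpha_E$ and to the nonzero scalar $z$, the edge relation holds.

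Combining the two points, $\tau^{(i)}$ satisfies all edge conditions of Theorem~\ref{thm:GKM} for $Z_i$, so $\tau^{(i)}\in\mathrm{Im}(\psi_i)=H_T^\bullet(Z_i)$. I expect the main obstacle to be the divisibility claim of the third paragraph, namely matching each incident edge label $\alpha_E$ with a tangent weight at $x_i$; once local linearisability and the rationally-smooth Euler class formula of Lemma~\ref{lma:euler-class-along-resolution}(2) are in hand this is transparent, but it is the step that genuinely uses the geometry of one-dimensional orbits rather than a formal manipulation. An alternative route, closer to the spirit of Gonzales \cite{Gonzales2014}, would be to realise $\tau^{(i)}$ as the image of a Thom class under the natural map $H_T^\bullet(Z_i,Z_{i-1})\to H_T^\bullet(Z_i)$, using that $W_i$ is an open rational cell; there the crux would instead be the self-intersection computation identifying the restriction of that class to $x_i$ with $\mathrm{Eu}_T(x_i,Z_i)$ and its vanishing at the remaining fixed points lying in $Z_{i-1}$.
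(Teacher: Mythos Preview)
Your proposal is correct and follows essentially the same route as the paper: reduce to the GKM edge relations for $Z_i$, observe that only edges through $x_i$ impose a nontrivial condition, and then verify that each such $\alpha_E$ divides $\mathrm{Eu}_T(x_i,Z_i)$. The only difference is in how that divisibility is justified: the paper invokes Gonzales' \cite[Corollary~5.6]{Gonzales2014} to write the Euler class of the rational cell $W_i$ directly as a nonzero multiple of $\prod_r \alpha_{E_r}$ (and then uses \cite[Lemma~6.3]{Gonzales2014} to argue that every one-dimensional orbit in $Z_i$ through $x_i$ already lies in $W_i$), whereas you appeal to Lemma~\ref{lma:euler-class-along-resolution}(2) together with the observation that each orbit closure contributes a $T$-stable tangent line at $x_i$; both arguments yield the same divisibility and neither is materially shorter than the other.
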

\begin{proof}By Theorem~\ref{trm:t-stable_filtration-general-setting}(4), $(Z_i, T)$ is itself a GKM-variety for any $i\in [m]$. Therefore, by Theorem~\ref{thm:GKM}, $\tau^{(i)}\in H_T^\bullet(Z_i)$ if and only if all relations coming from the edges are verified. Since all but one entry of $\tau^{(i)}$ vanish, we only have to check that
\[
\mr{Eu}_T(x_i, Z_i)\equiv 0\mod \alpha_E
\]
 for any $\overline{E}\in \mathcal{G}(Z_i, T)_1$ adjacent to $x_i$. To obtain this, we just notice that the proof of \cite[Lemma~6.4]{Gonzales2014} works under our assumptions too. Indeed, by \cite[Corollary~5.6]{Gonzales2014}, there exists a non-zero $z\in\Q$ such that
\[
Eu_T(x_i, W_i)=z\cdot\alpha_{E_1}\cdot\ldots\cdot\alpha_{E_r}
,\]
where $E_1 \ldots, E_r$ are the 1-dimensional $T$-orbits lying in $W_i$ and whose closure contains $x_i$. Recall that $W_i$ is open in $Z_i$ and its complement is $T$-stable.  Hence, thanks to local linearisability, we can apply the proof of \cite[Lemma~6.3]{Gonzales2014}, to deduce that all one-dimensional $T$-orbits, lying in $Z_i$ and containing $x_i$ in their closures, are actually contained in $W_i$.  We conclude that the edges in $\mathcal{G}(Z_i, T)_1$ that are adjacent to $x_i$ are exactly $\{\overline{E_1}, \ldots, \overline{E_r}\}$ and the product of their labels is a non-zero multiple of $\mr{Eu}_T(x_i, Z_i)$. 
\end{proof}

The following theorem is due to Gonzales, and our only contribution is to notice that, once again, his proof works also under our hypotheses:
\begin{trm}\label{trm:Basis1}
Let $(X,T)$ be a BB-filterable GKM-variety with filtration 
$$\emptyset=Z_0\subset Z_1\subset\ldots\subset Z_m=X$$ as in Theorem~\ref{trm:t-stable_filtration-general-setting}. Let $X^T=\{x_1, \ldots, x_m\}$ with $x_i\in W_i=Z_i\setminus Z_{i-1}$. There exists a basis $\{\varphi^{(i)}\}_{i\in [m]}$ of $H_T^\bullet(X)$ as a free $S$-module satisfying the following two properties:
\begin{enumerate}
\item $\varphi^{(i)}_{x_j}=0$ for any $j<i$,
\item $\varphi^{(i)}_{x_i}=\mr{Eu}_T(x_i, Z_i)$.
\end{enumerate}
\end{trm}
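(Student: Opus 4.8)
The plan is to follow Gonzales' construction \cite[\S6]{Gonzales2014} essentially verbatim, checking that the argument never uses normality but only the outputs of Theorem~\ref{trm:t-stable_filtration-general-setting} and Lemma~\ref{lem:tau}, which we have already secured in the BB-filterable setting. The idea is to obtain each $\varphi^{(i)}$ as a lift to $H_T^\bullet(X)$ of the class $\tau^{(i)}\in H_T^\bullet(Z_i)$ from Lemma~\ref{lem:tau}. Since $\tau^{(i)}$ takes the value $\mr{Eu}_T(x_i,Z_i)$ at $x_i$ and vanishes at the remaining $T$-fixed points $x_1,\dots,x_{i-1}$ of $Z_i$, any such lift automatically satisfies properties (1) and (2); the real work is to produce the lift and to verify that the resulting family is an $S$-basis.

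First I would establish that the restriction map $H_T^\bullet(X)\to H_T^\bullet(Z_i)$ is surjective for every $i$. By Theorem~\ref{trm:t-stable_filtration-general-setting}(3) each $Z_j$ has vanishing odd rational cohomology, hence is equivariantly formal by Lemma~\ref{lma:equiformality}; consequently the long exact sequence in equivariant cohomology of the pair $(Z_j,Z_{j-1})$ splits into short exact sequences and the restriction $H_T^\bullet(Z_j)\to H_T^\bullet(Z_{j-1})$ is surjective. Composing these along the filtration $Z_m=X\supset\dots\supset Z_i$ yields the desired surjectivity, and I define $\varphi^{(i)}$ to be any preimage of $\tau^{(i)}$. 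As $\varphi^{(i)}$ and $\tau^{(i)}$ agree after restriction to $Z_i$ and $\{x_1,\dots,x_i\}=X^T\cap Z_i$, properties (1) and (2) follow at once.

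It remains to prove that $\{\varphi^{(i)}\}_{i\in [m]}$ is a basis. Identifying $H_T^\bullet(X)$ with $\mr{Im}(\psi)\subseteq\bigoplus_j S$ via Theorem~\ref{thm:GKM}, the matrix $\big(\varphi^{(i)}_{x_j}\big)_{i,j}$ is upper triangular with diagonal entries $\mr{Eu}_T(x_i,Z_i)$, each a nonzero element of the fraction field $Q$ of $S$. Hence the $\varphi^{(i)}$ are linearly independent over $Q$, a fortiori over $S$. For the spanning property I would argue by triangular back-substitution: given $f\in H_T^\bullet(X)$, the coefficients $c_i$ in $f=\sum_i c_i\varphi^{(i)}$ are forced, in increasing order of $i$, by the equation $c_i\,\mr{Eu}_T(x_i,Z_i)=f_{x_i}-\sum_{k<i}c_k\,\varphi^{(k)}_{x_i}$. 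The content of the proof is that each $c_i$ lies in $S$ and not merely in $Q$.

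This integrality of the coefficients is the main obstacle, and it is handled as in \cite[Lemma~6.4, Theorem~6.5]{Gonzales2014}. By the choice of $c_1,\dots,c_{i-1}$ the residual class $g:=f-\sum_{k<i}c_k\varphi^{(k)}$ vanishes at $x_1,\dots,x_{i-1}$, hence restricts to zero on $Z_{i-1}$ (injectivity of $\psi$ for the GKM-variety $Z_{i-1}$). Recall from the proof of Lemma~\ref{lem:tau} the factorization $\mr{Eu}_T(x_i,Z_i)=z\cdot\alpha_{E_1}\cdots\alpha_{E_r}$, where $E_1,\dots,E_r$ are the one-dimensional $T$-orbits in $W_i$ whose closure contains $x_i$; the second endpoint of each $\overline{E_l}$ is a fixed point of $Z_i$ other than $x_i$, hence lies in $Z_{i-1}$, so $g$ vanishes there. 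The GKM edge relation of Theorem~\ref{thm:GKM} then forces $g_{x_i}\equiv 0 \bmod \alpha_{E_l}$ for every $l$, and since the $\alpha_{E_l}$ are pairwise coprime in $S$, divisibility by the product $\alpha_{E_1}\cdots\alpha_{E_r}$, and thus by $\mr{Eu}_T(x_i,Z_i)$, follows; therefore $c_i\in S$. The only ingredients requiring attention under our weaker hypotheses are the local linearisability needed for the edge relation and the factorization of the Euler class, and both are available here, the former by the Remark following Theorem~\ref{trm:t-stable_filtration-general-setting} and the latter through Lemma~\ref{lem:tau}, so Gonzales' proof goes through unchanged.
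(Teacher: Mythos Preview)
Your proof is correct and follows essentially the same route as the paper. The paper organises the construction as an induction on $m$ (lift a basis of $H_T^\bullet(Z_{m-1})$ to $H_T^\bullet(X)$ via the surjection coming from \cite[(1)]{Gonzales2014}, then append $\varphi^{(m)}:=\tau^{(m)}$), whereas you lift each $\tau^{(i)}$ directly from $H_T^\bullet(Z_i)$ to $H_T^\bullet(X)$ by composing the one-step surjections; unfolding the paper's induction yields exactly your construction. For the basis property the paper simply cites \cite[Lemma~6.2]{Gonzales2014}, while you reproduce that argument (triangular back-substitution plus divisibility of $g_{x_i}$ by $\mr{Eu}_T(x_i,Z_i)$ via the edge relations); your handling of the integrality step, using that the second endpoint of each $\overline{E_l}$ lies in $Z_{i-1}$, is precisely the mechanism behind Gonzales' lemma. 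The one point you state without justification is the pairwise coprimality of the $\alpha_{E_l}$; this is guaranteed by the rational-cell hypothesis (see \cite[Corollary~5.6]{Gonzales2014} and its surrounding discussion), so there is no gap.
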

\begin{proof}The proof is by induction on the length $m$ of the filtration. If $m=1$, then $X$ is a point and the statement is trivial. Since $(Z_{m-1}, T)$ is a BB-filterable GKM-variety, we get cohomology generators $\{\tilde{\varphi}^{(i)}\}$ of $H_T^\bullet(Z_{m-1})$ satisfying (1) and (2). These elements, can be lifted to $H_T^\bullet(Z_m)=H_T^\bullet(X)$, in a way which is compatible with the localisation map $\psi$, thanks to the commuting diagram \cite[(1)]{Gonzales2014}. At this point, we have $m-1$ elements $\varphi^{(1)}, \ldots, \varphi^{(m-1)}$ satisfying the desired properties (1) and (2). For the missing generator we set $\varphi^{(m)}:=\tau^{(m)}$, where $\tau^{(m)}$ is the one from Lemma~\ref{lem:tau}. 

Standard arguments imply that a set of elements satisfying properties (1) and (2) is linearly independent and generates $H_T^\bullet(X)$ (cf. \cite[Lemma~6.2]{Gonzales2014}).
\end{proof}

\begin{rem}
Notice that Theorem~\ref{trm:Basis1} gives existence but not uniqueness of the basis. Indeed, the induction step of the proof consists in lifting classes from $H_T^\bullet(Z_{m-1})$ to $H_T^\bullet(X)$ and in general this lift does not need to be unique. 
It is hence natural to ask whether there is a preferred basis, among the ones which satisfy properties (1) and (2) of Theorem~\ref{trm:Basis1}, and if so, how to choose it. 
\end{rem}
\begin{rem}
Observe that $\tau^{(i)}$ is a special element of the $T$-equivariant cohomology of the $i$-th piece in the filtration of $X$, whereas $\varphi^{(i)}$ denotes the $i$-th element in the ordered basis for the $T$-equivariant cohomology of $X$. For the runnig example from the introduction we have $\tau^{(2)}=(0,\epsilon_1-\epsilon_2, 0)$ and $\varphi^{(2)}=(0,\epsilon_1-\epsilon_2, \epsilon_1-\epsilon_3)$.
\end{rem}
\subsection{Local Indices and a special Module Basis} 
Before constructing the desired basis, we need to introduce another ingredient: the local index of a cohomology class at a fixed point. We then show that Theorem~\ref{trm:Basis1} produces only one basis that also satisfies a particular condition with respect to the local index.

The local index of $f \in H_T^\bullet(X)$ at $x_i \in X^T$ is defined in terms of what is called integration map introduced in \cite[\S1.4]{Arabia98}. Instead of the original definition, we will define it via an explicit formula (under the localisation map $\psi$):
\begin{defi}(cf. \cite[Lemma~6.7]{Gonzales2014}) Let $X^T=\{x_1, \ldots, x_m\}$. For $i \in [m]$, the local index of $f\in H_T^\bullet(X)$ at $x_i \in X^T$ is 
\begin{equation}\label{eqn:localIndex}
I_i(f) = \sum_{\substack{j \in [m] \ : \ \\x_j\in Z_i}} \frac{f_{x_j}}{\mr{Eu}_T(x_j,Z_i) \ \ }.
\end{equation}
\end{defi}
\begin{ex}
Consider the action of $T= (\mathbb{C}^*)^3$ on $X=\mathbb{P}^2(\mathbb{C})$ as studied in the introduction. The local index of $f=(0,\epsilon_1-\epsilon_2, \epsilon_1-\epsilon_3)$ at $2$ is
\[
I_2(f) = \frac{f_1}{\mr{Eu}_T(x_1,Z_2)}+\frac{f_2}{\mr{Eu}_T(x_2,Z_2)}= 0 + \frac{\epsilon_1-\epsilon_2}{\epsilon_1-\epsilon_2}=1.
\]
\end{ex}
\begin{rem}The above definition is useful for computations, but has the disadvantage that by the formula one cannot tell that $I_i(f)$ is actually polynomial. 
Luckily, this is the case, and it is immediate by the definition in terms of the integration map. 
\end{rem}

The following theorem provides us with a preferred choice among the bases from Theorem~\ref{trm:Basis1}. Since everything depends on the order of enumeration of the fixed points (and hence of the filtration), which is not unique, we refrain from referring to this basis as canonical.
\begin{trm}
\label{trm:cohomology-generators-general-setting} 
Let $(X,T)$ be a BB-filterable GKM-variety with filtration 
$$\emptyset=Z_0\subset Z_1\subset\ldots\subset Z_m=X$$ as in Theorem~\ref{trm:t-stable_filtration-general-setting}. Let $X^T=\{x_1, \ldots, x_m\}$ with $x_i\in W_i=Z_i\setminus Z_{i-1}$. There exists a unique basis $\{\theta^{(i)}\}_{i \in [m]}$ of $H_T^\bullet(X)$ as an $S$-free module, such that for any $i \in [m]$ the following properties hold:
\begin{enumerate}
\item $I_i(\theta^{(i)})= 1$,
\item  $I_j(\theta^{(i)}) = 0$  for all $j \neq i$,
\item  $\theta^{(i)}_{x_j}=0$ for all $j < i$,
\item  $\theta^{(i)}_{x_i}= \mr{Eu}_T(x_i,Z_i)$.
 \end{enumerate}
\end{trm}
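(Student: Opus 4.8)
The plan is to construct the basis $\{\theta^{(i)}\}$ by modifying the basis $\{\varphi^{(i)}\}$ from Theorem~\ref{trm:Basis1} so that the local-index conditions (1) and (2) are enforced, while preserving the triangularity conditions (3) and (4). First I would record the key compatibility between the two sets of conditions: properties (3) and (4) say that, under the localisation map $\psi$, the matrix $\big(\theta^{(i)}_{x_j}\big)_{i,j}$ is lower-triangular (in the ordering of fixed points coming from the filtration) with diagonal entries $\mr{Eu}_T(x_i,Z_i)$. I expect the main structural observation to be that the local index interacts triangularly with this data: by the defining formula \eqref{eqn:localIndex}, $I_j(\theta^{(i)})$ only involves the entries $\theta^{(i)}_{x_k}$ with $x_k\in Z_j$, and when $j<i$ property (3) forces all such entries to vanish, so that $I_j(\theta^{(i)})=0$ is automatic for $j<i$. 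Dually, $I_i(\theta^{(i)})$ has leading term $\theta^{(i)}_{x_i}/\mr{Eu}_T(x_i,Z_i)=1$ by (4), so condition (1) is compatible with (3) and (4) from the start. The real content is therefore only the vanishing $I_j(\theta^{(i)})=0$ for $j>i$.

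Next I would set up the existence part as a finite downward/upward correction procedure. Starting from a basis $\{\varphi^{(i)}\}$ satisfying (3) and (4), I would look for $\theta^{(i)}$ of the form
\[
\theta^{(i)}=\varphi^{(i)}+\sum_{k>i} c_{i,k}\,\varphi^{(k)},\qquad c_{i,k}\in S,
\]
where the correction terms with index $k>i$ preserve properties (3) and (4) of $\theta^{(i)}$ (since each $\varphi^{(k)}$ with $k>i$ vanishes at $x_j$ for all $j\le i$, it cannot spoil (3) or change the entry $\theta^{(i)}_{x_i}$). I would then impose the conditions $I_j(\theta^{(i)})=0$ for $j=i+1,\dots,m$. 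Because $I_j(\varphi^{(k)})$ is again triangular — it vanishes for $k>j$ and equals a unit multiple of $\mr{Eu}$-data for $k=j$ — these equations form a triangular linear system for the unknown coefficients $c_{i,k}$ that can be solved recursively, determining each $c_{i,k}$ uniquely in terms of the $c_{i,\ell}$ with $\ell<k$. This simultaneously yields existence and uniqueness of the coefficients, hence of $\theta^{(i)}$. I would cite \cite[Lemma~6.7, Theorem~6.9]{Gonzales2014} here and emphasise, as the paper does throughout Section~\ref{sec:ModuleBases}, that Gonzales' argument only uses the BB-filterable GKM structure and therefore goes through verbatim without the normality hypothesis.

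The step I expect to be the main obstacle is verifying that the correction coefficients $c_{i,k}$ genuinely lie in $S$ rather than merely in the fraction field $Q$. The formula \eqref{eqn:localIndex} divides by Euler classes, so a naive solution of the triangular system produces elements of $Q$; one must argue that integrality is preserved at each step. The clean way to see this is to invoke the integration-map definition of the local index, under which $I_j(f)\in S$ for every $f\in H_T^\bullet(Z_j)$ (as noted in the remark following \eqref{eqn:localIndex}), together with the fact that $\{\varphi^{(k)}\}$ is an honest $S$-basis; then the $c_{i,k}$ arise as $S$-linear combinations of integral local indices and are automatically in $S$. Once integrality is secured, linear independence and spanning follow from the lower-triangular unipotent change of basis between $\{\theta^{(i)}\}$ and $\{\varphi^{(i)}\}$, and uniqueness follows because any two bases satisfying (1)--(4) differ by a lower-triangular unipotent matrix whose off-diagonal entries are forced to vanish by conditions (1) and (2). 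This completes the argument.
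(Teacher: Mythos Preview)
Your proposal is correct and follows essentially the same approach as the paper: both start from the basis $\{\varphi^{(i)}\}$ of Theorem~\ref{trm:Basis1}, apply a triangular correction using higher-index classes to kill the local indices $I_j$ for $j>i$, invoke the integrality of the local index (the remark after \eqref{eqn:localIndex}) to ensure the correction coefficients lie in $S$, and cite Gonzales~\cite[Theorem~6.9]{Gonzales2014} while noting that normality is not used. The only cosmetic difference is that the paper performs the correction recursively with the already-constructed $\theta^{(k)}$ (for $k>i$) rather than with the $\varphi^{(k)}$, which makes the coefficient at each step literally equal to a single local index; your linear-system formulation is equivalent and arguably cleaner.
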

\begin{proof}
As for previous results, Gonzales' proof of \cite[Theorem~6.9]{Gonzales2014} goes through and hence we limit ourselves to give only a sketch.

Firstly, we show existence. Let $i \in [m]$, and consider $\widetilde{\theta^{(i)}}:=z^{-1}\cdot\varphi^{(i)}$, where $\varphi^{(i)}$ is any element of $H_T^\bullet(X)$ satisfying (1) and (2) from Theorem~\ref{trm:Basis1}, and $z\in \Q$ is such that $\varphi^{(i)}_{x_i}=z\cdot \mr{Eu}_T(x_i, Z_i)$. Thanks to \eqref{eqn:localIndex}, it is easy to check that (1), (3) and (4) hold. If (2) holds too, we are done, otherwise, we inductively modify $\widetilde{\theta^{(i)}}$ as follows: let $k_0:=\min\{j>i\mid I_j(\widetilde{\theta^{(i)}})\neq 0\}$ and replace $\widetilde{\theta^{(i)}}$ by 
$\widetilde{\theta^{(i)}}-I_{k_0}(\widetilde{\theta^{(i)}})\widetilde{\theta^{(k_0)}}$. It is again an easy check to see that the local index of this new element vanishes at any point $x_j$ with $j\leq k_0$ and $j\neq i$, and that (1), (3), (4) still hold. At the end of this process we get an element of $H_T^\bullet(X)$ that we denote by $\theta^{(i)}$ and that satisfies (1), (2), (3), (4).

Secondly, they freely generate $H_T^\bullet(X)$ by standard arguments (cf. proof of \cite[Lemma~6.2]{Gonzales2014}).

Finally, the uniqueness is shown by contradiction. Assume that we can find $\theta^{(i)}$ and $\psi^{(i)}$ both satisfying (1)--(4) and such that $\theta^{(i)}\neq \psi^{(i)}$. As they are distinct, we can find $k_0:=\min\{j\mid \theta^{(i)}_{x_{j}}-\psi^{(i)}_{x_j}\neq 0\}$. Since they both satisfy (4), $k_0\neq i$ and we have that $I_{k_0}(\theta^{(i)}-\psi^{(i)})=0$. But from \eqref{eqn:localIndex}, we get
\[
0\neq\theta^{(i)}_{x_{k_0}}-\psi^{(i)}_{x_{k_0}}=\underbrace{I_{k_0}(\theta^{(i)}-\psi^{(i)})}_{=0}\cdot \mr{Eu}_T(x_{k_0}, Z_{k_0}),
\] 
which gives us the desired contradiction.
\end{proof}

\begin{rem}If $G\supset P\supset T$ are, respectively, a complex linear reductive algebraic group, a parabolic subgroup and a maximal torus, then the above basis of $H_T^\bullet(G/P)$ coincides with the one given by equivariant Schubert classes. 
\end{rem}

The rest of this article is devoted to provide a class of applications for this result. Namely, we want to introduce certain quiver Grassmannians and show that they are projective BB-filterable GKM-varieties.
\section{Generalities on Quiver Grassmannians}\label{sec:GenQuiverRep}
We recall here some definitions concerning quivers, their representations and quiver Grassmannians which are required later. For more details we refer the reader to the articles by Cerulli Irelli \cite{Cerulli2011,Cerulli2016} as well as the references therein and the book by Schiffler \cite{Schiffler2014}. 

\begin{defi}A (finite) \textbf{quiver} $Q = (Q_0 , Q_1)$ is an ordered pair where 
\begin{itemize}
\item $Q_0$ is a finite set of vertices, 
\item $Q_1$ is a finite set of oriented edges.  
\end{itemize}
\end{defi}
For an edge $a \in Q_1$, we denote the source of $a$ by $s_a$ and the target by $t_a$.
\begin{defi}\label{def:quiver-rep} Let $Q$ be a quiver. 
\begin{enumerate}
\item A (finite-dimensional) \f{$Q$-representation} $M$ over the field $\Kbb$ is given by $((M^{(i)})_{i\in Q_0}, (M_{a})_{a\in Q_1})$ where
\begin{itemize}
\item $M^{(i)}$ is a (finite-dimensional) $\Kbb$-vector space for any $i\in Q_0$,
\item $M_a: M^{(s_a)}\to M^{({t_a})}$ is a $\Kbb$-linear map for any $a \in Q_1$.
\end{itemize}
\item The \f{dimension vector} of a finite dimensional $Q$-representation $M$ is 
\[\bdim M:=(\dim_{\Kbb}M^{(i)})_{i\in Q_0}\in\Z_{\geq 0}^{Q_0}.\]
\item A \f{morphism} between two $Q$-representations $M$ and $N$ is a collection of $\Kbb$-linear maps $(\psi_i:M^{(i)}\to N^{(i)})_{i\in Q_0}$ such that the following diagram commutes.
\begin{center}
\begin{tikzpicture}

	\draw[arrows={-angle 90}, shorten >=9, shorten <=9]  (-.1,0) -- (-.1,-1.6);
	\draw[arrows={-angle 90}, shorten >=9, shorten <=9]  (0,0) -- (1.6,0);
	\draw[arrows={-angle 90}, shorten >=9, shorten <=9]  (1.6,0) -- (1.6,-1.6);
	\draw[arrows={-angle 90}, shorten >=9, shorten <=9]  (0,-1.6) -- (1.6,-1.6);
	
	\node at (0,0) {$M^{(s_a)}$};
	\node at (0,-1.6) {$M^{(t_a)}$};
    \node at (1.7,0) {$N^{(s_a)}$};
    \node at (1.7,-1.6) {$N^{(t_a)}$};
    
    	\node at (-.4,-.8) {$M_{a}$};
	\node at (0.8,.3) {$\psi_{s_a}$};
    \node at (2.0,-.8) {$N_{a}$};
    \node at (0.8,-1.9) {$\psi_{t_a}$};
    
    	\node[rotate=45] at (.8,-.8) {$\equiv$};
	 
\end{tikzpicture}
\end{center}
\end{enumerate}
\end{defi}
Observe that part (1) and (3) of the above definition work with finite and infinite dimensional vector spaces. The above-defined $Q$-representations together with the morphisms among them form a category, which is denoted by $\mathrm{Rep}_\Kbb(Q)$. By $\mathrm{rep}_\Kbb(Q)$ we denote the full subcategory whose objects are the finite-dimensional $Q$-representations. The following theorem tells us that $\mathrm{rep}_\Kbb(Q)$ is Krull-Schmidt.
\begin{trm}\label{trm:KirillovKSThm}(cf.\cite[Theorem~1.11]{Kirillov2016}) Every object of $\mathrm{rep}_\Kbb(Q)$ is isomorphic to a direct sum of indecomposable objects, and this decomposition is unique up to reordering.
\end{trm}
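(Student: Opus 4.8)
The plan is to prove the two assertions—existence and uniqueness of the decomposition—separately, the first being elementary and the second resting on the classical observation that indecomposable objects of $\mathrm{rep}_\Kbb(Q)$ have local endomorphism rings.

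For existence I would argue by induction on the total dimension $\sum_{i\in Q_0}\dim_\Kbb M_i$ of a representation $M$. If $M$ is indecomposable there is nothing to do; otherwise $M\cong M'\oplus M''$ with $M',M''$ nonzero, each of strictly smaller total dimension, so the inductive hypothesis applies to both and splices into a decomposition of $M$ into indecomposables.

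The heart of the matter is the uniqueness statement, for which I would first establish that $\mathrm{End}(M)$ is a local ring whenever $M$ is indecomposable. The key input is that $\Hom$-spaces in $\mathrm{rep}_\Kbb(Q)$ are finite dimensional over $\Kbb$, so $\mathrm{End}(M)$ is a finite-dimensional $\Kbb$-algebra and Fitting's lemma is available: for $f\in\mathrm{End}(M)$ one has $M=\ker(f^N)\oplus\mathrm{im}(f^N)$ for $N\gg 0$, and indecomposability forces one of the two summands to vanish, so every endomorphism of $M$ is either nilpotent or an automorphism. To upgrade this to locality it suffices to check that the sum of two non-units is again a non-unit (closure under multiplication being automatic in a finite-dimensional, hence Artinian, algebra, where one-sided inverses are two-sided): if $f+g$ were a unit $u$ with $f,g$ non-units, then $u^{-1}f$ and $u^{-1}g=1-u^{-1}f$ would be commuting non-units summing to $1$, whence $u^{-1}f$ nilpotent makes $1-u^{-1}f=u^{-1}g$ invertible—contradicting that $g$, and hence $u^{-1}g$, is a non-unit.

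With locality in hand, uniqueness follows from the standard exchange (Krull--Schmidt--Azumaya) argument. Given two decompositions $M\cong\bigoplus_{i=1}^r M_i\cong\bigoplus_{j=1}^s N_j$ into indecomposables, I would write $\mathrm{id}_{M_1}=\sum_j \pi_1\iota'_j\pi'_j\iota_1$ in $\mathrm{End}(M_1)$, where $\iota,\pi$ (resp. $\iota',\pi'$) denote the structure maps of the two decompositions. Since $\mathrm{End}(M_1)$ is local, the non-units form a proper ideal, so some summand—say the $j=1$ term after reindexing—is an automorphism of $M_1$; this exhibits $M_1$ as a direct summand of the indecomposable $N_1$, forcing $M_1\cong N_1$. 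One then adjusts the ambient isomorphism so that it carries $M_1$ isomorphically onto $N_1$, cancels this common summand, and induces an isomorphism $\bigoplus_{i\geq 2}M_i\cong\bigoplus_{j\geq 2}N_j$ to which induction on $r$ applies, yielding $r=s$ and a matching of the summands up to reordering and isomorphism. The main obstacle is precisely the locality of $\mathrm{End}(M)$: once Fitting's lemma delivers the nilpotent-or-invertible dichotomy, the remaining combinatorics of the exchange argument is routine.
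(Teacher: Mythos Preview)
Your proof is correct; it is the standard Krull--Schmidt argument via Fitting's lemma and the exchange procedure, and is essentially what one finds in Kirillov's textbook. Note, however, that the paper itself does not prove this statement: it is quoted as a background result with a reference to Kirillov, so there is no in-paper proof to compare your argument against.
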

\begin{defi}
Let $Q$ be a quiver and $M$ an object of $\mathrm{rep}_\Kbb(Q)$. 
\begin{enumerate}
\item  A subobject of $M$ is called a \f{subrepresentation}.
\item For any $\mb{e}\in\Z_{\geq 0}^{Q_0}$, the \f{quiver Grassmannian} $\mr{Gr}_{\mb{e}}(M)$ is the variety that parametrises all $\mb{e}$-dimensional subrepresentations of $M$.
\end{enumerate}
\end{defi}
\begin{rem}\label{rem:EmptyQuiverGr}
It is immediate to see that if there is an $i\in Q_0$ such that $\mb{e}_i>\dim_{\Kbb} M^{(i)}$, then $\mr{Gr}_{\mb{e}}(M)$ is empty. We will therefore only consider $\mb{e}$ such that $e_i\leq \dim_{\Kbb}M^{(i)}$ for all $i\in Q_0$. We will denote this relation between dimension vectors by $\mb{e}\leq \bdim M$.
\end{rem}
\begin{rem}\label{rem:QuiverGrIndptBasis}
The algebraic variety structure of the quiver Grassmannian is obtained by embedding it into the classical Grassmannian of $\sum_i e_i$-dimensional subspaces of $V=\bigoplus_{i\in Q_0}M^{(i)}$, therefore it does not depend on the choice of bases for the $M^{(i)}$'s.
\end{rem}
\begin{ex}\label{ex:DynkinAEquioriented}Let $Q$ be the type $\tt A_{n}$ equioriented Dynkin quiver, that is the quiver with $Q_0=\{1, 2, \ldots, n\}$, $Q_1=\{i\to i+1\mid i=1\, \ldots n-1\}$. 

Consider the complex $Q$-representation $M$ given by $M^{(i)}=\C^{n+1}$ for any $i\in Q_0$ and $M_{a}=\mr{id}_{\C^{n+1}}$ for any $a\in Q_1$. Then $\mr{Gr}_{(1,2,\ldots, n)}(M)$ is isomorphic to the variety $\mathcal{F}l_{n+1}$ of complete flags in $\C^{n+1}$.

If we relax the conditions on maps, that is if we consider any complex $Q$-representation $N$ with $N^{(i)}=\C^{n+1}$ for any $i\in Q_0$, then $\mr{Gr}_{(1,2,\ldots, n)}(N)$ is a linear degeneration of $\mathcal{F}l_{n+1}$ (see \cite{CFFFR2017}).
\end{ex}
For any $Q$-representation $M$ and any collection of bases for the vector spaces $M^{(i)}$ with $i\in Q_0$, it is possible to define a new quiver. This will help to provide a combinatorial description of the moment graph of a torus action on the class of quiver Grassmannians, which we will be interested in later.
\begin{defi}\label{def:CoeffQuiver}Let $Q$ be a quiver and let $M$ be an object of $\mathrm{rep}_\Kbb(Q)$. For $i\in Q_0$, let $B^{(i)}:=\{v^{(i)}_k\}$ be a basis of $M^{(i)}$ and let $B:=\bigcup_{i\in Q_0}B^{(i)}$. The \f{coefficient quiver} $Q(M,B)$ is given by
\begin{itemize}
\item $Q(M,B)_0:=B$,
\item $v^{(i)}_k\to v^{(j)}_\ell\in Q(M,B)_1$ if and only if there exists an $a\in Q_1$ such that $s_a=i$, $t_a=j$ and the coefficient of $v^{(j)}_\ell$ in $M_a(v^{(i)}_k)$ is non-zero.
\end{itemize}
\end{defi}
\begin{rem}\label{rem:ConnCompsCoeffQuiver} 
By \cite[Theorem~1.11]{Kirillov2016} (see Theorem~\ref{trm:KirillovKSThm}) every quiver representation is isomorphic to a direct sum of indecomposable quiver representations, which is unique up to the order of the summands. This isomorphism translates to a base change of the representation $M$ and implies that there exists a basis $B$ such that the connected components of the coefficient quiver $Q(M,B)$ are in bijection with the indecomposable summands in the decomposition of $M$. From now on we always work with bases satisfying this property.
\end{rem}
We conclude this subsection by introducing the notion of attractive grading on the vertex set of the coefficient quiver. This is a crucial tool to study cellular decompositions of quiver Grassmannians. 
\begin{defi}\label{defi:attractive-grading}Let $M$ and $B$ be as in Definition~\ref{def:CoeffQuiver}, and let $Q(M,B)$ be the corresponding coefficient quiver.
\begin{enumerate}
\item A \f{grading} on $Q(M,B)_0$ is a tuple $\mb{wt}=\big(\mr{wt}(v^{(i)}_k)\big)\in \mathbb{Z}^B$.
\item A grading $\mb{wt}$ on $Q(M,B)_0$ is \f{attractive} if 
\begin{itemize}
\item[(AG1)] for any $i\in Q_0$ it holds that $\mr{wt}(v^{(i)}_k)>\mr{wt}(v^{(i)}_\ell)$ whenever $k>\ell$,
\item[(AG2)] for any $a\in Q_1$ there exists a weight $d(a)\in\Z$ such that \[\mr{wt}\big(v^{(t_a)}_\ell\big)=\mr{wt}\big(v^{(s_a)}_k\big)+d(a)\] whenever $v^{(s_a)}_k\to v^{(t_a)}_\ell\in Q(M,B)_1$.
\end{itemize}
\end{enumerate}
\end{defi} 
\begin{rem}
For a special class of quiver representations we describe an approach to construct attractive gradings of their coefficient quivers in Proposition~\ref{prop:attractiveGradingDeltan}.
\end{rem}
\begin{rem}\label{rem:weight-functions}
The above definition is inspired by \cite[Theorem~1]{Cerulli2011}, where a grading on $Q(M,B)_0$ with property (AG2) and (AG1) with $"\neq"$ instead of $">"$ is used to define a $\C^*$-action on $\mr{Gr}_\mb{e}(M)$ \cite[Lemma 1.1]{Cerulli2011} via \begin{equation}\label{eq:C*ActionAttractiveGrading} z\cdot b:=z^{\mr{wt}(b)}b\qquad \hbox{ for }z\in\C^*, b\in B. \end{equation}
Looking at the fixed point set of such an action allowed Cerulli Irelli to compute the Euler characteristic of $\mr{Gr}_\mb{e}(M)$ (see \cite[Theorem~1]{Cerulli2011}). His construction was generalised by Haupt \cite[Theorem~1.2]{Haupt2012}.
\end{rem}
\begin{rem}
A different approach to compute cellular decompositions of quiver Grassmannians, which does not rely on a $\C^*$-action, is presented in \cite{CEFR2018}. Since we are interested in cellular decompositions which are stable under the action of some larger-rank torus, it is convenient for us to start from a $\C^*$-action on our varieties.
\end{rem}

\section{Nilpotent Representations of the Equioriented Cycle}\label{sec:quiver-grass-for-cycle}
Let $\Delta_n$ denote the equioriented cycle on $n$ vertices with arrows $i \to i+1$ for $i \in {1,\dots,n-1}$ and $n\to1$. The set of vertices and the set of arrows in $\Delta_n$ are in bijection with $\Z_n := \Z/n\Z$.
\begin{defi}An object $M$ of $\mathrm{rep}_\Kbb(\Delta_n)$ is  \f{nilpotent} if there exists an $N\in \Z_{\geq 0}$ such that $M_{a+N}\circ M_{a+N-1}\circ\ldots\circ M_{a}=0$, for any $a\in Q_1=\Z/n\Z$ . The minimal $N$ such that this is satisfied is called \f{nilpotence parameter} of $M$. 
\end{defi}
\begin{rem}\label{rem:nilpoRepDeltanGradedVecSpace}
Notice that a representation $M=((M^{(i)})_{i\in\Z/n\Z},(M_a)_{a\in\Z/n\Z})$ of $\Delta_n$, is the same as a $\Z/n\Z$-graded $\Kbb$-vector space $V=\bigoplus_{i\in \Z/n\Z}M^{(i)}$, together with a $\Kbb$-linear operator $A\in\mathrm{End}(V)$ such that $A M^{(i)}=M_{a}(M^{(i)})\subseteq M^{(i+1)}$, for any $i\in \Z/n\Z$ with $a \in Q_1$ such that $i=s_a$. Then $M$ is nilpotent if and only if $A$ is a nilpotent endomorphism. From now on we write $M_i$ for the map along the arrow $a$ with $i=s_a$.
\end{rem}
\begin{ex}\label{ex:NilpoIndcpDeltan}Let $i\in \Z/n\Z$ and let $\ell\in\Z_{\geq 1}$. Consider the $\Kbb$-vector space $V$ with basis $W = \{w_1, \ldots, w_{\ell}\}$ equipped with the $\Z/n\Z$-grading given by $\deg(w_k)=i+k-\ell \in\Z/n\Z$.  Consider moreover the operator $A\in \mathrm{End}(V)$ uniquely determined by setting $Aw_{k}=w_{k+1}$ for any $k < \ell$ and $Aw_{\ell}=0$. The corresponding $\Delta_n$-representation is immediately seen to be nilpotent. We denote this representation by $U(i;\ell)$.
For $n=4$, we draw $\Delta_4$ and the coefficient quivers of $U(1;4)$ and $U(2;3)$:
\begin{center}
\begin{tikzpicture}[scale=.5]
     \node at (-2.5,0) {$\Delta_4=$};

    \node at (1,1) {$1$};  

	\node at (1,-1) {$2$}; 

	\node at (-1,-1) {$3$}; 
	
	\node at (-1,1) {$4$};   

    \draw[arrows={-angle 90}, shorten >=5, shorten <=5]  (1,1) -- (1,-1); 
	\draw[arrows={-angle 90}, shorten >=5, shorten <=5]  (1,-1) -- (-1,-1); 
    \draw[arrows={-angle 90}, shorten >=5, shorten <=5]  (-1,-1) -- (-1,1); 
    \draw[arrows={-angle 90}, shorten >=5, shorten <=5]  (-1,1) -- (1,1); 

\end{tikzpicture}
$\quad $
\begin{tikzpicture}[scale=.5]
     \node at (-4,0) {$Q(U(1;4),W)=$};

    \draw[fill=black] (1,1) circle (.2);  

	\draw[fill=black] (1,-1) circle (.2); 

	\draw[fill=black] (-1,-1) circle (.2); 
	
	\draw[fill=black] (-1,1) circle (.2);   

    \draw[arrows={-angle 90}, shorten >=3, shorten <=3]  (1,1) -- (1,-1); 
	\draw[arrows={-angle 90}, shorten >=3, shorten <=3]  (1,-1) -- (-1,-1); 
    \draw[arrows={-angle 90}, shorten >=3, shorten <=3]  (-1,-1) -- (-1,1); 

\end{tikzpicture} 
$\quad $
\begin{tikzpicture}[scale=.5]
    \node at (-4,0) {$Q(U(2;3),W)=$};


	\draw[fill=black] (1,-1) circle (.2); 

	\draw[fill=black] (-1,-1) circle (.2); 
	
	\draw[fill=black] (-1,1) circle (.2);   

	\draw[arrows={-angle 90}, shorten >=3, shorten <=3]  (1,-1) -- (-1,-1); 
    \draw[arrows={-angle 90}, shorten >=3, shorten <=3]  (-1,-1) -- (-1,1); 

\end{tikzpicture}
\end{center}
\end{ex}
The following theorem tells us that any indecomposable nilpotent representation of the cycle is isomorphic to some $U(i;\ell)$.
\begin{trm}\label{trm:NilpoIndcpDeltan}(cf.\cite[Theorem 7.6]{Kirillov2016})
\begin{enumerate}
\item The representation $U(i;\ell)$ defined in Example~\ref{ex:NilpoIndcpDeltan} is indecomposable.
\item Let $M$ be an indecomposable nilpotent representation of $\Delta_n$. Then there exist $i\in \Z/n\Z$ and $\ell\in\Z_{>0}$ such that $M\simeq U(i;\ell)$.
\end{enumerate}
\end{trm}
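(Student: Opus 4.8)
The plan is to work with the description of nilpotent representations of $\Delta_n$ as a $\Z/n\Z$-graded vector space $V=\bigoplus_{j}M_j$ together with a nilpotent homogeneous operator $A$ of degree $+1$ (Remark~\ref{rem:nilpoRepDeltanGradedVecSpace}), and to treat the two parts separately; both reduce to linear algebra carried out compatibly with the grading. A decomposition of representations corresponds throughout to an $A$-stable, graded splitting $V=V'\oplus V''$ with $A=A'\oplus A''$.

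For part (1) I would argue via the socle. Writing $U(i;\ell)=(V,A)$ as in Example~\ref{ex:NilpoIndcpDeltan}, a one-line computation gives $\ker A=\langle w_\ell\rangle$, which is one-dimensional. If $U(i;\ell)\cong M'\oplus M''$, then $\ker A=\ker A'\oplus\ker A''$. Since a nonzero finite-dimensional space carrying a nilpotent operator has nonzero kernel, and $\ker A$ is one-dimensional, one of the summands has trivial kernel and is therefore zero. Hence $U(i;\ell)$ is indecomposable.

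For part (2) the key step is to produce a \emph{homogeneous} Jordan basis for $A$. Because $A$ is homogeneous, every $\ker A^k$ is a graded subspace, so the kernel filtration $0\subset\ker A\subset\ker A^2\subset\cdots\subset\ker A^p=V$ consists of graded subspaces. The standard top-down construction of a Jordan basis only requires choosing complements within this filtration and taking preimages under $A$; since all the relevant subspaces are graded, these choices can be made homogeneous, and one obtains a basis of $V$ consisting of chains $w,Aw,\dots,A^{\ell-1}w$ with $w$ homogeneous and $A^{\ell}w=0$. Each such chain spans an $A$-stable graded subspace, i.e.\ a subrepresentation; setting $w_k:=A^{k-1}w$ and comparing degrees (note $\deg(A^{k-1}w)=\deg(w)+k-1$) identifies it with $U(i;\ell)$ for $i=\deg(w)+\ell-1$. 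Thus $M$ is a direct sum of copies of various $U(i;\ell)$, and by Krull--Schmidt (Theorem~\ref{trm:KirillovKSThm}) together with part (1), an indecomposable $M$ must be a single such summand.

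I expect the main obstacle to be the careful verification that the Jordan decomposition can be made homogeneous, namely that homogeneous complements and homogeneous preimages can always be selected at each stage of the construction, together with the degree bookkeeping showing that a length-$\ell$ chain beginning in degree $d$ is precisely $U(d+\ell-1;\ell)$. Everything else, the socle computation, the recognition of chains as subrepresentations, and the final appeal to Krull--Schmidt, is routine.
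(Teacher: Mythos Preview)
The paper does not supply its own proof of this theorem: it is stated with a citation to \cite[Theorem~7.6]{Kirillov2016} and then used as a black box. Your argument is correct and gives a self-contained proof, via the socle for part~(1) and a graded Jordan normal form for part~(2); this is essentially the standard argument one finds in the cited reference, so there is nothing substantive to compare.
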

By the above theorem, together with Theorem~\ref{trm:KirillovKSThm}, we deduce that if $M$ is a nilpotent representation of $\Delta_n$, then there exists a nilpotence parameter $N \in \N$ such that
\begin{equation}\label{eqn:decompNilpo} 
M \cong U(\mb{d}):= \bigoplus_{i \in \Z_n} \bigoplus_{\ell=1}^N U(i;\ell) \otimes \C^{d_{i,\ell}}
 \end{equation}
with $d_{i,\ell} \in \Z_{\geq 0}$.
The investigation of torus actions on quiver Grassmannians for nilpotent representations of $\Delta_n$ is the main purpose of the rest of this paper.

\subsection{Coefficient Quivers for Nilpotent Representations of $\Delta_n$}\label{sec:nilpot-rep-cycle} 
Observe that in Example~\ref{ex:NilpoIndcpDeltan} we defined the representation $U(i;\ell)$ by choosing a basis $B=\{w_1, \ldots, w_{\ell}\}$ of the underlying $\Z/n\Z$-graded vector space. This can be obviously rearranged into the union of ordered bases $B^{(j)}$,  for $j\in\Z/n\Z$. We fix these bases once and for all, and therefore we write $Q(U(i;\ell))$ for $Q(U(i;\ell),B)$. Notice that $Q(U(i;\ell))$ is a segment on $\ell$ points, which starts at vertex $v^{(j)}_1$ (for $j = i-\ell+1 \ \mod \ n$) and ends at the vertex $v^{(i)}_k$ (for $k = 1 + \lfloor (\ell-1)/n \rfloor$).

\begin{ex}
With basis as above, the coefficient quiver of $U(i;\ell)$ has the form:
\begin{center}
\begin{tikzpicture}[scale=.6]

  \node at ($(0,0)+(-306:4.7)$) {\tiny $v^{(j)}_1$};
  \node at ($(0,0)+(-132:1.7)$) {\tiny $v^{(i)}_k$};

\foreach \ang\dist in {-300/4.5, -330/4.4166, 0/4.3333, -30/4.25, -60/4.1666, -90/4.0833, -120/4.0, -150/3.9166, -180/3.8333, -210/3.75, -240/3.6666, -270/3.5833, -300/3.5, -330/3.4166, 0/3.3333, -30/3.25}{
  \draw[fill=black] ($(0,0)+(\ang:2+0.5*\dist)$) circle (.08);
}

\foreach \ang\dist in {-60/2.1666, -90/2.0833, -120/2.0, -150/1.9166, -210/1.75, -240/1.6666, -270/1.5833, -300/1.5, -330/1.4166, -0/1.3333, -30/1.25, -60/1.1666, -90/1.0833, -120/1.0, -150/0.9166, -210/0.75, -240/0.6666, -270/0.5833, -300/0.5, -330/0.4166, 0/0.3333, -30/0.25, -60/0.1666, -90/0.0833, -120/0.0}{
  \draw[fill=black] ($(0,0)+(\ang:2+0.5*\dist)$) circle (.08);
}

\foreach \ang\dist in {-300/4.5, 0/4.3333, -30/4.25, -60/4.1666, -90/4.0833, -120/4.0, -150/3.9166, -210/3.75, -240/3.6666, -270/3.5833, -300/3.5, 0/3.3333}{
  \draw[->,shorten <=1pt, shorten >=0pt] ($(0,0)+(\ang:2+0.5*\dist)$) arc (\ang-1.15:\ang-30:2+0.5*\dist);
}

\foreach \ang\dist in {-60/2.1666, -90/2.0833, -120/2.0, -150/1.9166, -210/1.75, -240/1.6666, -270/1.5833, -300/1.5, -0/1.3333, -30/1.25, -60/1.1666, -90/1.0833, -120/1.0, -150/0.9166 , -210/0.75, -240/0.6666, -270/0.5833, -300/0.5, 0/0.3333, -30/0.25, -60/0.1666, -90/0.0833}{
  \draw[->,shorten <=1pt, shorten >=0pt] ($(0,0)+(\ang:2+0.5*\dist)$) arc (\ang-1.15:\ang-29.5:2+0.5*\dist);
}

\foreach \ind in {1,2,3,4,5,6,7,8,9,10,11,12,13,14,15,16,17,18,19,20,21,22,23,24,25,26,27,28,29,30,31,32,33,34,35,36}{  
  \draw[fill=black] ($(0,0)+(-30-360*\ind/72:3.5433-\ind/144+11/144)$) circle (.015);
  \draw[fill=black] ($(0,0)+(-30-360*\ind/72-180:3.05-\ind/144+44/144)$) circle (.015);
}

\foreach \ind in {1,2,3,4,5,6}{  
  \draw[fill=black] ($(0,0)+(-30-360*\ind/72:3.05-\ind/144+11/144)$) circle (.015);
}

\foreach \ind in {2,3,4}{  
  \draw[fill=black] ($(0,0)+(180-360*\ind/72:3.629-\ind/144+44/144)$) circle (.015);
  \draw[fill=black] ($(0,0)+(180-360*\ind/72:3.629-1-\ind/144+44/144)$) circle (.015);
  \draw[fill=black] ($(0,0)+(180-360*\ind/72:3.629-1.5-\ind/144+44/144)$) circle (.015);
}

\foreach \ind in {2,3,4}{  
  \draw[fill=black] ($(0,0)+(30-360*\ind/72:4.129-\ind/144+11/144)$) circle (.015);
  \draw[fill=black] ($(0,0)+(30-360*\ind/72:4.129-0.5-\ind/144+11/144)$) circle (.015);
  \draw[fill=black] ($(0,0)+(30-360*\ind/72:4.129-1.5-\ind/144+11/144)$) circle (.015);
  \draw[fill=black] ($(0,0)+(30-360*\ind/72:4.129-2-\ind/144+11/144)$) circle (.015);

}

\end{tikzpicture}
\end{center}
\end{ex}
Let $M$ be a nilpotent representation of $\Delta_n$. By the above discussion, Remark~\ref{rem:ConnCompsCoeffQuiver} and Theorem~\ref{trm:NilpoIndcpDeltan}, we deduce that there exists a basis $B$ such that the connected components of the coefficient quiver are segments, parametrised by a terminal vertex $i$ and a length parameter $\ell$. 
Now we want to rearrange these segments in a particular way, which allows us to prove the existence of attractive gradings on the coefficient quiver (see Proposition~\ref{prop:attractiveGradingDeltan}). We use these gradings to compute a cellular decomposition of $\mr{Gr}_\mb{e}(M)$ as in \cite[Theorem~4.13]{Pue2020}. This new arrangement corresponds to a base change for the representation $M$ and hence does not affect the geometry of the quiver Grassmannian (see Remark~\ref{rem:QuiverGrIndptBasis}).
\begin{defi}
A nilpotent $\Delta_n$-representation $M$ is \f{alignable} if there exists a basis $B$, such that for $Q(M,B)$ the following holds over each $i \in \Z_n$:
\begin{itemize}
\item[(QM0)] $B$ is the union of standard basis of the indecomposable direct summands of $M$.
\item[(QM1)] end points of segments have larger indices than every point with outgoing arrows:
if $M_i v^{(i)}_j = 0$ and $M_i v^{(i)}_k \neq 0$, then $ j > k$. 
\item[(QM2)] outgoing arrows are order preserving:\\
if $M_i v^{(i)}_j =  v^{(i+1)}_{j'}$ and $M_i v^{(i)}_k = v^{(i+1)}_{k'}$ with $j>k$, then $ j' > k'$. 
\end{itemize}
\end{defi}
\begin{prop}\label{prop:AlignabilityDeltan}
Every nilpotent $\Delta_n$-representation $M$ is alignable.
\end{prop}
\begin{proof}
All nilpotent $\Delta_n$-representations decompose as in \eqref{eqn:decompNilpo}. For $i\in\Z/n\Z$, set  
\[ d_i := \sum_{\ell=1}^N d_{i,\ell}, \quad r_i := d_i - d_{i,1} \quad q_i := \dim_{\Kbb} M^{(i)} - d_i,\]
so that in the coefficient quiver of $M$ there will be $d_i$ segments ending in vertices corresponding to basis elements in $B^{(i)}$. We construct the coefficient quiver inductively, by truncating and then extending the various segments step-by-step.

In Step 1 we draw the coefficient quiver of 
\[ M^1=\bigoplus_{i \in \Z_n} \bigoplus_{\ell=1}^N U(i;1) \otimes \C^{d_{i,\ell}}, \]
which has no edges, and the vertices are $\{v^{(1)}_{q_1+1}, \ldots, v^{(1)}_{q_1+d_1},\ldots, v^{(n)}_{q_n+1},\ldots, v^{(n)}_{q_n+d_n}\}$.

In step 2 we extend the segments of the $U(i;\ell)$ with $\ell \geq 2$, to get the coefficient quiver of 
\[
M^2=\bigoplus_{i \in \Z_n} \Big(U(i;1) \otimes \C^{d_{i,1}}\oplus\bigoplus_{\ell=2}^N U(i;2) \otimes \C^{d_{i,\ell}}\Big),
\]
so we do not touch the vertices
\( \big\{ v^{(i)}_{q_i+r_i+1}, \ldots, v^{(i)}_{q_i+d_i} \ \vert \ i \in \Z_n\big\} \) corresponding to the $U(i;1)$-segments. 

For $i \in \Z_n$ and $k_i \in [r_i]$ each $v^{(i)}_{q_i+k_i}$ is connected via an edge to $v^{(i-1)}_{q_{i-1}+k_i-r_i}$ in $B^{(i-1)}$. This procedure is continued until all segments are fully rearranged. In the $k$-th step we modify segments corresponding to $U(i;\ell)$ with $\ell\geq k$, while the shorter ones are already complete and remain unchanged. Hence the procedure ends after $N$ steps. 
\end{proof}
\begin{rem}\label{rem:aligned-coeff-quiv} Given a nilpotent representation $M$ as before, the aligned coefficient quiver we have obtained is uniquely determined by the decomposition \eqref{eqn:decompNilpo} of $M$, up to the order of segments of the same length, but this does not change the isomorphism type of the graph. Without any ambiguity, we denote it by $Q(M)$. 
\end{rem}
\begin{rem}
Observe that this is not the only way to obtain an aligned coefficient quiver of $M$. For example 
\begin{center}
\begin{tikzpicture}[scale=0.35]
\draw[fill=black] (0,2) circle (.12);
\draw[arrows={-angle 90}, shorten >=2, shorten <=2]  (0,2) -- (1.5,2);
\draw[fill=black] (1.5,2) circle (.12);
\draw[fill=black] (1.5,1) circle (.12);
\end{tikzpicture}$\quad \mr{and} \quad$
\begin{tikzpicture}[scale=0.35]
\draw[fill=black] (0,2) circle (.12);
\draw[arrows={-angle 90}, shorten >=2, shorten <=2]  (0,2) -- (1.5,2);
\draw[fill=black] (1.5,2) circle (.12);
\draw[fill=black] (1.5,3) circle (.12);
\end{tikzpicture}
\end{center}
are aligned coefficient quivers of the $\Delta_2$-representation $U(2;2)\oplus U(2;1)$. The role of different alignments will be discussed in Example~\ref{ex:fund-mutation}.
The explicit alignment as in Proposition~\ref{prop:AlignabilityDeltan} allows us to prove the existence of attractive gradings by constructing one specific attractive grading of $Q(M)$.
\end{rem}
We hope that an example will make the above construction clear.
\begin{ex}\label{ex:coeffQuiver}
Let $n=4$ and \[M= U(1;4) \oplus U(1;2) \oplus U(2;3) \oplus U(2;2) \oplus U(2;1) \oplus U(4;6).\]
We compute $d_1=2$, $d_2=3$, $d_3=0$ and $d_4=1$.
Following the procedure described above, the coefficient quiver $Q(M)$ is constructed as
\begin{center}
\begin{tikzpicture}[scale=.3]

    \draw[fill=black] (1,1) circle (.2);  
    \draw[fill=black] (2,2) circle (.2);  
	\draw[fill=black] (1,-1) circle (.2); 
	\draw[fill=black] (2,-2) circle (.2); 
	\draw[fill=black] (3,-3) circle (.2); 
	\draw[fill=black] (-1,1) circle (.2);   

    \node at (5,0) {$\longmapsto$};

\end{tikzpicture} 
\begin{tikzpicture}[scale=.3]

    \draw[fill=black] (1,1) circle (.2);  
    \draw[fill=black] (2,2) circle (.2);  
    \draw[fill=black] (3,3) circle (.2);  
    \draw[fill=black] (4,4) circle (.2);  
	\draw[fill=black] (1,-1) circle (.2); 
	\draw[fill=black] (2,-2) circle (.2); 
	\draw[fill=black] (3,-3) circle (.2); 
	\draw[fill=black] (-1,-1) circle (.2); 
	\draw[fill=black] (-1,1) circle (.2);   
	\draw[fill=black] (-2,2) circle (.2);   
	\draw[fill=black] (-3,3) circle (.2);   
	
	\draw[arrows={-angle 90}, shorten >=3, shorten <=3]  (4,4) -- (3,-3); 
	\draw[arrows={-angle 90}, shorten >=3, shorten <=3]  (3,3) -- (2,-2); 

    
    \draw[arrows={-angle 90}, shorten >=3, shorten <=3]  (-1,-1) -- (-1,1); 
    
    \draw[arrows={-angle 90}, shorten >=3, shorten <=3]  (-3,3) -- (2,2); 
    \draw[arrows={-angle 90}, shorten >=3, shorten <=3]  (-2,2) -- (1,1); 
    
    \node at (6,0) {$\longmapsto$};

\end{tikzpicture} 
\begin{tikzpicture}[scale=.3]

    \draw[fill=black] (1,1) circle (.2);  
    \draw[fill=black] (2,2) circle (.2);  
    \draw[fill=black] (3,3) circle (.2);  
    \draw[fill=black] (4,4) circle (.2);  
	\draw[fill=black] (1,-1) circle (.2); 
	\draw[fill=black] (2,-2) circle (.2); 
	\draw[fill=black] (3,-3) circle (.2); 
	\draw[fill=black] (4,-4) circle (.2); 
	\draw[fill=black] (-1,-1) circle (.2); 
	\draw[fill=black] (-2,-2) circle (.2); 
	\draw[fill=black] (-1,1) circle (.2);   
	\draw[fill=black] (-2,2) circle (.2);   
	\draw[fill=black] (-3,3) circle (.2);   
	\draw[fill=black] (-4,4) circle (.2);   
	
	\draw[arrows={-angle 90}, shorten >=3, shorten <=3]  (4,4) -- (3,-3); 
	\draw[arrows={-angle 90}, shorten >=3, shorten <=3]  (3,3) -- (2,-2); 

    \draw[arrows={-angle 90}, shorten >=3, shorten <=3]  (4,-4) -- (-1,-1); 
    
    \draw[arrows={-angle 90}, shorten >=3, shorten <=3]  (-1,-1) -- (-1,1); 
    \draw[arrows={-angle 90}, shorten >=3, shorten <=3]  (-2,-2) -- (-3,3); 
    
    \draw[arrows={-angle 90}, shorten >=3, shorten <=3]  (-3,3) -- (2,2); 
    \draw[arrows={-angle 90}, shorten >=3, shorten <=3]  (-2,2) -- (1,1); 
    \draw[arrows={-angle 90}, shorten >=3, shorten <=3]  (-4,4) -- (4,4); 

\end{tikzpicture} 
\begin{tikzpicture}[scale=.3]
	\node at (-6,0) {$\longmapsto$};

    \draw[fill=black] (1,1) circle (.2);  
    \draw[fill=black] (2,2) circle (.2);  
    \draw[fill=black] (3,3) circle (.2);  
    \draw[fill=black] (4,4) circle (.2);  
    \draw[fill=black] (5,5) circle (.2);  
	\draw[fill=black] (1,-1) circle (.2); 
	\draw[fill=black] (2,-2) circle (.2); 
	\draw[fill=black] (3,-3) circle (.2); 
	\draw[fill=black] (4,-4) circle (.2); 
	\draw[fill=black] (5,-5) circle (.2); 
	\draw[fill=black] (-1,-1) circle (.2); 
	\draw[fill=black] (-2,-2) circle (.2); 
	\draw[fill=black] (-1,1) circle (.2);   
	\draw[fill=black] (-2,2) circle (.2);   
	\draw[fill=black] (-3,3) circle (.2);   
	\draw[fill=black] (-4,4) circle (.2);   
	
	\draw[arrows={-angle 90}, shorten >=3, shorten <=3]  (5,5) -- (4,-4); 
	\draw[arrows={-angle 90}, shorten >=3, shorten <=3]  (4,4) -- (3,-3); 
	\draw[arrows={-angle 90}, shorten >=3, shorten <=3]  (3,3) -- (2,-2); 

    \draw[arrows={-angle 90}, shorten >=3, shorten <=3]  (5,-5) -- (-2,-2); 
    \draw[arrows={-angle 90}, shorten >=3, shorten <=3]  (4,-4) -- (-1,-1); 
    
    \draw[arrows={-angle 90}, shorten >=3, shorten <=3]  (-1,-1) -- (-1,1); 
    \draw[arrows={-angle 90}, shorten >=3, shorten <=3]  (-2,-2) -- (-3,3); 
    
    \draw[arrows={-angle 90}, shorten >=3, shorten <=3]  (-3,3) -- (2,2); 
    \draw[arrows={-angle 90}, shorten >=3, shorten <=3]  (-2,2) -- (1,1); 
    \draw[arrows={-angle 90}, shorten >=3, shorten <=3]  (-4,4) -- (4,4); 

\end{tikzpicture} 

$ $

$ $

\begin{tikzpicture}[scale=.3]
	\node at (-6,0) {$\longmapsto$};

    \draw[fill=black] (1,1) circle (.2);  
    \draw[fill=black] (2,2) circle (.2);  
    \draw[fill=black] (3,3) circle (.2);  
    \draw[fill=black] (4,4) circle (.2);  
    \draw[fill=black] (5,5) circle (.2);  
	\draw[fill=black] (1,-1) circle (.2); 
	\draw[fill=black] (2,-2) circle (.2); 
	\draw[fill=black] (3,-3) circle (.2); 
	\draw[fill=black] (4,-4) circle (.2); 
	\draw[fill=black] (5,-5) circle (.2); 
	\draw[fill=black] (-1,-1) circle (.2); 
	\draw[fill=black] (-2,-2) circle (.2); 
	\draw[fill=black] (-1,1) circle (.2);   
	\draw[fill=black] (-2,2) circle (.2);   
	\draw[fill=black] (-3,3) circle (.2);   
	\draw[fill=black] (-4,4) circle (.2);   
	\draw[fill=black] (-5,5) circle (.2);   
	
	\draw[arrows={-angle 90}, shorten >=3, shorten <=3]  (5,5) -- (4,-4); 
	\draw[arrows={-angle 90}, shorten >=3, shorten <=3]  (4,4) -- (3,-3); 
	\draw[arrows={-angle 90}, shorten >=3, shorten <=3]  (3,3) -- (2,-2); 

    \draw[arrows={-angle 90}, shorten >=3, shorten <=3]  (5,-5) -- (-2,-2); 
    \draw[arrows={-angle 90}, shorten >=3, shorten <=3]  (4,-4) -- (-1,-1); 
    
    \draw[arrows={-angle 90}, shorten >=3, shorten <=3]  (-2,-2) -- (-3,3); 
    \draw[arrows={-angle 90}, shorten >=3, shorten <=3]  (-1,-1) -- (-1,1); 
       
    \draw[arrows={-angle 90}, shorten >=3, shorten <=3]  (-5,5) -- (5,5); 
    \draw[arrows={-angle 90}, shorten >=3, shorten <=3]  (-4,4) -- (4,4); 
    \draw[arrows={-angle 90}, shorten >=3, shorten <=3]  (-3,3) -- (2,2); 
    \draw[arrows={-angle 90}, shorten >=3, shorten <=3]  (-2,2) -- (1,1); 
    
\end{tikzpicture} 
\begin{tikzpicture}[scale=.3]
	\node at (-6,0) {$\longmapsto$};

    \draw[fill=black] (1,1) circle (.2);  
    \draw[fill=black] (2,2) circle (.2);  
    \draw[fill=black] (3,3) circle (.2);  
    \draw[fill=black] (4,4) circle (.2);  
    \draw[fill=black] (5,5) circle (.2);  
	\draw[fill=black] (1,-1) circle (.2); 
	\draw[fill=black] (2,-2) circle (.2); 
	\draw[fill=black] (3,-3) circle (.2); 
	\draw[fill=black] (4,-4) circle (.2); 
	\draw[fill=black] (5,-5) circle (.2); 
	\draw[fill=black] (-1,-1) circle (.2); 
	\draw[fill=black] (-2,-2) circle (.2); 
	\draw[fill=black] (-3,-3) circle (.2); 
	\draw[fill=black] (-1,1) circle (.2);   
	\draw[fill=black] (-2,2) circle (.2);   
	\draw[fill=black] (-3,3) circle (.2);   
	\draw[fill=black] (-4,4) circle (.2);   
	\draw[fill=black] (-5,5) circle (.2);   
	
	\draw[arrows={-angle 90}, shorten >=3, shorten <=3]  (5,5) -- (4,-4); 
	\draw[arrows={-angle 90}, shorten >=3, shorten <=3]  (4,4) -- (3,-3); 
	\draw[arrows={-angle 90}, shorten >=3, shorten <=3]  (3,3) -- (2,-2); 

    \draw[arrows={-angle 90}, shorten >=3, shorten <=3]  (5,-5) -- (-2,-2); 
    \draw[arrows={-angle 90}, shorten >=3, shorten <=3]  (4,-4) -- (-1,-1); 
    
    \draw[arrows={-angle 90}, shorten >=3, shorten <=3]  (-3,-3) -- (-5,5); 
    \draw[arrows={-angle 90}, shorten >=3, shorten <=3]  (-2,-2) -- (-3,3); 
    \draw[arrows={-angle 90}, shorten >=3, shorten <=3]  (-1,-1) -- (-1,1); 
       
    \draw[arrows={-angle 90}, shorten >=3, shorten <=3]  (-5,5) -- (5,5); 
    \draw[arrows={-angle 90}, shorten >=3, shorten <=3]  (-4,4) -- (4,4); 
    \draw[arrows={-angle 90}, shorten >=3, shorten <=3]  (-3,3) -- (2,2); 
    \draw[arrows={-angle 90}, shorten >=3, shorten <=3]  (-2,2) -- (1,1); 
    
\end{tikzpicture} 
\begin{tikzpicture}[scale=.3]
	\node at (-6,0) { with labels};

    \node at (1,1) {\tiny $v^{(1)}_5$};  
    \node at (2,2) {\tiny $v^{(1)}_4$};  
    \node at (3,3) {\tiny $v^{(1)}_3$};  
    \node at (4,4) {\tiny $v^{(1)}_2$};  
    \node at (5,5) {\tiny $v^{(1)}_1$}; 
	\node at (1,-1) {\tiny $v^{(2)}_5$}; 
	\node at (2,-2) {\tiny $v^{(2)}_4$}; 
	\node at (3,-3) {\tiny $v^{(2)}_3$}; 
	\node at (4,-4) {\tiny $v^{(2)}_2$}; 
	\node at (5,-5) {\tiny $v^{(2)}_1$}; 
	\node at (-1,-1) {\tiny $v^{(3)}_3$}; 
	\node at (-2,-2) {\tiny $v^{(3)}_2$}; 
	\node at (-3,-3) {\tiny $v^{(3)}_1$}; 
	\node at (-1,1) {\tiny $v^{(4)}_5$};   
	\node at (-2,2) {\tiny $v^{(4)}_4$};   
	\node at (-3,3) {\tiny $v^{(4)}_3$};   
	\node at (-4,4) {\tiny $v^{(4)}_2$};   
	\node at (-5,5) {\tiny $v^{(4)}_1$};   
	

    
       
    
\end{tikzpicture}
\end{center}
\end{ex}
\section{Torus Actions}\label{sec:TorusAction}
\subsection{$\C^*$-action and Cellular Decomposition}
\label{sec:C*-action}
Now we describe one explicit attractive grading of the coefficient quiver of any nilpotent representation of the cycle.
\begin{prop}\label{prop:attractiveGradingDeltan}
Let $M$ be a nilpotent representation of $\Delta_n$ with decomposition as in \eqref{eqn:decompNilpo}. 
There exists an attractive grading of $Q(M)$ with (constant) weight function on the edges given by 
\[d(a):= D := \mr{max}\{ d_i - d_{i,1} \ \vert \ i \in \Z_n\}\qquad \hbox{ for all edges } a \in \Z/n\Z.
\] 
\end{prop}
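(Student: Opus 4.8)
The plan is to turn both axioms into a single problem about choosing one integer per connected component. Recall from Remark~\ref{rem:ConnCompsCoeffQuiver} and Theorem~\ref{trm:NilpoIndcpDeltan} that every connected component of $Q(M)$ is a segment all of whose arrows point towards its (unique) sink, the terminal vertex; for a vertex $v$ in a segment $S$ of length $\ell$ sitting in position $p$ put $\mr{rem}(v):=\ell-p\in\Z_{\ge 0}$, the number of arrows between $v$ and the terminal of $S$. I would then make the ansatz
\[
\mr{wt}(v):=\beta(S)-D\cdot\mr{rem}(v),
\]
where $S$ is the component of $v$ and $\beta(S)\in\Z$ is a weight attached to $S$ (equivalently, the weight of its terminal vertex), to be fixed later. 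Because $\mr{rem}$ decreases by exactly $1$ along each arrow, (AG2) holds automatically with $d(a)=D$ for \emph{every} arrow $a$, so the entire statement reduces to realising (AG1).

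To control (AG1) I first need to understand how the index $k$ orders the vertices of a fixed $B^{(i)}$. I would extract from the construction of $Q(M)$ in \S\ref{sec:nilpot-rep-cycle} (properties (QM1), (QM2) and the way the segments are prolonged one step at a time) the following combinatorial description, which can be read off the example after Remark~\ref{rem:aligned-coeff-quiv}: at each level $i$ the index $k$ lists the vertices of $B^{(i)}$ in decreasing lexicographic order of the pair $(\mr{rem},\ell)$; that is, a larger $\mr{rem}$ gives a smaller $k$, and among vertices with the same $\mr{rem}$ (equivalently, the same terminal level $j=i+\mr{rem}\ \mr{mod}\ n$) the longer segment gets the smaller $k$. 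In particular the $d_i$ terminals of level $i$ (those with $\mr{rem}=0$) occupy the top indices $q_i+1,\dots,q_i+d_i$. Under the ansatz, (AG1) then splits into two families of inequalities on the $\beta$'s: (P2) for two vertices of the same level and same $\mr{rem}$, the longer segment must receive the smaller $\beta$; and (P1) for two vertices of the same level with $\mr{rem}(v)>\mr{rem}(v')$, one needs $\beta(S_v)-\beta(S_{v'})<D\big(\mr{rem}(v)-\mr{rem}(v')\big)$.

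Condition (P2) only involves segments with a common terminal level $j$, so I would satisfy it by declaring $\beta(S):=b_j+\mr{rank}(S)$, where $b_j\in\Z$ is a base to be chosen and $\mr{rank}(S)\in\{0,\dots,d_j-1\}$ numbers the $d_j$ segments terminating at $j$ from longest (rank $0$) to shortest (rank $d_j-1$), breaking ties between equal lengths arbitrarily (this is exactly the freedom allowed by Remark~\ref{rem:aligned-coeff-quiv}). With this choice the tightest instances of (P1) are those with $\mr{rem}(v)-\mr{rem}(v')=1$; since for $\mr{rem}(v)\ge 1$ the vertex $v$ cannot sit on a length-$1$ summand, the relevant rank is at most $d_j-d_{j,1}-1$, and a short computation turns every instance of (P1) into the single cyclic system
\[
b_j-b_{j-1}\le D-(d_j-d_{j,1})\qquad(j\in\Z_n).
\]

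The last step is to solve this system over $\Z$, and here the choice $D=\max_i(d_i-d_{i,1})$ is used: each right-hand side $D-(d_j-d_{j,1})$ is $\ge 0$, and their sum around the cycle equals $nD-\sum_{j}(d_j-d_{j,1})\ge 0$ precisely because the maximum dominates the average. A cyclic chain of difference constraints $b_j-b_{j-1}\le c_j$ is solvable exactly when its unique cycle sum is non-negative, so an integer solution $(b_j)$ exists; feeding it back through the ansatz produces the desired grading. I expect the two genuinely delicate points to be, first, the rigorous derivation of the $(\mr{rem},\ell)$-ordering from the inductive construction, and second, checking (P1) for \emph{all} $\mr{rem}$-gaps rather than only consecutive ones — in particular for segments of length $>n$, which wind around $\Delta_n$ and revisit a level at two positions whose weights are forced to differ by a multiple of $nD$; one has to verify that the other segments threaded in between still fit, which again comes down to the same inequalities $D\ge d_j-d_{j,1}$ holding simultaneously at every level.
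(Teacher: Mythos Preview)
Your approach is correct and genuinely different from the paper's. The paper writes down an explicit grading: it fixes a vertex $i_0$ where the number of maximal-length segments is largest, assigns weight $1$ to the start of one such segment, then gives a closed formula for the weight of every endpoint,
\[
\mr{wt}\big(v_{q_i+p}^{(i)}\big)=k+p-1+d_{i,1}-d_i,
\]
and propagates by (AG2). Attractiveness is then verified by an induction on the truncation length $s$, comparing a newly added vertex at level $i$ either with another non-endpoint (handled via the successor at level $i+1$) or with an adjacent endpoint (handled by a direct estimate using $D\ge d_i-d_{i,1}$).

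Your route instead parametrises all (AG2)-compatible gradings by one integer $\beta(S)$ per segment, factors $\beta(S)=b_j+\mr{rank}(S)$, and reduces (AG1) to the feasibility of a cyclic difference system. This is more structural: it isolates exactly where the hypothesis $D=\max_i(d_i-d_{i,1})$ enters (non-negativity of the cycle sum), and it shows that the paper's formula is just one solution of your system, namely $b_i=k+d_{i,1}-d_i$. What the paper's approach buys is concreteness: no appeal to feasibility of constraint systems, and the ordering of $B^{(i)}$ is never stated as a separate lemma because the induction on $s$ processes vertices in exactly the order they are created.

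Your two self-identified delicate points are real but tractable. The $(\mr{rem},\ell)$-ordering follows directly from the step-by-step construction: step $s$ adds precisely the vertices with $\mr{rem}=s-1$, it fills the highest remaining indices at each level, and within a step the order is inherited from the successors at level $i+1$, hence ultimately from (QM2). For the second point, note that gap-$1$ pairs need not exist at every level; your system $b_j-b_{j-1}\le D-(d_j-d_{j,1})$ should be read as a \emph{sufficient} condition, and the general (P1) for $\mr{rem}$-gap $r$ follows by telescoping the $r$ consecutive inequalities along the cycle (the wrap-around case $r\ge n$ uses that the full cycle sum is non-negative, together with $\mr{rank}(S_h)\le d_{j_h}-d_{j_h,1}-1$ to recover strictness).
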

\begin{proof} Take $i_0 \in \Z_n$ so that $d_{i_0,N} \geq d_{i,N}$ for all $i \in \Z_n$, this is equivalent to picking a vertex of $\Delta_n$ such that the number of segments of length $N$ ending on that vertex is maximal. This choice is not unique and indeed the grading depends on it. 

Set $j_0 := i_0 -N +1 \mod n$ and define $\mr{wt}(v_1^{(j_0)}):=1$. Condition (AG2), with $d(a)$ as in the statement of the proposition, uniquely determines the weights on any vertex belonging to the (length $N$) segment starting in $v_1^{(j_0)}$. Notice that such a segment ends in $v^{(i_0)}_{q_{i_0}+1}$ (cf.(QM1)). Observe that (AG2) implies in particular $w:=\mr{wt}(v^{(i_0)}_{q_{i_0}+1})=1+D(N-1)$.
Next, we let $k:=w+d_{i_0}-d_{i_0,1}$ and, for any $i\in\Z/n\Z$, set
\[ \mr{wt}\big(v_{q_{i}+p}^{(i)}\big) := k+p-1+ d_{i,1} -d_{i}, \qquad \hbox{ for all } p \in [d_i]. \]
Observe that if $i=i_0$ and $p=1$ we obtain the already defined weight of $v^{(i_0)}_{q_{i_0}+1}$. This formula allows to compute the weight of the end point of any segment. The remaining weights are determined by imposing (AG2).  

To conclude, we have to show the attractiveness of the above defined grading. Observe that by definition of the grading, (AG2) is automatically satisfied. Therefore, we only have to make sure that also (AG1) holds, that is $\mr{wt}(v_{h+1}^{(i)})>\mr{wt}(v_{h}^{(i)})$ for any $i \in \Z/n\/Z$ and any $h < \dim_\Kbb M^{(i)}$. We prove this by induction on the length $s$ of the segments in
$Q(M)$, just as we did in the construction of $Q(M)$. If we restrict to the vertices belonging to $B^{(i)}$ for some $i\in \Z/n\Z$, it is clear for the end points that all weights are distinct and strictly increasing with the indices of the basis vectors. We hence assume that if we consider the truncated representation $M^{s-1}$ for $s> 1$ and restrict the grading to its coefficient quiver $Q(M^{s-1})$, we obtain an attractive grading. 

Recall that to get $Q(M^{s})$ we have to add an arrow and its starting point to all segments corresponding to isotypical components $U(i;\ell)$ with $\ell\geq s$. 
Consider $v_{h+1}^{(i)}, v_{h}^{(i)}\in B^{(i)}$ such that they both are vertices of $Q(M^s)$ and $v_{h}^{(i)}\not\in Q(M^{s-1})_0$ (otherwise the claim follows immediately by induction). If $v_{h+1}^{(i)}$ is not an end point, then $v_{h}^{(i)}$ and $v_{h+1}^{(i)}$ are sources of two arrows, say $a_1$ and $a_2$, respectively, whose targets lie in $B^{(i+1)}$ and by induction $\mr{wt}(t_{a_1})<\mr{wt}(t_{a_2})$. Thus, by (AG2),
 \[\mr{wt}\big(v_{h+1}^{(i)}\big)=\mr{wt}(t_{a_2})-D>\mr{wt}(t_{a_1})-D=\mr{wt}\big(v_h^{(i)}\big).\]
Assume now that $v_{h+1}^{(i)}$ is an end point. In this case, as $v_h^{(i)}$ is not an end point, we have $h=q_i$ and hence $\mr{wt}(v_{h+1}^{(i)})=k+d_{i,1}-d_{i}$. Now recall that $v_h^{(i)}$ belongs to a segment whose end point is $v^{(j)}_{q_j+p}\in B^{(j)}$ with $j=i+s-1 \mod n$ and $p\in[d_j-d_{j,1}]$, so that
\begin{align*}
\mr{wt}\big(v_h^{(i)}\big)=\mr{wt}\big(v^{(j)}_{q_j+p}\big)-D(s-1)&=k+p-1+d_{j,1}-d_j-D(s-1)\\
&\leq k-1-D(s-1).    
\end{align*}

The claim now follows from the fact that $D\geq d_{i}-d_{i,1}$ and $s-1\geq 1$.
\end{proof}

\begin{ex}
We compute the weights for the vertices in the coefficient quiver $Q(M)$ of the representation $M$ from Example~\ref{ex:coeffQuiver}. The edge weight is $D=2$. We now determine the attractive grading, following the procedure described in the proof of Proposition~\ref{prop:attractiveGradingDeltan}. There is a unique segment of length $N=6$, which corresponds to the subrepresentation $U(4;6)$, and hence we take $i_0=4$ and compute $d_{i_0}-d_{i_{0,1}}=1$.
\begin{center}
\begin{tikzpicture}[scale=.28]

    \draw[fill=black] (1,1) circle (.2);  
    \draw[fill=black] (2,2) circle (.2);  
    \draw[fill=black] (3,3) circle (.2);  
    \draw[fill=black] (4,4) circle (.2);  
    \draw[fill=black] (5,5) circle (.2);  
	\draw[fill=black] (1,-1) circle (.2); 
	\draw[fill=black] (2,-2) circle (.2); 
	\draw[fill=black] (3,-3) circle (.2); 
	\draw[fill=black] (4,-4) circle (.2); 
	\draw[fill=black] (5,-5) circle (.2); 
	\draw[fill=black] (-1,-1) circle (.2); 
	\draw[fill=black] (-2,-2) circle (.2); 
	\node at (-3,-3) {\tiny $1$};
	\draw[fill=black] (-1,1) circle (.2);   
	\draw[fill=black] (-2,2) circle (.2);   
	\draw[fill=black] (-3,3) circle (.2);   
	\draw[fill=black] (-4,4) circle (.2);   
	\draw[fill=black] (-5,5) circle (.2);   
	
	\draw[arrows={-angle 90}, shorten >=3, shorten <=3]  (5,5) -- (4,-4); 
	\draw[arrows={-angle 90}, shorten >=3, shorten <=3]  (4,4) -- (3,-3); 
	\draw[arrows={-angle 90}, shorten >=3, shorten <=3]  (3,3) -- (2,-2); 

    \draw[arrows={-angle 90}, shorten >=3, shorten <=3]  (5,-5) -- (-2,-2); 
    \draw[arrows={-angle 90}, shorten >=3, shorten <=3]  (4,-4) -- (-1,-1); 
    
    \draw[arrows={-angle 90}, shorten >=3, shorten <=3]  (-3,-3) -- (-5,5); 
    \draw[arrows={-angle 90}, shorten >=3, shorten <=3]  (-2,-2) -- (-3,3); 
    \draw[arrows={-angle 90}, shorten >=3, shorten <=3]  (-1,-1) -- (-1,1); 
       
    \draw[arrows={-angle 90}, shorten >=3, shorten <=3]  (-5,5) -- (5,5); 
    \draw[arrows={-angle 90}, shorten >=3, shorten <=3]  (-4,4) -- (4,4); 
    \draw[arrows={-angle 90}, shorten >=3, shorten <=3]  (-3,3) -- (2,2); 
    \draw[arrows={-angle 90}, shorten >=3, shorten <=3]  (-2,2) -- (1,1); 
    
\end{tikzpicture} 
\begin{tikzpicture}[scale=.28]
	\node at (-6,0) {$\longmapsto$};

    \draw[fill=black] (1,1) circle (.2);  
    \draw[fill=black] (2,2) circle (.2);  
    \draw[fill=black] (3,3) circle (.2);  
    \draw[fill=black] (4,4) circle (.2);  
    \node at (5,5) {\tiny $5$};
	\draw[fill=black] (1,-1) circle (.2); 
	\draw[fill=black] (2,-2) circle (.2); 
	\draw[fill=black] (3,-3) circle (.2); 
	\node at (4,-4) {\tiny $7$};
	\draw[fill=black] (5,-5) circle (.2); 
	\node at (-1,-1) {\tiny $9$};
	\draw[fill=black] (-2,-2) circle (.2); 
	\node at (-3,-3) {\tiny $1$};
	\node at (-1,1) {\tiny $11$};
	\draw[fill=black] (-2,2) circle (.2);   
	\draw[fill=black] (-3,3) circle (.2);   
	\draw[fill=black] (-4,4) circle (.2);   
	\node at (-5,5) {\tiny $3$};
	
	\draw[arrows={-angle 90}, shorten >=3, shorten <=3]  (5,5) -- (4,-4); 
	\draw[arrows={-angle 90}, shorten >=3, shorten <=3]  (4,4) -- (3,-3); 
	\draw[arrows={-angle 90}, shorten >=3, shorten <=3]  (3,3) -- (2,-2); 

    \draw[arrows={-angle 90}, shorten >=3, shorten <=3]  (5,-5) -- (-2,-2); 
    \draw[arrows={-angle 90}, shorten >=3, shorten <=3]  (4,-4) -- (-1,-1); 
    
    \draw[arrows={-angle 90}, shorten >=3, shorten <=3]  (-3,-3) -- (-5,5); 
    \draw[arrows={-angle 90}, shorten >=3, shorten <=3]  (-2,-2) -- (-3,3); 
    \draw[arrows={-angle 90}, shorten >=3, shorten <=3]  (-1,-1) -- (-1,1); 
       
    \draw[arrows={-angle 90}, shorten >=3, shorten <=3]  (-5,5) -- (5,5); 
    \draw[arrows={-angle 90}, shorten >=3, shorten <=3]  (-4,4) -- (4,4); 
    \draw[arrows={-angle 90}, shorten >=3, shorten <=3]  (-3,3) -- (2,2); 
    \draw[arrows={-angle 90}, shorten >=3, shorten <=3]  (-2,2) -- (1,1); 
    
\end{tikzpicture} 
\begin{tikzpicture}[scale=.28]
	\node at (-6,0) {$\longmapsto$};

    \node at (1,1) {\tiny $11$};
    \node at (2,2) {\tiny $10$};
    \draw[fill=black] (3,3) circle (.2);  
    \draw[fill=black] (4,4) circle (.2);  
    \node at (5,5) {\tiny $5$};
	\node at (1,-1) {\tiny $12$};
	\node at (2,-2) {\tiny $11$};
	\node at (3,-3) {\tiny $10$};
	\node at (4,-4) {\tiny $7$};
	\draw[fill=black] (5,-5) circle (.2); 
	\node at (-1,-1) {\tiny $9$};
	\draw[fill=black] (-2,-2) circle (.2); 
	\node at (-3,-3) {\tiny $1$};
	\node at (-1,1) {\tiny $11$};
	\draw[fill=black] (-2,2) circle (.2);   
	\draw[fill=black] (-3,3) circle (.2);   
	\draw[fill=black] (-4,4) circle (.2);   
	\node at (-5,5) {\tiny $3$};
	
	\draw[arrows={-angle 90}, shorten >=3, shorten <=3]  (5,5) -- (4,-4); 
	\draw[arrows={-angle 90}, shorten >=3, shorten <=3]  (4,4) -- (3,-3); 
	\draw[arrows={-angle 90}, shorten >=3, shorten <=3]  (3,3) -- (2,-2); 

    \draw[arrows={-angle 90}, shorten >=3, shorten <=3]  (5,-5) -- (-2,-2); 
    \draw[arrows={-angle 90}, shorten >=3, shorten <=3]  (4,-4) -- (-1,-1); 
    
    \draw[arrows={-angle 90}, shorten >=3, shorten <=3]  (-3,-3) -- (-5,5); 
    \draw[arrows={-angle 90}, shorten >=3, shorten <=3]  (-2,-2) -- (-3,3); 
    \draw[arrows={-angle 90}, shorten >=3, shorten <=3]  (-1,-1) -- (-1,1); 
       
    \draw[arrows={-angle 90}, shorten >=3, shorten <=3]  (-5,5) -- (5,5); 
    \draw[arrows={-angle 90}, shorten >=3, shorten <=3]  (-4,4) -- (4,4); 
    \draw[arrows={-angle 90}, shorten >=3, shorten <=3]  (-3,3) -- (2,2); 
    \draw[arrows={-angle 90}, shorten >=3, shorten <=3]  (-2,2) -- (1,1); 
    
\end{tikzpicture} 

$ $

\begin{tikzpicture}[scale=.28]
	\node at (-6,0) {$\longmapsto$};

    \node at (1,1) {\tiny $11$};
    \node at (2,2) {\tiny $10$};
    \node at (3,3) {\tiny $9$};
    \node at (4,4) {\tiny $8$};
    \node at (5,5) {\tiny $5$};
	\node at (1,-1) {\tiny $12$};
	\node at (2,-2) {\tiny $11$};
	\node at (3,-3) {\tiny $10$};
	\node at (4,-4) {\tiny $7$};
	\draw[fill=black] (5,-5) circle (.2); 
	\node at (-1,-1) {\tiny $9$};
	\draw[fill=black] (-2,-2) circle (.2); 
	\node at (-3,-3) {\tiny $1$}; 
	\node at (-1,1) {\tiny $11$};
	\node at (-2,2) {\tiny $9$};
	\node at (-3,3) {\tiny $8$};
	\draw[fill=black] (-4,4) circle (.2);   
	\node at (-5,5) {\tiny $3$};

	\draw[arrows={-angle 90}, shorten >=3, shorten <=3]  (5,5) -- (4,-4); 
	\draw[arrows={-angle 90}, shorten >=3, shorten <=3]  (4,4) -- (3,-3); 
	\draw[arrows={-angle 90}, shorten >=3, shorten <=3]  (3,3) -- (2,-2); 

    \draw[arrows={-angle 90}, shorten >=3, shorten <=3]  (5,-5) -- (-2,-2); 
    \draw[arrows={-angle 90}, shorten >=3, shorten <=3]  (4,-4) -- (-1,-1); 
    
    \draw[arrows={-angle 90}, shorten >=3, shorten <=3]  (-3,-3) -- (-5,5); 
    \draw[arrows={-angle 90}, shorten >=3, shorten <=3]  (-2,-2) -- (-3,3); 
    \draw[arrows={-angle 90}, shorten >=3, shorten <=3]  (-1,-1) -- (-1,1); 
       
    \draw[arrows={-angle 90}, shorten >=3, shorten <=3]  (-5,5) -- (5,5); 
    \draw[arrows={-angle 90}, shorten >=3, shorten <=3]  (-4,4) -- (4,4); 
    \draw[arrows={-angle 90}, shorten >=3, shorten <=3]  (-3,3) -- (2,2); 
    \draw[arrows={-angle 90}, shorten >=3, shorten <=3]  (-2,2) -- (1,1); 
    
\end{tikzpicture} 
\begin{tikzpicture}[scale=.28]
	\node at (-6,0) {$\longmapsto$};

    \node at (1,1) {\tiny $11$};
    \node at (2,2) {\tiny $10$};
    \node at (3,3) {\tiny $9$};
    \node at (4,4) {\tiny $8$};
    \node at (5,5) {\tiny $5$};
	\node at (1,-1) {\tiny $12$};
	\node at (2,-2) {\tiny $11$};
	\node at (3,-3) {\tiny $10$};
	\node at (4,-4) {\tiny $7$};
	\draw[fill=black] (5,-5) circle (.2); 
	\node at (-1,-1) {\tiny $9$};
	\node at (-2,-2) {\tiny $6$}; 
	\node at (-3,-3) {\tiny $1$}; 
	\node at (-1,1) {\tiny $11$};
	\node at (-2,2) {\tiny $9$};
	\node at (-3,3) {\tiny $8$};
	\node at (-4,4) {\tiny $6$};   
	\node at (-5,5) {\tiny $3$};

	\draw[arrows={-angle 90}, shorten >=3, shorten <=3]  (5,5) -- (4,-4); 
	\draw[arrows={-angle 90}, shorten >=3, shorten <=3]  (4,4) -- (3,-3); 
	\draw[arrows={-angle 90}, shorten >=3, shorten <=3]  (3,3) -- (2,-2); 

    \draw[arrows={-angle 90}, shorten >=3, shorten <=3]  (5,-5) -- (-2,-2); 
    \draw[arrows={-angle 90}, shorten >=3, shorten <=3]  (4,-4) -- (-1,-1); 
    
    \draw[arrows={-angle 90}, shorten >=3, shorten <=3]  (-3,-3) -- (-5,5); 
    \draw[arrows={-angle 90}, shorten >=3, shorten <=3]  (-2,-2) -- (-3,3); 
    \draw[arrows={-angle 90}, shorten >=3, shorten <=3]  (-1,-1) -- (-1,1); 
       
    \draw[arrows={-angle 90}, shorten >=3, shorten <=3]  (-5,5) -- (5,5); 
    \draw[arrows={-angle 90}, shorten >=3, shorten <=3]  (-4,4) -- (4,4); 
    \draw[arrows={-angle 90}, shorten >=3, shorten <=3]  (-3,3) -- (2,2); 
    \draw[arrows={-angle 90}, shorten >=3, shorten <=3]  (-2,2) -- (1,1); 
    
\end{tikzpicture} 
\begin{tikzpicture}[scale=.28]
	\node at (-6,0) {$\longmapsto$};

    \node at (1,1) {\tiny $11$};
    \node at (2,2) {\tiny $10$};
    \node at (3,3) {\tiny $9$};
    \node at (4,4) {\tiny $8$};
    \node at (5,5) {\tiny $5$};
	\node at (1,-1) {\tiny $12$};
	\node at (2,-2) {\tiny $11$};
	\node at (3,-3) {\tiny $10$};
	\node at (4,-4) {\tiny $7$};
	\node at (5,-5) {\tiny $4$}; 
	\node at (-1,-1) {\tiny $9$};
	\node at (-2,-2) {\tiny $6$}; 
	\node at (-3,-3) {\tiny $1$}; 
	\node at (-1,1) {\tiny $11$};
	\node at (-2,2) {\tiny $9$};
	\node at (-3,3) {\tiny $8$};
	\node at (-4,4) {\tiny $6$};   
	\node at (-5,5) {\tiny $3$};
	
	\draw[arrows={-angle 90}, shorten >=3, shorten <=3]  (5,5) -- (4,-4); 
	\draw[arrows={-angle 90}, shorten >=3, shorten <=3]  (4,4) -- (3,-3); 
	\draw[arrows={-angle 90}, shorten >=3, shorten <=3]  (3,3) -- (2,-2); 

    \draw[arrows={-angle 90}, shorten >=3, shorten <=3]  (5,-5) -- (-2,-2); 
    \draw[arrows={-angle 90}, shorten >=3, shorten <=3]  (4,-4) -- (-1,-1); 
    
    \draw[arrows={-angle 90}, shorten >=3, shorten <=3]  (-3,-3) -- (-5,5); 
    \draw[arrows={-angle 90}, shorten >=3, shorten <=3]  (-2,-2) -- (-3,3); 
    \draw[arrows={-angle 90}, shorten >=3, shorten <=3]  (-1,-1) -- (-1,1); 
       
    \draw[arrows={-angle 90}, shorten >=3, shorten <=3]  (-5,5) -- (5,5); 
    \draw[arrows={-angle 90}, shorten >=3, shorten <=3]  (-4,4) -- (4,4); 
    \draw[arrows={-angle 90}, shorten >=3, shorten <=3]  (-3,3) -- (2,2); 
    \draw[arrows={-angle 90}, shorten >=3, shorten <=3]  (-2,2) -- (1,1); 
    
\end{tikzpicture} 
\end{center}
\end{ex}

Let $M$ be a nilpotent representation of $\Delta_n$ and let $(\mr{wt}(b))_{b\in B}$ be an attractive grading on an aligned $Q(M,B)$. Define a $\C^*$-action on $M$ by the formula \eqref{eq:C*ActionAttractiveGrading}. It is immediate to check that all hypotheses of \cite[Lemma~1.1]{Cerulli2011} are satisfied, hence the $\C^*$-action extends to the quiver Grassmannian:
\begin{lem}\label{lem:C-action-extends-to-quiver-grass}Let $M$ be a nilpotent representation of $\Delta_n$ and let $(\mr{wt}(b))_{b\in B}$ be an attractive grading on $Q(M,B)_0$. Then for any $U\in \mr{Gr}_\mb{e}(M)$ and any $z\in \C^*$, also $z\cdot U\in \mr{Gr}_\mb{e}(M)$.
\end{lem}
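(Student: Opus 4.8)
The plan is to apply \cite[Lemma~1.1]{Cerulli2011}, whose hypotheses — as recorded in the Remark following Definition~\ref{defi:attractive-grading} — amount to requiring a grading satisfying (AG2) together with the weak form of (AG1), in which $">"$ is replaced by $"\neq"$. Since the grading produced by Proposition~\ref{prop:attractiveGradingDeltan} satisfies (AG2) and even the strong form of (AG1), \emph{a fortiori} the hypotheses of the cited lemma hold, and the claim follows. For completeness I would also give the short direct argument, which I now sketch, as it isolates exactly why the action descends to $\mr{Gr}_\mb{e}(M)$.

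First I would observe that for each $z\in\C^*$ the formula \eqref{eq:C*ActionAttractiveGrading} defines a linear automorphism of $V=\bigoplus_i M_i$ sending each basis vector to a scalar multiple of itself; in particular it preserves each graded piece $M_i$, is invertible, and hence preserves dimension vectors. Thus it remains only to check that it carries subrepresentations to subrepresentations. The key step is the semi-invariance identity: for every arrow $a\in Q_1$,
\[ M_a(z\cdot w)=z^{-d(a)}\,\big(z\cdot M_a(w)\big)\qquad\hbox{for all }w\in M_{s_a}. \]
I would prove this on basis vectors $w=v^{(s_a)}_k$: writing $M_a(v^{(s_a)}_k)=\sum_\ell c_{k\ell}\,v^{(t_a)}_\ell$, the only indices $\ell$ with $c_{k\ell}\neq 0$ are exactly those for which $v^{(s_a)}_k\to v^{(t_a)}_\ell$ is an edge of $Q(M)$, and on precisely these edges (AG2) gives $\mr{wt}(v^{(t_a)}_\ell)=\mr{wt}(v^{(s_a)}_k)+d(a)$. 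Substituting into $z\cdot M_a(v^{(s_a)}_k)$ and comparing with $M_a(z\cdot v^{(s_a)}_k)=z^{\mr{wt}(v^{(s_a)}_k)}M_a(v^{(s_a)}_k)$ yields the displayed relation.

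Granting the identity, the conclusion is immediate. If $U\in\mr{Gr}_\mb{e}(M)$, so that $M_a(U_{s_a})\subseteq U_{t_a}$ for all $a$, then
\[ M_a\big((z\cdot U)_{s_a}\big)=z^{-d(a)}\big(z\cdot M_a(U_{s_a})\big)\subseteq z^{-d(a)}\big(z\cdot U_{t_a}\big)=(z\cdot U)_{t_a}, \]
where the last equality uses that scaling by the nonzero constant $z^{-d(a)}$ fixes the subspace $z\cdot U_{t_a}$. Hence $z\cdot U$ is again a subrepresentation, of the same dimension vector $\mb{e}$, i.e. a point of $\mr{Gr}_\mb{e}(M)$.

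There is essentially no serious obstacle here: the entire content is the bookkeeping in the semi-invariance identity, and the one point to be careful about is that (AG2) constrains weights only along the \emph{existing} edges of the coefficient quiver — but these are exactly the indices carrying the nonzero coefficients $c_{k\ell}$, so the identity holds on the nose rather than merely up to lower-order terms. It is also worth noting that (AG1) plays no role in this particular argument (it will instead be needed to guarantee attractiveness, and hence the cellular decomposition of Theorem~\ref{trm:cell_decomp-approx-lin-deg-aff-flag}), so the weak $"\neq"$ form already suffices, which is precisely why the hypotheses of \cite[Lemma~1.1]{Cerulli2011} are met.
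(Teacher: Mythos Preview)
Your proposal is correct and matches the paper's approach exactly: the paper's entire justification is the sentence preceding the lemma, ``It is immediate to check that all hypotheses of \cite[Lemma~1.1]{Cerulli2011} are satisfied, hence the $\C^*$-action extends to the quiver Grassmannian.'' Your additional direct argument via the semi-invariance identity $M_a(z\cdot w)=z^{-d(a)}(z\cdot M_a(w))$ is correct and goes beyond what the paper records; your observation that (AG1) plays no role here is also accurate and well made.
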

It is also possible to describe the fixed points of the introduced $\C^*$-action. As usual, $B^{(i)}=\big\{v_k^{(i)}\mid k\in [m_i]\big\}$ denotes the basis of $M^{(i)}$, which we use to construct the aligned $Q(M,B)$. The following lemma is just a special case of \cite[Theorem~1]{Cerulli2011}.

\begin{lem}\label{lem:fixedPtsAttractiveGrading}Let $M$ be a nilpotent $\Delta_n$-representation and let $(\mr{wt}(b))_{b\in B}$ be an attractive grading on $Q(M,B)$. Then, the fixed point set of the above defined $\C^*$-action is 
\[
\left\{L\in\mr{Gr}_\mb{e}(M)\mid \fa i \in \Z_n \ \mr{there\ are\ } K_i \in \binom{[m_i]}{e_i} \mr{\ s.t.\ }L^{(i)}=\big\langle v^{(i)}_{k}\vert k \in K_i \big\rangle \right\}.
\] 
\end{lem}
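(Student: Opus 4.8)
The plan is to reduce the statement to an elementary fact about diagonalisable torus actions, the only geometric input being that by (AG1) the weights on each $B^{(i)}$ are pairwise distinct. First I would unwind what it means for $U=(U_i)_{i\in\Z_n}\in\mr{Gr}_\mb{e}(M)$ to be fixed. The $\C^*$-action on $M$ from \eqref{eq:C*ActionAttractiveGrading} is graded, so it restricts to each $M_i$ as the diagonal action $z\cdot v^{(i)}_k=z^{\mr{wt}(v^{(i)}_k)}v^{(i)}_k$, and the induced action on the quiver Grassmannian sends $U$ to $(z\cdot U_i)_{i\in\Z_n}$. Hence $U$ is a fixed point if and only if each subspace $U_i\subseteq M_i$ is set-wise invariant under this diagonal $\C^*$-action. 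That the $\C^*$-action preserves $\mr{Gr}_\mb{e}(M)$ at all (i.e. that $z\cdot U$ is again a subrepresentation) has already been recorded in the preceding lemma, so I do not need to reprove compatibility with the maps $M_a$.

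Next I would prove the key linear-algebra claim: a subspace $W\subseteq M_i$ is invariant under the diagonal action exactly when it is a coordinate subspace $\langle v^{(i)}_k\mid k\in K_i\rangle$ for some $K_i\subseteq[m_i]$. For this I would observe that by (AG1) the weights $\mr{wt}(v^{(i)}_1)<\cdots<\mr{wt}(v^{(i)}_{m_i})$ are pairwise distinct, so that $M_i=\bigoplus_{k\in[m_i]}\langle v^{(i)}_k\rangle$ is precisely the decomposition of $M_i$ into one-dimensional $\C^*$-weight spaces. Since $\C^*$ is linearly reductive, any invariant $W$ inherits a weight-space decomposition $W=\bigoplus_k\big(W\cap\langle v^{(i)}_k\rangle\big)$, and each summand is either $0$ or all of $\langle v^{(i)}_k\rangle$, which produces the set $K_i$. (Alternatively, avoiding representation theory, one takes enough distinct values of $z$ and uses a Vandermonde-type argument to recover each $c_k v^{(i)}_k$ from the invariance of any $\sum_k c_k v^{(i)}_k\in W$.) The dimension constraint $\dim_\Kbb U_i=e_i$ then forces $|K_i|=e_i$, that is $K_i\in\binom{[m_i]}{e_i}$.

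Combining the two steps yields the inclusion $\subseteq$: every fixed point has each $U_i$ a coordinate subspace of the prescribed dimension. The reverse inclusion $\supseteq$ is immediate, since a coordinate subspace is visibly invariant under the diagonal action, so any $U\in\mr{Gr}_\mb{e}(M)$ all of whose components $U_i$ are coordinate subspaces is automatically $\C^*$-fixed.

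I do not expect a genuine obstacle here: the statement is a special case of \cite[Theorem~1]{Cerulli2011}, and the single point requiring care is the appeal to (AG1) to guarantee distinctness of the weights on each $B^{(i)}$. Without it the diagonal action on $M_i$ would have higher-dimensional weight spaces, invariant subspaces would no longer be forced to be coordinate subspaces, and the fixed-point set would be strictly larger than the combinatorially indexed set described in the statement.
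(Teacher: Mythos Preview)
Your argument is correct; the paper itself does not supply a proof but simply records the lemma as a special case of \cite[Theorem~1]{Cerulli2011}, which you also acknowledge. Your direct weight-space argument is exactly the content of that citation specialised to the present setting, so there is no substantive difference in approach---you have just unpacked what the paper leaves as a reference.
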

\begin{ex}\label{ex:C-fixed-points}
Let $M = U(2;2)\otimes \mathbb{C}^2 \oplus U(2;1)\otimes \mathbb{C}^2$ a representation of $\Delta_2$ and set $\mathbf{e}:=(1,2)$. Following the construction in the proof of Proposition~\ref{prop:AlignabilityDeltan}, the aligned coefficient quiver of $M$ together with the corresponding basis vectors is given as
\begin{center}
\begin{tikzpicture}[scale=.5]
	
	\draw[fill=black] (1,-1) circle (.15); 
	\draw[fill=black] (1,-2) circle (.15); 

	\draw[fill=black] (2,-1) circle (.15); 
	\draw[fill=black] (2,-2) circle (.15); 
	\draw[fill=black] (2,-3) circle (.15); 
	\draw[fill=black] (2,-4) circle (.15);

	\draw[arrows={-angle 90}, shorten >=2, shorten <=2]  (1,-1) -- (2,-1); 
	\draw[arrows={-angle 90}, shorten >=2, shorten <=2]  (1,-2) -- (2,-2); 
	
    \node at (0,-1) {\tiny $v^{(1)}_1$};  
    \node at (0,-2) {\tiny $v^{(1)}_2$};  
    
    \node at (3,-1) {\tiny $v^{(2)}_1$};  
    \node at (3,-2) {\tiny $v^{(2)}_2$};  
    \node at (3,-3) {\tiny $v^{(2)}_3$};  
    \node at (3,-4) {\tiny $v^{(2)}_4$};  
\end{tikzpicture} 
\end{center}
The pair of vector spaces  
\[ U=\big(U^{(1)}=\big\langle v^{(1)}_{1}+  a v^{(1)}_{2} \big\rangle, U^{(2)}=\big\langle v^{(2)}_{1}+  a v^{(2)}_{2},v^{(2)}_{3}+  b v^{(2)}_{4} \big\rangle\big)\]
with $a,b \in \C$ describes a point in the quiver Grassmannian $\mr{Gr}_{\mb{e}}(M)$ since 
\[ M_1 U^{(1)} = \big\langle v^{(2)}_{1}+  a v^{(2)}_{2} \big\rangle \subset U^{(2)},\]
and $\dim_\C U^{(1)} = 1$ and $\dim_\C U^{(2)} = 2$. The grading $\mr{wt}(v_j^{(i)})$ is attractive and for the induced $\mathbb{C}^*$-action we compute 
\begin{align*}
    z.U &= \big(U^{(1)}=\big\langle z^1v^{(1)}_{1}+  a z^2v^{(1)}_{2} \big\rangle, U^{(2)}=\big\langle z^1v^{(2)}_{1}+  az^2 v^{(2)}_{2},z^3v^{(2)}_{3}+  bz^4 v^{(2)}_{4} \big\rangle\big)\\
    &=\big(U^{(1)}=\big\langle v^{(1)}_{1}+  az v^{(1)}_{2} \big\rangle, U^{(2)}=\big\langle v^{(2)}_{1}+  az v^{(2)}_{2},v^{(2)}_{3}+  bz v^{(2)}_{4} \big\rangle\big)
\end{align*}
which is contained in $\mr{Gr}_{\mb{e}}(M)$ by Lemma~\ref{lem:C-action-extends-to-quiver-grass}.

Following Lemma~\ref{lem:fixedPtsAttractiveGrading}, the fixed points of the $\mathbb{C}^*$-action as defined above are:
\begin{align*}
    L_1 &= \big(L_1^{(1)}=\big\langle  v^{(1)}_{1} \big\rangle, L_1^{(2)}=\big\langle v^{(2)}_{1},v^{(2)}_{2}\rangle\big),\\
    L_2 &= \big(L_2^{(1)}=\big\langle v^{(1)}_{1} \big\rangle, L_2^{(2)}=\big\langle v^{(2)}_{1},v^{(2)}_{3}\rangle\big),\\
    L_3 &= \big(L_3^{(1)}=\big\langle  v^{(1)}_{1} \big\rangle, L_3^{(2)}=\big\langle v^{(2)}_{1},v^{(2)}_{4}\rangle\big),\\
    L_4 &= \big(L_4^{(1)}=\big\langle  v^{(1)}_{2} \big\rangle, L_4^{(2)}=\big\langle v^{(2)}_{1},v^{(2)}_{2}\rangle\big),\\
    L_5 &= \big(L_5^{(1)}=\big\langle  v^{(1)}_{2} \big\rangle, L_5^{(2)}=\big\langle v^{(2)}_{2},v^{(2)}_{3}\rangle\big),\\    
    L_6 &= \big(L_6^{(1)}=\big\langle  v^{(1)}_{2} \big\rangle, L_6^{(2)}=\big\langle v^{(2)}_{2},v^{(2)}_{4}\rangle\big).    
\end{align*}
\end{ex}
By Lemma~\ref{lem:fixedPtsAttractiveGrading}, we have a finite number of fixed points and thus we can consider the corresponding attractive loci to get the decomposition \eqref{eqn:BBdecomposition}. Since we have not fixed an order on the fixed point set, we will use the notation
\begin{equation}\label{eqn:fixedPtsParameterisation}
W_L:= \Big\{ V \in \mr{Gr}_{\mb{e}}(M) \vert \lim_{z \to 0} z . V = L \Big\}, \qquad L\in \mr{Gr}_\mb{e}(M)^{\C^*}.
\end{equation}
\begin{trm}
\label{trm:cell_decomp-approx-lin-deg-aff-flag}Let $M$ be a nilpotent representation of $\Delta_n$ and consider the $\C^*$-action on $\mr{Gr}_\mb{e}(M)$ corresponding to an attractive grading on an aligned $Q(M,B)$. Then,
for every $L \in \mr{Gr}_{\mb{e}}(M)^{\C^*}$, the subset $W_L$ is an affine space and hence the quiver Grassmannian admits a cellular decomposition 
\[  
\mr{Gr}_{\mb{e}}(M) = \coprod_{L \in \mr{Gr}_{\mb{e}}(M)^{\C^*}} W_L.  
\]
\end{trm}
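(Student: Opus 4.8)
The plan is to realise $\mr{Gr}_{\mb e}(M)$ inside an ambient smooth variety, transport the $\C^*$-action there, and then exploit the combinatorics of the attractive grading to describe each attracting set $W_L$ by explicit coordinates. First I would fix the basis $B=\bigcup_i B^{(i)}$ used to build $Q(M)$ and embed $\mr{Gr}_{\mb e}(M)\hookrightarrow \prod_{i\in\Z_n}\mr{Gr}_{e_i}(M_i)$; the $\C^*$-action from \eqref{eq:C*ActionAttractiveGrading} is then the restriction of the linear torus action on each $M_i$ with weights $\mr{wt}(v^{(i)}_k)$. Since $\mr{Gr}_{\mb e}(M)$ is projective, every $V$ has a well-defined limit $\lim_{z\to 0}z.V$, which is a $\C^*$-fixed point; as these are finite by Lemma~\ref{lem:fixedPtsAttractiveGrading}, the sets $W_L$ of \eqref{eqn:fixedPtsParameterisation} partition $\mr{Gr}_{\mb e}(M)$. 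The whole content of the theorem is therefore that each $W_L$ is an affine space.

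Next I would produce coordinates on $W_L$. Fix a fixed point $L$, described by Lemma~\ref{lem:fixedPtsAttractiveGrading} via pivot sets $K_i\in\binom{[m_i]}{e_i}$, so that $L_i=\langle v^{(i)}_k\mid k\in K_i\rangle$; note that, $L$ being a subrepresentation, its pivot sets are closed under following the edges of $Q(M)$. Because (AG1) says that within each $B^{(i)}$ the weight is strictly increasing in the index, the condition $\lim_{z\to0}z.V=L$ forces each $V_i$ to be in reduced column-echelon form with pivots in the positions $K_i$: it admits a unique basis
\[
u^{(i)}_k=v^{(i)}_k+\sum_{\substack{l>k\\ l\notin K_i}}c^{(i)}_{kl}\,v^{(i)}_l,\qquad k\in K_i .
\]
This identifies $W_L$ with a locally closed subset of the affine space $\mathbb A^{N}$ with coordinates $\big(c^{(i)}_{kl}\big)$, cut out precisely by the subrepresentation conditions $M_a(V_i)\subseteq V_{i+1}$ for all arrows $a\colon i\to i+1$.

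The heart of the proof is to show that these conditions carve out an affine subspace. Since each vertex of the coefficient quiver has at most one outgoing edge, $M_a$ sends every $v^{(i)}_k$ to a basis vector $v^{(i+1)}_{s(k)}$ (or to $0$), so $M_a(u^{(i)}_k)$ is a \emph{linear} expression in the $c^{(i)}_{kl}$. By (AG2) every edge raises the weight by the same constant $D$, hence $M_a$ preserves the weight order and sends the echelon tail of $u^{(i)}_k$ to strictly higher-index basis vectors at level $i+1$; imposing $M_a(u^{(i)}_k)\in V_{i+1}$ and comparing coefficients against the echelon basis $\{u^{(i+1)}_m\}$ yields, for each non-pivot column $p$, a relation of the shape
\[
c^{(i+1)}_{s(k),p}=c^{(i)}_{k,s^{-1}(p)}-\sum_{\substack{m\in K_{i+1}\\ s(k)<m<p}}c^{(i)}_{k,s^{-1}(m)}\,c^{(i+1)}_{m,p},
\]
with the convention that absent terms are $0$. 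The right-hand side involves only level-$i$ coordinates together with level-$(i+1)$ coordinates in the same column and larger pivot index $m$. Following these substitutions backward runs along the segments of $Q(M)$, which terminate because $M$ is nilpotent, so the recursion is well-founded: every ``determined'' coordinate becomes a polynomial in the remaining ``free'' ones, exhibiting $W_L$ as the graph of a morphism $\mathbb A^{d_L}\to\mathbb A^{N-d_L}$, hence as an affine space of dimension $d_L$. This is exactly the mechanism of \cite[Theorem~3.13]{Pue2020}, whose hypotheses are guaranteed here by Proposition~\ref{prop:attractiveGradingDeltan}.

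The main obstacle is precisely this last step: a priori the equations coming from $M_a(u^{(i)}_k)\in V_{i+1}$ are quadratic in the coordinates, so it is not automatic that $W_L$ is even smooth, let alone an affine space. What rescues the argument is the interplay of the three structural inputs --- (AG1) to obtain an honest echelon normal form, (AG2) to ensure that $M_a$ respects the weight order so that the quadratic corrections involve only coordinates that are earlier in the induced order, and the nilpotency of $M$ to guarantee that running around the cycle $\Delta_n$ produces a finite, solvable triangular system. Verifying that these combine into a genuine triangular (rather than merely polynomial) system, so that the solution locus is a coordinate graph over the free coordinates, is the technical crux and is where I would spend the most care.
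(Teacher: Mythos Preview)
Your proposal is correct and follows exactly the strategy the paper invokes: the paper's own proof is simply a reference to \cite[Theorem~3.13]{Pue2020}, observing that that argument depends only on the attractiveness of the grading, and your sketch (echelon normal forms on each $M_i$ via (AG1), compatibility of $M_a$ with the weight order via (AG2), and termination of the resulting recursion via nilpotency of $M$) is precisely how that cited result is proved. The coordinate description you arrive at is the one the paper records immediately after the theorem as \eqref{eqn:CoordinateDescriptionPointCell}--\eqref{eqn2:CoordinateDescriptionConditions}.
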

\begin{proof}
In the case of the specific attractive grading of the aligned coefficient quiver as described in the proof of Proposition~\ref{prop:AlignabilityDeltan}, the statement is \cite[Theorem~4.13]{Pue2020}. It is immediate to see that the proof can be extended, because it only relies on the attractiveness of the grading if the underlying coefficient quiver is aligned. For the convenience of the reader we summarise the main steps from the proof of \cite[Theorem~4.13]{Pue2020} and highlight the where the generalisation takes place.

The proof has two main steps. First we show that the BB-decomposition is an $\alpha$-partition, i.e. there exists a total order of the fixed points 
$ \mr{Gr}^{\Delta_n}_{\mb{e}}(M)^{\mathbb{C}^*} = \{L_1, \dots, L_r \}$ such that \( \bigsqcup_{j=1}^s \mathcal{C}(L_j) \)
is closed in $\mr{Gr}_{\mb{e}}(M)$ for all $s \in [r]$.
This follows from \cite[Lemma~4.12]{Carrell02} and does not depend on the attractive grading.

It remains to show that the $\mathcal{C}(L)$ are isomorphic to affine spaces. From the definition of quiver Grassmannians we know that
\[\mr{Gr}_{\mb{e}}(M) = \Big\{ (V^{(i)})_{i \in \Z_n} \in \prod_{i \in \Z_n} \mr{Gr}_{e_i}({m_i}) \ : \ M_iV^{(i)} \subseteq V^{(i+1)} \fa i \in \Z_n \Big\},\]
where $\mr{Gr}_{e}({m})$ is the Grassmannian of $e$-dimensional subspaces in $\C^{m}$. We start by showing that the attractive sets 
\(\mathcal{C}(L^{(i)}) := \mathcal{C}(L) \cap \mr{Gr}_{e_i}({m_i}) \) are isomorphic to affine spaces.

By Lemma~\ref{lem:fixedPtsAttractiveGrading} there exists an index set $K_i \in \binom{[m_i]}{e_i}$ such that $L^{(i)}$ is the span of the $v^{(i)}_k$ for $k \in K_i$. To apply this result we only need $\neq$ in (AG1) of Definition~\ref{defi:attractive-grading} (cf. Remark~\ref{rem:weight-functions}). From the properties (AG1) and (AG2) we deduce that a point $V^{(i)}$ in the attracting set $\mathcal{C}(L^{(i)})$ has generators
\begin{equation}\label{eqn:CoordinateDescriptionPointCell}
 w_{k}^{(i)} = v_{k}^{(i)} + \sum_{j \in [m_i] \setminus  [k] \, : \ j \notin K_i} u_{j,k}^{(i)} v_j^{(i)}\qquad\hbox{with } u_{j,k}^{(i)} \in \C.
\end{equation}
for $k \in K_i$ and $\mu_{\ell,k}^{(i)} \in \C$. Hence $\mathcal{C}(L^{(i)})$ is an affine space. It is the key observation of this proof that the above description of the generators is not only valid for the specific attractive grading from the proof of Proposition~\ref{prop:AlignabilityDeltan}, but also for all other attractive gradings. The following computation is exactly the same as in the proof of \cite[Theorem~4.13]{Pue2020}.

Observe that for a representation $V$ in an attracting set of a $\C^*$-fixed point $L$, it holds that
\begin{align*}
V \in \mathcal{C}(L) \Leftrightarrow  V \in  \mr{Gr}^{\Delta_n}_{\mb{e}}(M)  \cap \prod_{i \in\Z_n} \mathcal{C}(L^{(i)}) . 
\end{align*}

Now we describe the equations arising from the condition $M_iV^{(i)} \subseteq V^{(i+1)}$. By the arrangement of the segments in $Q(M)$, it follows that $M_i w_{k}^{(i)}=0$ if $M_i v_{k}^{(i)}=0$. In this case there are no relations. Assume $M_i v_{k}^{(i)} \neq 0$ and let $k' \in K_{i+1}$ be such that $M_i v_{k}^{(i)}=v_{k'}^{(i+1)}$. Analogously, we define the index set $K_i' \subseteq [m_{i+1}]$. If $M_i v_{k}^{(i)} \neq 0$, the coefficients are subject to the conditions 
\begin{align}
\label{eqn1:CoordinateDescriptionConditions}u_{j,k}^{(i)} &= u_{j',k'}^{(i+1)} + \sum_{\substack{\ell \in [j-1] \setminus [k]\, :  \\ M_iv_\ell^{(i)} \neq 0, \\ \ell' \in K_{i+1} \setminus K_i' }} u_{\ell,k}^{(i)} \, u_{j',\ell'}^{(i+1)}  \quad \big( \mr{if} \ j \in [m_i]\setminus [k] \, : \ M_i v_{j}^{(i)}\neq 0, \ j' \notin K_{i+1}\big), \\
\label{eqn2:CoordinateDescriptionConditions} 0 &= u_{h,k'}^{(i+1)} + \sum_{\substack{\ell \in [m_i] \setminus  [k] \, : \\ M_iv_\ell^{(i)} \neq 0, \ \ell' < h, \\ \ell' \in K_{i+1} \setminus K_i', }} u_{\ell,k}^{(i)} \, u_{h,\ell'}^{(i+1)} \quad \binom{\mr{if} \ h \in [m_{i+1}] \setminus  [k'] \, : \ h \notin K_{i+1},}{\nexists \ell \in [m_i]\setminus[k] \ \mr{s.t.:}  \ M_i v_{\ell}^{(i)}= v_h^{(i+1)}}.
\end{align}
Finally, it is shown as in \cite[Theorem~4.13]{Pue2020} that these equations parametrise an affine subspace in the product of Grassmannians $\mr{Gr}_{e_i}(m_i)$. 
\end{proof}
\begin{ex}\label{ex:C-fixed-points-and-cells} For the fixed point $L_2$ from Example~\ref{ex:C-fixed-points} the vector spaces of the points in the cell $\mathcal{C}(L_2)$ have the generators
\begin{align*}
 w_{1}^{(1)} &= v_{1}^{(1)} +  u_{2,1}^{(1)} v_2^{(1)}\\
 w_{1}^{(2)} &= v_{1}^{(2)} +  u_{2,1}^{(2)} v_2^{(2)}+  u_{4,1}^{(2)} v_4^{(2)}\\
 w_{3}^{(2)} &= v_{3}^{(2)} +  u_{4,3}^{(2)} v_4^{(2)}
\end{align*}
by Equation~(\ref{eqn:CoordinateDescriptionPointCell}) in the proof of the above theorem. Following (\ref{eqn1:CoordinateDescriptionConditions}) and (\ref{eqn2:CoordinateDescriptionConditions}) these generators are subject to the conditions 
\[  u_{2,1}^{(1)} =  u_{2,1}^{(2)} \quad \mr{and} \quad  u_{4,1}^{(2)}=0.\] 
Hence we obtain that the cell $\mathcal{C}(L_2)$ is two dimensional.
\end{ex}
\subsection{Action of a bigger Torus}
\label{sec:act-bigger-T}
In this section we introduce an action of a bigger torus $T$ on $\mr{Gr}_{\mb{e}}(M)$ and we show that the $\C^*$-action, coming from an attractive grading as in Proposition~\ref{prop:attractiveGradingDeltan}, corresponds to a (generic) cocharacter of $T$.

Let $d_0 := \sum_{i \in \Z_0}d_i$ be the number of indecomposable summands of $M$. We fix once and for all an enumeration $U(i_1;\ell_1), \ldots, U(i_{d_0};\ell_{d_0})$ of the segments of $Q(M,B)$. Each point in the coefficient quiver and hence each basis vector $b \in B$ is uniquely determined by the index $j \in [d_0]$ of the segment it belongs to, and its position $p \in \{0,\dots,\ell_j-1\}$ on the segment itself. Here we declare that the position of a starting point is $p=0$. We will denote by $b_{j,p}$ such a basis vector.

Let $T := (\C^*)^{d_0 +1}$. For any $\gamma := (\gamma_0, (\gamma_j)_{j \in [d_0]}) \in T$ we set
\[ \gamma.b_{j,p} := \gamma^{p}_0 \gamma_j \cdot b_{j,p}.\]
With $T'$ we denote the subtorus obtained from $T$ by setting $\gamma_0=1$. By extending linearly, we get an action on the graded vector space $\bigoplus_{i\in \Z/n\Z} M_i$, which preserves each graded piece. 
\begin{lem}Let $M$ be a representation of $\Delta_n$, and let $T$ act on $\bigoplus M^{(i)}$ as above. Then for any $U\in\mr{Gr}_{\mb{e}}(M)$ and any $\gamma\in T$, $\gamma\cdot U\in \mr{Gr}_\mb{e}(U)$. 
\end{lem}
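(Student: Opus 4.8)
The plan is to show that the diagonal action of $\gamma$ almost commutes with every structure map $M_a$, up to the single scalar $\gamma_0$, and then to read off that it carries subrepresentations to subrepresentations. First I would record the interaction of $\gamma$ with the grading: since $\gamma\cdot b_{j,p}=\gamma_0^p\gamma_j\,b_{j,p}$ is diagonal in the basis $B$, the action preserves each graded piece $M_i=\langle B^{(i)}\rangle$ and restricts to an invertible linear map $\gamma|_{M_i}$ on it. In particular $\dim_\C(\gamma\cdot U_i)=\dim_\C U_i=e_i$ for every $i\in\Z_n$, so the dimension vector of $\gamma\cdot U$ equals $\mb{e}$ automatically, and it only remains to check that $\gamma\cdot U$ is a subrepresentation.

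The key step is a commutation relation, and this is where the aligned basis from \S\ref{sec:nilpot-rep-cycle} does its work. Because the connected components of $Q(M)$ are segments, every structure map is monomial in this basis: for an arrow $a\colon i\to i+1$ the map $M_a\colon M_i\to M_{i+1}$ sends each basis vector $b_{j,p}\in B^{(i)}$ to the next vector $b_{j,p+1}$ along its segment (with coefficient $1$) when $p<\ell_j-1$, and to $0$ otherwise. A direct computation on basis vectors, $M_a(\gamma\cdot b_{j,p})=\gamma_0^p\gamma_j\,b_{j,p+1}=\gamma_0^{-1}\,\gamma\cdot b_{j,p+1}$, then yields for every arrow $a\colon i\to i+1$ the identity
\[ M_a\circ\big(\gamma|_{M_i}\big)=\gamma_0^{-1}\,\big(\gamma|_{M_{i+1}}\big)\circ M_a. \]
This says that $\gamma$ intertwines the maps $M_a$ up to the overall nonzero scalar $\gamma_0^{-1}$; it is exactly the reason the monomial (segment) basis was needed, since for an arbitrary basis the $M_a$ would fail to be monomial and a diagonal torus would not respect the representation structure.

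Finally I would conclude. Let $U\in\mr{Gr}_{\mb{e}}(M)$, so that $M_a(U_i)\subseteq U_{i+1}$ for each arrow $a\colon i\to i+1$. Applying the commutation relation gives
\[ M_a\big(\gamma\cdot U_i\big)=\gamma_0^{-1}\,\big(\gamma|_{M_{i+1}}\big)\big(M_a(U_i)\big)\subseteq\gamma_0^{-1}\,\big(\gamma|_{M_{i+1}}\big)\big(U_{i+1}\big)=\gamma\cdot U_{i+1}, \]
where the last equality holds because $\gamma\cdot U_{i+1}$ is a linear subspace and hence stable under multiplication by the nonzero scalar $\gamma_0^{-1}$. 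Thus $\gamma\cdot U$ is a subrepresentation of $M$, and together with the dimension count above this shows $\gamma\cdot U\in\mr{Gr}_{\mb{e}}(M)$. I expect no genuine obstacle beyond bookkeeping; the only points requiring care are getting the power of $\gamma_0$ right in the commutation relation and invoking the segment structure to justify that the structure maps are monomial.
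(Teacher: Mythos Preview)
Your proof is correct and follows essentially the same approach as the paper: both verify on basis vectors the commutation relation $M_a\circ\gamma=\gamma_0^{-1}\,\gamma\circ M_a$ (the paper phrases this via the single nilpotent operator $A$ from Remark~\ref{rem:nilpoRepDeltanGradedVecSpace} rather than the individual $M_a$, but the computation is identical), and then deduce $M_a(\gamma\cdot U_i)\subseteq\gamma\cdot U_{i+1}$ exactly as you do.
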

\begin{proof}Let $M=(\bigoplus_{i\in\Z/n\Z}M^{(i)}, A)$ as in Remark~\ref{rem:nilpoRepDeltanGradedVecSpace}. For $\gamma\in T$, we denote by $\gamma$ also the corresponding automorphism of $\bigoplus_{i\in\Z/n\Z}M^{(i)}$. It is easy to verify that, up to a non-zero scalar, $\gamma$ commutes with $A$. Indeed, it is enough to check this statement on the basis vectors. 
For $b_{j,p}\in B$ with $p=\ell_j-1$, $Ab_{j,p}=0$ holds and the statement is trivial. If $p\neq \ell_j-1$, then $Ab_{j,p}=b_{j,p+1}$ and hence 
\[
(A\circ \gamma)(b_{j,p})=\gamma_0^{p}\gamma_j (A(b_{j,p}))=\gamma_0^{p}\gamma_j b_{j,p+1}=\gamma_0^{-1}\big(\gamma.(b_{j,p+1})\big)=
\gamma_0^{-1}(\gamma\circ A)(b_{j,p}).
\]
Let $U=(\bigoplus_{i\in\Z/n\Z} U^{(i)}, \overline{A})\in\mr{Gr}_{\mb{e}}(M)$, where $\overline{A}=A_{|\bigoplus U^{(i)}}$. Since $\gamma$ is an automorphism of $\bigoplus M^{(i)}$, which preserves the $\Z/n\Z$-grading, it preserves inclusions and dimensions of graded subspaces, i.e. $\dim_\Kbb U^{(i)}=\dim_\Kbb \gamma.U^{(i)}$. Moreover, by the previous computation we obtain
\[
A(\gamma.U_i)=\gamma_0^{-1}(\gamma.A(U^{(i)}))\subseteq \gamma_0^{-1}(\gamma.U^{(i+1)})=\gamma. U^{(i+1)}\qquad\hbox{for any }i\in\Z/n\Z.
\]
\end{proof}
\begin{rem}
In particular, we obtain that $T'$ commutes with $A$. Hence $T'$ is a subgroup in the automorphism group $\mr{Aut}_{\Delta_n}(M)$ of the $ \Delta_n$-representation $M$,  whereas $T$ has no embedding into $\mr{Aut}_{\Delta_n}(M)$. If the support of $M$ is acyclic (i.e. at least one of the maps $M_a$ is zero), it is possible to show that it is sufficient to work with the $T'$-action, in order to obtain the structure of a GKM-variety. For the special case of the Feigin degeneration of the flag variety of type $A$, the $T'$-action is studied in \cite{CFR2013}. The following computations for $T$ can be specialised to the $T'$-action on $M$ with acyclic support.
\end{rem}
\begin{thm}\label{trm:genericCocharDeltan} Let $M$ be a nilpotent representation of $\Delta_n$ and let $(\mr{wt}(b))_{b \in B}$ be an attractive grading on $Q(M,B)$ with $d(a)=D$ for all $a \in \Z_n$. Then
\[
\chi:\C^*\to T=(\C^*)^{d_0+1}, \qquad z\mapsto \big(z^D, (z^{wt(b_{j,1})})_{j\in[d_0]}\big)
\]
is a generic cocharacter for the above described $T$-action on $\mr{Gr}_\mb{e}(M)$.
\end{thm}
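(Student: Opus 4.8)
The plan is to reduce everything to the $\C^*$-action coming from the attractive grading via \eqref{eq:C*ActionAttractiveGrading}, whose fixed point set was already determined in Lemma~\ref{lem:fixedPtsAttractiveGrading}. Write $X := \mr{Gr}_\mb{e}(M)$. Since $X$ is a complex projective variety, Remark~\ref{rem:genCochar} guarantees that it suffices to check that the fixed point set $X^{\chi(\C^*)}$ is finite; genericity, i.e. $X^{\chi(\C^*)}=X^T$, then follows automatically from the Euler characteristic argument recalled there. So the whole theorem will come down to identifying the $\C^*$-action induced by $\chi$ with the one from the attractive grading.

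First I would compute the $\C^*$-action that $\chi$ induces on the basis $B$. Inserting $\gamma=\chi(z)=\big(z^{D},(z^{\mr{wt}(b_{j,1})})_{j\in[d_0]}\big)$ into the defining formula $\gamma.b_{j,p}=\gamma_0^{\,p}\,\gamma_j\, b_{j,p}$ of the $T$-action from \S\ref{sec:act-bigger-T} yields
\[
\chi(z).b_{j,p}=z^{\,Dp+\mr{wt}(b_{j,1})}\, b_{j,p}.
\]
The key step is to compare the exponent with $\mr{wt}(b_{j,p})$. Since the grading produced in Proposition~\ref{prop:attractiveGradingDeltan} has constant edge weight $D$, and the arrows along the $j$-th segment are $b_{j,p}\to b_{j,p+1}$, property (AG2) gives $\mr{wt}(b_{j,p})=\mr{wt}(b_{j,1})+(p-1)D$. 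Substituting, $Dp+\mr{wt}(b_{j,1})=\mr{wt}(b_{j,p})+D$, and hence
\[
\chi(z).b_{j,p}=z^{D}\cdot z^{\mr{wt}(b_{j,p})}\, b_{j,p}.
\]
In other words, on the underlying graded vector space $\chi(z)$ coincides with the automorphism \eqref{eq:C*ActionAttractiveGrading} attached to the attractive grading, up to the global scalar $z^{D}$, which is independent of both $j$ and $p$.

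Next I would observe that this global scalar is invisible on $X$: rescaling every vector of $\bigoplus_{i}M_i$ by one and the same nonzero constant fixes each subspace, hence each point of the quiver Grassmannian (cf. Remark~\ref{rem:QuiverGrIndptBasis}). Consequently the $\C^*$-action induced by $\chi$ on $X$ is literally the attractive-grading action, and in particular $X^{\chi(\C^*)}$ equals the fixed point set described in Lemma~\ref{lem:fixedPtsAttractiveGrading}. That set is finite, being indexed by the coordinate data $(K_i)_{i\in\Z_n}$ with $K_i\in\binom{[m_i]}{e_i}$. Finiteness of $X^{\chi(\C^*)}$ then gives genericity via Remark~\ref{rem:genCochar}, completing the argument.

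I do not expect a serious obstacle here; the only point demanding care is the exponent bookkeeping in the middle step, where one must invoke the constant edge weight $D$ so that the factor $\gamma_0^{\,p}=z^{Dp}$ exactly matches the climb of $\mr{wt}$ along each segment, leaving behind only the segment-independent shift by $D$. Everything else is either a direct citation (Lemma~\ref{lem:fixedPtsAttractiveGrading}, Remark~\ref{rem:genCochar}) or the elementary remark that projective coordinates are insensitive to a global rescaling.
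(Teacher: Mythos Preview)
Your proof is correct and follows essentially the same route as the paper: compute $\chi(z).b_{j,p}$, identify the resulting $\C^*$-action with the attractive-grading action, invoke Lemma~\ref{lem:fixedPtsAttractiveGrading} for finiteness of the fixed points, and conclude via Remark~\ref{rem:genCochar}. The only difference is that the paper asserts directly $Dp+\mr{wt}(b_{j,1})=\mr{wt}(b_{j,p})$, whereas you obtain $\mr{wt}(b_{j,p})+D$ and then dispose of the global scalar $z^{D}$; your version is simply more careful about the position-indexing convention (starting at $p=0$) and the extra step is harmless.
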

\begin{proof}For any $z\in\C^*$ and any $b_{j,p}\in B$, we have
\[
\chi(z).b_{j,p}=(\chi(z)_0^{p}\chi(z)_j)\cdot b_{j,p}=z^{Dp+\mr{wt}(b_{j,1})}b_{j,p}=z^{\mr{wt}(b_{j,p})}b_{j,p}.
\]
Thus the $\C^*$-action on $\mr{Gr}_\mb{e}(M),$ induced by the cocharacter $\chi$, coincides with the $\C^*$-action in \eqref{eq:C*ActionAttractiveGrading}, coming from the attractive grading, and we can apply Lemma~\ref{lem:fixedPtsAttractiveGrading} to deduce that $|\mr{Gr}_\mb{e}(M)^{\chi(\C^*)}|<\infty$. This concludes the proof by Remark~\ref{rem:genCochar}.
\end{proof}
\begin{cor}\label{cor:BBfiltrDeltan} Let $M$ be a nilpotent representation of $\Delta_n$. Then, the $T$-variety $\mr{Gr}_{\mb{e}}(M)$ is BB-filterable.
\end{cor}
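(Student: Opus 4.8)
The plan is to verify the two conditions (BB1) and (BB2) of Definition~\ref{def:BB-filterable} directly, using the generic cocharacter $\chi$ produced in Theorem~\ref{trm:genericCocharDeltan}. First I would record that $\mr{Gr}_{\mb{e}}(M)$ is a projective variety, being a closed subvariety of an ordinary Grassmannian (Remark~\ref{rem:QuiverGrIndptBasis}), and that it carries the $T$-action introduced in \S\ref{sec:act-bigger-T}. For (BB1), I would invoke Theorem~\ref{trm:genericCocharDeltan}, which guarantees that $\chi$ is generic, i.e. $\mr{Gr}_{\mb{e}}(M)^{\chi(\C^*)}=\mr{Gr}_{\mb{e}}(M)^T$. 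By the computation in its proof, the $\C^*$-action induced by $\chi$ coincides with the one attached to the attractive grading via \eqref{eq:C*ActionAttractiveGrading}, and Lemma~\ref{lem:fixedPtsAttractiveGrading} describes its fixed locus as the finite set indexed by the choices of subsets $K_i\in\binom{[m_i]}{e_i}$. Hence $\mr{Gr}_{\mb{e}}(M)^T$ is finite, which is exactly (BB1).

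For (BB2), I would take the same $\chi$ as the required generic cocharacter. Since the $\chi(\C^*)$-action equals the attractive-grading action, the associated BB-decomposition \eqref{eqn:BBdecomposition} is precisely the decomposition $\mr{Gr}_{\mb{e}}(M)=\coprod_{L}W_L$ of Theorem~\ref{trm:cell_decomp-approx-lin-deg-aff-flag}, whose pieces $W_L$ are affine spaces. It then remains only to observe that an affine space is a rational cell: being smooth it is rationally smooth at each of its points, and the local cohomology groups $H^\bullet(W_L,W_L\setminus\{w\})$ have the required concentration in degree $2\dim_\C W_L$. Thus the BB-decomposition associated to $\chi$ consists of rational cells, giving (BB2), and combining this with (BB1) yields that $\mr{Gr}_{\mb{e}}(M)$ is BB-filterable.

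The argument is essentially bookkeeping, since the substantive work has already been carried out: the genericity of $\chi$ in Theorem~\ref{trm:genericCocharDeltan} and the affineness of the attractive cells in Theorem~\ref{trm:cell_decomp-approx-lin-deg-aff-flag} (which rests on \cite[Theorem~3.13]{Pue2020}). The only point needing a (trivial) verification is that affine cells are rational cells, so I do not anticipate any genuine obstacle.
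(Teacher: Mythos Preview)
Your proposal is correct and follows essentially the same approach as the paper: verify (BB1) via the finiteness of $\mr{Gr}_{\mb{e}}(M)^{\chi(\C^*)}=\mr{Gr}_{\mb{e}}(M)^T$ coming from Theorem~\ref{trm:genericCocharDeltan} and Lemma~\ref{lem:fixedPtsAttractiveGrading}, and verify (BB2) by noting that the $\chi$-induced BB-decomposition coincides with that of Theorem~\ref{trm:cell_decomp-approx-lin-deg-aff-flag}, whose affine cells are smooth and hence rationally smooth. Your write-up is slightly more explicit (recording projectivity and the local cohomology justification that affine cells are rational), but the argument is the same.
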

\begin{proof} $|\mr{Gr}_\mb{e}(M)^T|=|\mr{Gr}_\mb{e}(M)^{\chi(\C^*)}|<\infty$, from the proof of the previous theorem, implies Property~(BB1) of Definition~\ref{def:BB-filterable}.

Let us take the generic cocharacter $\chi\in\mathfrak{X}_*(T)$ as in Theorem~\ref{trm:genericCocharDeltan}. From the proof of the latter result, we know that the $\C^*$-action induced by $\chi$ coincides with the one coming from the attractive grading. We can hence apply Theorem~\ref{trm:cell_decomp-approx-lin-deg-aff-flag}, to deduce that $W_L$ is an affine space for any $L\in\mr{Gr}_\mb{e}(M)$. So it is smooth (and hence rationally smooth). This gives us (BB2).
\end{proof}

Recall that any point of $W_L$, corresponding to the collection $(K_i)_{i\in [n]}\in \prod \binom{[m_i]}{e_i}$, can be described by a collection of tuples \[\Big( \big(u^{(i)}_{j,k}\mid k\in K_i, \ j\not\in K_i,\ j\in [m_i]\setminus[k]\big)\Big)_{i\in [n]}\] as in \eqref{eqn:CoordinateDescriptionPointCell}. In order to describe the $T$-action on $W_L$, in terms of such a description, we introduce some notation: for a basis vector $v^{(i)}_k$ we denote by $s_k^{(i)}$ the segment on which it lies on, and by $p_k^{(i)}$ its position. Then it is immediate to see that
\begin{equation}\label{eqn:TActionCoordinates}
\gamma. \Big( \big(u^{(i)}_{j,k}\big)\Big)=\Big( \big(u^{(i)}_{j,k}\gamma_0^{p_j^{(i)}-p_k^{(i)}}\gamma_{s_j^{(i)}}\gamma_{s_k^{(i)}}^{-1}\big)\Big).
\end{equation}
\begin{ex}
The points in the cell $\mathcal{C}(L_2)$ from Example~\ref{ex:C-fixed-points-and-cells} are described by the following collection \[ \Big( \big(u_{2,1}^{(1)} \big), \big( u_{2,1}^{(2)}, u_{4,1}^{(2)}, u_{4,3}^{(2)} \big)\Big) \]
with relations as computed in that example. The coefficient quiver of the representation $M$ of this running example is shown in Example~\ref{ex:C-fixed-points} and has four segments. Hence $T= (\C^*)^{4+1}$ acts on the quiver Grassmannian $\mr{Gr}_\mb{e}(M)$. It follows from the position of the vertices in the coefficient quiver that
\begin{align*}
    \gamma.u_{2,1}^{(1)} &= u_{2,1}^{(1)}\gamma_0^0\gamma_2\gamma_1^{-1}\\
    \gamma.u_{2,1}^{(2)} &= u_{2,1}^{(2)}\gamma_0^0\gamma_2\gamma_1^{-1}\\
    \gamma.u_{4,1}^{(2)} &= u_{4,1}^{(2)}\gamma_0^1\gamma_4\gamma_1^{-1}\\
    \gamma.u_{4,3}^{(2)} &= u_{4,3}^{(2)}\gamma_0^0\gamma_4\gamma_3^{-1}
\end{align*}
because
\[ s^{(i)}_k=k, p^{(1)}_1=p^{(1)}_2=1, p^{(2)}_1=p^{(2)}_2=2 \ \mr{and} \ p^{(2)}_3=p^{(2)}_4=1 \]
hold in our running example.
\end{ex}

\section{GKM-Variety Structure}\label{sec:GKM-VarietyStructure}
\subsection{One-dimensional Torus Orbits}
We deal now with the one-dimensional torus orbits on $\mr{Gr}_{\mb{e}}(M)$, where, as in the previous sections, $M$ is a nilpotent representation of $\Delta_n$. We recall that for a basis vector $v_j^{(i)}\in B^{(i)}$ such that $M_iv^{(i)}_j\neq 0$, we have denoted by $j'$ the unique element in $[m_{i+1}]$ such that $M_iv^{(i)}_j=v_{j'}^{(i+1)}$. 

Before proving that $T$ acts with finitely many one-dimensional orbits, we give a definition.
\begin{defi}\label{dfn:terminal}Let $W_L$ be the cell corresponding to the collection $(K_i)_{i\in [n]}\in\prod \binom{[m_i]}{e_i}$. The triple $(i,j,k)$, with $i\in [n]$, $k\in K_i$ and $j\in [m_i]\setminus ([k]\cup K_{i})$, is said to be \f{terminal} for $W_L$ if either $ M_iv^{(i)}_{j}=0$ or $j'\in K_{i+1}$ and for all $v^{(i')}_{\ell}$ with $s^{(i')}_{\ell}=s^{(i)}_{k}$, $p^{(i')}_{\ell}<p^{(i)}_{k}$ and $\ell \in K_{i'}$ there exists a $v^{(i')}_{q}$ with $s^{(i')}_{q}=s^{(i)}_{j}$ and $p^{(i)}_{k}-p^{(i')}_{\ell}=p^{(i)}_{j}-p^{(i')}_{q}$.
\end{defi}
Here we used the same notation as in the proof of Corollary~\ref{cor:BBfiltrDeltan}.
\begin{rem}
Notice that, once the collection $(K_i)_{i\in [n]}$ is fixed, for any $k \in K_i$, there is at most one vertex $v^{(i)}_{j}$ on each segment such that $(i,j,k)$ is terminal. The only case in which a segment has no such vertex, is when the starting point of the segment is contained in one of the $K_j$'s (and hence the whole segment is contained in $\bigcup_{i\in [n]} K_i$). 
\end{rem}
\begin{ex}
The terminal triples for the cell of $L_2$ from Example~\ref{ex:C-fixed-points} are $(2,2,1)$ and $(2,4,3)$. The corresponding terminal vertices are $v^{(2)}_{2}$ and $v^{(2)}_{4}$.
\end{ex}

\begin{prop}\label{prop:finitely-many-1-dim-orb} Let $M$ be a nilpotent representation of $\Delta_n$ with $d_0$ indecomposable direct summands, and let $\mb{e} \leq \bdim M$ be such that $\mr{Gr}_\mb{e}(M)$ is non-empty. Let $T=(\C^*)^{d_0+1}$ act as in \S\ref{sec:act-bigger-T}. Then, 
\begin{enumerate}
\item the number of one-dimensional $T$-orbits on $\mr{Gr}_{\mb{e}}(M)$ is finite;
\item the one-dimensional orbits contained in the cell $W_L$ are parametrised by the terminal triples for $W_L$.
\end{enumerate}

\end{prop}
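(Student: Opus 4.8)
The plan is to reduce the whole statement to one cell at a time and then to a linear-algebra fact about diagonalisable torus actions. By Theorem~\ref{trm:genericCocharDeltan} the $\C^*$-action underlying the cellular decomposition of Theorem~\ref{trm:cell_decomp-approx-lin-deg-aff-flag} comes from a cocharacter of $T$, so each cell $W_L$ is $T$-stable and every $T$-orbit lies in a unique $W_L$; it thus suffices to count one-dimensional orbits inside a fixed $W_L$. By \eqref{eqn:CoordinateDescriptionPointCell} the cell $W_L$ is an affine space whose coordinates $u_{j,k}^{(i)}$ are those triples $(i,j,k)$ surviving the relations \eqref{eqn1:CoordinateDescriptionConditions}--\eqref{eqn2:CoordinateDescriptionConditions}, and these surviving free coordinates are exactly the terminal triples of Definition~\ref{dfn:terminal}: relation \eqref{eqn1:CoordinateDescriptionConditions} eliminates precisely the coordinates with $M_iv_j^{(i)}\neq 0$ and $j'\notin K_{i+1}$, i.e. the non-terminal ones. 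Formula \eqref{eqn:TActionCoordinates} shows $T$ acts diagonally, the coordinate $u_{j,k}^{(i)}$ carrying the character
\[
\chi_{(i,j,k)}=\big(p_j^{(i)}-p_k^{(i)}\big)\,\epsilon_0+\epsilon_{s_j^{(i)}}-\epsilon_{s_k^{(i)}}\in\mathfrak{X}^*(T),
\]
where $\epsilon_0,\dots,\epsilon_{d_0}$ is the standard basis of the character lattice.

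For a diagonal torus action on affine space the orbit of a point with support $S$ has dimension equal to the rank of $\{\chi_\alpha\mid \alpha\in S\}$. Hence, once I know that the characters attached to terminal triples are nonzero and pairwise non-proportional, a one-dimensional orbit must be supported on a single coordinate axis, and each punctured axis is a single orbit (a nonzero character makes $T\to\C^*$ surjective). This proves both assertions simultaneously: the one-dimensional orbits in $W_L$ are in bijection with the terminal triples for $W_L$, and summing the finitely many terminal triples over the finitely many cells yields finiteness in part~(1).

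The heart of the argument, and the step I expect to be the main obstacle, is therefore the claim that distinct terminal triples give pairwise non-proportional characters. First I would show $s_j^{(i)}\neq s_k^{(i)}$ for every terminal triple: the coordinate $u_{j,k}^{(i)}$ only occurs for $j>k$, so $\mr{wt}(v_j^{(i)})>\mr{wt}(v_k^{(i)})$ by (AG1); if $v_j^{(i)},v_k^{(i)}$ lay on one segment, then (AG2) with $d(a)=D>0$ would force $v_j^{(i)}$ to sit at a later position than $v_k^{(i)}$, and since $L$ is a subrepresentation its support $(K_i)_i$ is closed under the maps of $M$, so $v_k^{(i)}\in K_i$ would drag the successor $v_j^{(i)}$ into $K_i$, contradicting $j\notin K_i$. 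Consequently $\epsilon_{s_j^{(i)}}-\epsilon_{s_k^{(i)}}$ is a genuine difference of two distinct basis vectors, so any proportionality $\chi_{(i,j,k)}=\lambda\,\chi_{(\tilde\imath,\tilde\jmath,\tilde k)}$ forces $\lambda=\pm1$. Pairing against the generic cocharacter $\chi$ of Theorem~\ref{trm:genericCocharDeltan}, one has $\langle\chi_{(i,j,k)},\chi\rangle=\mr{wt}(v_j^{(i)})-\mr{wt}(v_k^{(i)})>0$ on every free coordinate of the attractive cell, which rules out $\lambda=-1$; thus proportional forces equal.

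It then remains to prove injectivity of $(i,j,k)\mapsto\chi_{(i,j,k)}$. Here the plan is to read the data back off the character: the $+1$-entry singles out the segment $s_j^{(i)}$, on which the terminality condition together with successor-closedness (the selected vertices of a segment form a suffix) pins $v_j^{(i)}$ down to the unique non-selected vertex whose successor is selected or absent, thereby recovering the vertex $i$; the $-1$-entry recovers $s_k^{(i)}$, and the $\epsilon_0$-coefficient $p_j^{(i)}-p_k^{(i)}$ then recovers the position $p_k^{(i)}$ and hence $k$. I expect the bookkeeping for this last uniqueness, especially when a segment winds several times around $\Delta_n$ through the vertex $i$, to be the most delicate point, and would control it by exploiting that on each segment the non-selected vertices form a prefix so that the terminal non-selected vertex is unique.
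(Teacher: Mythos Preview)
Your strategy is sound and the character analysis is the right engine, but the route differs from the paper's and there is one step you have not fully justified.

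\textbf{The gap.} You assert that the free affine coordinates on $W_L$ are precisely the terminal triples, saying that \eqref{eqn1:CoordinateDescriptionConditions} ``eliminates precisely the non-terminal ones''. Two things are swept under the rug. First, \eqref{eqn1:CoordinateDescriptionConditions} is only stated under the hypothesis $M_iv_k^{(i)}\neq 0$; you should note (via (QM1)) that if $M_iv_k^{(i)}=0$ then $k>q_i$, hence $j>q_i$ and $M_iv_j^{(i)}=0$ too, so every non-terminal triple really is covered. Second, and more seriously, you ignore \eqref{eqn2:CoordinateDescriptionConditions}, which imposes \emph{further} relations of the form $u_{h,k'}^{(i+1)}=-\sum(\text{quadratic})$; a priori these could constrain terminal coordinates (e.g.\ when $v_h^{(i+1)}$ is the start of a length-one segment), and you must argue that they do not. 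This is exactly where the paper invests its effort: rather than isolating free coordinates, it works with \emph{all} $u^{(i)}_{j,k}$ subject to the relations, and runs an induction on \eqref{eqn1:CoordinateDescriptionConditions} to show that on a one-dimensional orbit every nonzero coordinate equals the value at the unique terminal triple downstream. It then checks directly that the explicit point $\mu$ satisfies both \eqref{eqn1:CoordinateDescriptionConditions} and \eqref{eqn2:CoordinateDescriptionConditions}. In effect the paper \emph{derives} ``terminal $=$ free'' as a corollary (the dimension formula right after the proposition), whereas you are assuming it as input.

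\textbf{Comparison.} Once ``terminal $=$ free'' is in hand, your argument is cleaner than the paper's: the proof that $s_j^{(i)}\neq s_k^{(i)}$ via successor-closedness, the use of the pairing with the generic cocharacter to rule out $\lambda=-1$, and the recovery of $(i,j,k)$ from the character by locating the unique non-selected vertex on segment $s_j$ whose successor is selected (or absent) are all correct and make the bijection in part~(2) transparent. The paper never explicitly isolates the injectivity of $(i,j,k)\mapsto\chi_{(i,j,k)}$ on terminal triples, though it is implicit in the claim that the points $\mu$ for distinct terminal triples lie on distinct orbits. So your character lemma actually clarifies a point the paper leaves tacit; what you owe in exchange is either a citation to \cite{Pue2020} for the explicit free parametrisation of $W_L$, or an argument that \eqref{eqn2:CoordinateDescriptionConditions} is redundant once the non-terminal coordinates are eliminated via \eqref{eqn1:CoordinateDescriptionConditions}.
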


\begin{proof} Since any cell $W_L$ is $T$-stable, any $T$-orbit is contained in a unique cell and we can therefore apply the coordinate description from \eqref{eqn:CoordinateDescriptionPointCell}, to analyse the $T$-orbit of a point. Assume to have a point in the cell corresponding to the collection $(K_i)_{i\in [n]}$ and consider its coordinate description $((u^{(i)}_{j,k}))$.

By \eqref{eqn:TActionCoordinates}, 
\[
T.\Big( \big(u^{(i)}_{j,k}\big)\Big)=\Big\{\Big( \big(u^{(i)}_{j,k}\gamma_0^{p_j^{(i)}-p_k^{(i)}}\gamma_{s_j^{(i)}}\gamma_{s_k^{(i)}}^{-1}\big)_{j,k}\Big)_i\mid \gamma_0, \gamma_1, \ldots, \gamma_{d_0}\in\C^*\Big\}.
\]
Observe that $((u^{(i)}_{j,k}))$ consists of all 0's if and only if the corresponding point is the fixed point of $W_L$. Since we are interested in one-dimensional orbits, we can assume that there is at least one non-zero entry, say $u^{(i)}_{j,k}$.
 We see immediately that if $u^{(i)}_{j,k}\neq 0$, then the orbit is one-dimensional only if $u^{(r)}_{h,\ell}=0$ unless $s_h^{(r)}=s_j^{(i)}$, $s_{\ell}^{(r)}=s_k^{(i)}$, and  $p_h^{(r)}-p_\ell^{(r)}=p_j^{(i)}-p_k^{(i)}$. 

Since we have assumed that $u^{(i)}_{j,k}\neq 0$, there exists a terminal triple $(r,h,\ell)$ such that $s_{\ell}^{(r)}=s_k^{(i)}$,  $s_{h}^{(r)}=s_j^{(i)}$ and  $p_{h}^{(r)}-p_j^{(i)}\geq 0$. We show now by induction on $p_{h}^{(r)}-p_j^{(i)}$ that $u^{(i)}_{j,k}=u_{h,\ell}^{(r)}$. If  $p_{h}^{(r)}-p_j^{(i)}=0$, then $(i,j,k)=(r,h,\ell)$ and the statement is trivial. Otherwise, $M_iv^{(i)}_{j}\neq 0$ and by induction $u^{(i+1)}_{j',k'}=u_{h,\ell}^{(r)}$. Observe that if $q\in K_{i+1}\setminus K_i'$, then $s^{(i+1)}_{q}\neq s^{(i)}_k$ and hence $u_{j',q}^{(i+1)}=0$. Therefore, by  \eqref{eqn1:CoordinateDescriptionConditions}, we conclude that $u_{j,k}^{(i)}=u_{h,\ell}^{(r)}$ as desired. This also implies $p_h^{(r)}-p_\ell^{(r)}=p_j^{(i)}-p_k^{(i)}$.

From what we have just discussed, we conclude that there is at most one orbit of dimension one for any terminal triple. Since there are only finitely many terminal triples, we deduce that $T$ acts on $\mr{Gr}_{\mb{e}}(M)$ with a finite number of one-dimensional orbits.

To prove that there is a one-dimensional orbit for any terminal triple $(r,h,\ell)$, we only have to observe that the tuple $(u^{(i)}_{j,k})$ given by 
\[\mu^{(i)}_{j,k}=\left\{
\begin{array}{ll}
1&\hbox{ if }s^{(i)}_j=s^{(r)}_h, \ s^{(i)}_k=s^{(r)}_\ell\hbox{ and }p_h^{(r)}-p^{(i)}_j=p_\ell^{(r)}-p_k^{(i)}\geq 0,\\
0&\hbox{otherwise}
\end{array}
\right.
\] satisfies both \eqref{eqn1:CoordinateDescriptionConditions} and \eqref{eqn2:CoordinateDescriptionConditions}, and its orbit is one-dimensional by the above considerations.
\end{proof}
\begin{cor}\label{cor:CellDimension}
Let $M$ be a nilpotent representation of $\Delta_n$ with $d_0$ indecomposable direct summands, and let $\mb{e} \leq \bdim M$ be such that $\mr{Gr}_\mb{e}(M)$ is non-empty. Then for any $L\in \mr{Gr}_\mb{e}(M)^T$
\[
\dim W_L=\#\{(i,j,k)\mid (i,j,k) \hbox{ is a terminal triple for }W_L\}.
\]
\end{cor}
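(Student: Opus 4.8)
The plan is to reduce the statement to Proposition~\ref{prop:finitely-many-1-dim-orb}(2), which already identifies the one-dimensional $T$-orbits contained in $W_L$ with the terminal triples for $W_L$. It therefore suffices to prove the separate identity
\[
\dim W_L=\#\{\text{one-dimensional }T\text{-orbits contained in }W_L\}.
\]
By Theorem~\ref{trm:cell_decomp-approx-lin-deg-aff-flag}, $W_L$ is an affine space containing the single $T$-fixed point $L$, and by \eqref{eqn:fixedPtsParameterisation} it is exactly the set of points attracted to $L$ by the generic cocharacter $\chi$. In the coordinates $u^{(i)}_{j,k}$ of \eqref{eqn:CoordinateDescriptionPointCell}, the $T$-action is the diagonal linear action \eqref{eqn:TActionCoordinates}, where $u^{(i)}_{j,k}$ is scaled by the character $\alpha_{(i,j,k)}:=(p_j^{(i)}-p_k^{(i)})\,\varepsilon_0+\varepsilon_{s_j^{(i)}}-\varepsilon_{s_k^{(i)}}$, with $\varepsilon_0,\dots,\varepsilon_{d_0}$ the standard characters of $T=(\C^*)^{d_0+1}$.

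First I would observe that every one-dimensional orbit $E\subseteq W_L$ has $L$ in its closure: since $\chi(\C^*)\subseteq T$ we have $\chi(z).v\in E$ for $v\in E$ and $z\in\C^*$, while $\lim_{z\to0}\chi(z).v=L$ by \eqref{eqn:fixedPtsParameterisation}, so $L\in\overline{E}$. Consequently all one-dimensional orbits of $W_L$ are detected at $L$, and each contributes a $T$-weight line inside the tangent representation $T_LW_L$, whose dimension equals $\dim W_L$ since $W_L$ is smooth. Now Proposition~\ref{prop:finitely-many-1-dim-orb}(1) guarantees finiteness of these orbits, and I would use this to exclude repetitions and rational proportionalities among the weights of $T_LW_L$: if a weight $\alpha$ occurred with multiplicity $\geq 2$, the projectivisation of its weight space would yield a $\Pbb^1$-family of one-dimensional orbits, and if two weights were rationally proportional the two-dimensional sum of their weight spaces would again produce infinitely many one-dimensional orbits, both contradicting finiteness. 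Hence the weights of $T_LW_L$ are pairwise non-proportional and each weight space is a line, so the one-dimensional orbits adjacent to $L$ are precisely the $\dim W_L$ punctured weight lines. This yields the displayed identity and, combined with Proposition~\ref{prop:finitely-many-1-dim-orb}(2), the corollary.

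The delicate point I anticipate is the passage from weight lines in the linear model $T_LW_L$ to genuine one-dimensional orbits in $W_L$, since the $T$-action on the abstract affine space $W_L$ need not be linearisable a priori. I would handle this by staying inside the explicit coordinates: the action \eqref{eqn:TActionCoordinates} is literally diagonal on the ambient variables $u^{(i)}_{j,k}$, and the proof of Proposition~\ref{prop:finitely-many-1-dim-orb} exhibits, for each terminal triple $(r,h,\ell)$, an explicit representative $\mu$ whose orbit is one-dimensional and whose nonzero coordinates all carry the single character $\alpha_{(r,h,\ell)}$. These representatives realise the weight lines of $T_LW_L$ concretely, so the bijection between one-dimensional orbits and tangent weights can be read off directly from \eqref{eqn1:CoordinateDescriptionConditions}--\eqref{eqn2:CoordinateDescriptionConditions} without appealing to any abstract linearisation theorem.

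As an alternative I would bypass the tangent space entirely and argue by downward induction along each segment that \eqref{eqn1:CoordinateDescriptionConditions} expresses every non-terminal coordinate in terms of terminal ones, so that the terminal triples index a candidate system of free coordinates on $W_L$. In this route the genuine obstacle is to verify that the vanishing relations \eqref{eqn2:CoordinateDescriptionConditions} impose no further constraint on the terminal coordinates; once this is checked, the terminal triples form a coordinate system on the affine space $W_L$ and the equality $\dim W_L=\#\{\text{terminal triples}\}$ is immediate.
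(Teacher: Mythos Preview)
Your argument is correct and follows the same overall plan as the paper: reduce via Proposition~\ref{prop:finitely-many-1-dim-orb}(2) to the equality $\dim W_L=\#\{\text{one-dimensional }T\text{-orbits in }W_L\}$, and then establish the latter. The difference is in how that last equality is obtained. The paper simply invokes \cite[\S1.4, Corollary~2]{Brion1999}: since $W_L$ is a rationally smooth $T$-variety with the unique isolated fixed point $L$ and finitely many closed $T$-curves through $L$, Brion's result gives directly that this number equals $\dim W_L$. You instead reprove the needed special case by hand, using the explicit diagonal action \eqref{eqn:TActionCoordinates} on the tangent space at $L$ together with the finiteness from Proposition~\ref{prop:finitely-many-1-dim-orb}(1) to force the $T$-weights to be multiplicity-free and pairwise non-proportional. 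Your route is more self-contained and makes the geometry of the weight lines explicit; the paper's is a one-line citation. Your observation that every one-dimensional orbit in $W_L$ has $L$ in its closure (via the attractive cocharacter) is exactly the step needed to ensure all orbits are seen at $L$, and the paper implicitly uses the same fact when applying Brion's corollary. Your alternative coordinate-elimination approach would also work, and indeed amounts to unpacking the proof of Theorem~\ref{trm:cell_decomp-approx-lin-deg-aff-flag} from \cite{Pue2020}.
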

\begin{proof}By the previous result, all one-dimensional $T$-orbits in $W_L$, which contain the fixed point $L$ in their closures, are parametrised by the set of terminal triples for $W_L$. By \cite[\S~1.4, Corollary~2]{Brion1999}, the number of closed curves through $L$ (which is finite) coincides with the dimension of $W_L$, since $L$ is the unique isolated fixed point of the rationally smooth $T$-variety $W_L$.
\end{proof}

\begin{trm}
\label{trm:t-stable_filtration-equi-cycle}
Let $M$ be a nilpotent representation of $\Delta_n$ with $d_0$ indecomposable direct summands, and let $\mb{e} \leq \bdim M$ be such that $\mr{Gr}_\mb{e}(M)$ is non-empty. Let $T := (\C^{*})^{d_0+1}$ act on $\mr{Gr}_\mb{e}(M)$ as in \S\ref{sec:act-bigger-T}. Then $(\mr{Gr}_\mb{e}(M),T)$ is a projective BB-filterable GKM-variety.
\end{trm}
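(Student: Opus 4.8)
The plan is to observe that the statement is essentially a repackaging of results already established, so the proof reduces to verifying the three defining properties — projectivity, BB-filterability, and the GKM property — and then invoking the general machinery of Section~\ref{sec:TorusActionCellsGKMTheory}.

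First I would record projectivity. By Remark~\ref{rem:QuiverGrIndptBasis}, $\mr{Gr}_\mb{e}(M)$ embeds as a closed subvariety of the classical Grassmannian of $\sum_i e_i$-dimensional subspaces of $V=\bigoplus_{i\in\Z/n\Z}M_i$, namely the locus of graded subspaces $(U_i)_i$ of the prescribed dimension vector satisfying $M_a(U_{s_a})\subseteq U_{t_a}$ for all arrows $a$. These incidence conditions are closed and the ambient Grassmannian is projective, hence so is $\mr{Gr}_\mb{e}(M)$.

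Next, BB-filterability is exactly the content of Corollary~\ref{cor:BBfiltrDeltan}: it provides both (BB1), the finiteness of $\mr{Gr}_\mb{e}(M)^T$, and (BB2), through the generic cocharacter $\chi$ of Theorem~\ref{trm:genericCocharDeltan} whose associated BB-decomposition consists of the affine cells $W_L$ of Theorem~\ref{trm:cell_decomp-approx-lin-deg-aff-flag}, each of which is smooth and therefore rationally smooth. For the GKM property I would then verify that the $T$-action is skeletal. Finiteness of the fixed-point set is (BB1), already in hand, while finiteness of the one-dimensional $T$-orbits is precisely Proposition~\ref{prop:finitely-many-1-dim-orb}(1). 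Having established skeletality together with BB-filterability, I would apply Theorem~\ref{trm:t-stable_filtration-general-setting} to the filtration $\emptyset = Z_0 \subset \dots \subset Z_m = \mr{Gr}_\mb{e}(M)$: part~(3) combined with Lemma~\ref{lma:equiformality} gives equivariant formality of each $Z_i$, and part~(4) then yields that every $Z_i$, in particular $Z_m = \mr{Gr}_\mb{e}(M)$, is a GKM-variety. This completes the argument.

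The genuine content sits in the inputs rather than in the assembly, so I do not expect a hard step here: the two technical ingredients — the affineness of the attractive cells (Theorem~\ref{trm:cell_decomp-approx-lin-deg-aff-flag}) and the finiteness of the one-dimensional orbits (Proposition~\ref{prop:finitely-many-1-dim-orb}) — have been proved earlier. The only point deserving a moment of care is that the skeletality required to invoke Theorem~\ref{trm:t-stable_filtration-general-setting}(4) concerns all one-dimensional $T$-orbits, whereas the filtration is built from the $\C^*$-decomposition for the specific cocharacter $\chi$; this causes no trouble, since Proposition~\ref{prop:finitely-many-1-dim-orb} bounds the one-skeleton of the full $T$-action independently of any chosen ordering of the cells.
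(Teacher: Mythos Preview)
Your proof is correct and follows essentially the same route as the paper: invoke Corollary~\ref{cor:BBfiltrDeltan} for BB-filterability, Proposition~\ref{prop:finitely-many-1-dim-orb} for finiteness of one-dimensional orbits (hence skeletality), and then Theorem~\ref{trm:t-stable_filtration-general-setting}(4) to conclude the GKM property. Your explicit justification of projectivity via Remark~\ref{rem:QuiverGrIndptBasis} and your unpacking of the appeal to Theorem~\ref{trm:t-stable_filtration-general-setting} are more detailed than the paper's brief sketch, but the argument is the same.
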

\begin{proof}
By Corollary~\ref{cor:BBfiltrDeltan}, $\mr{Gr}_\mb{e}(M)$ is a BB-filterable projective $T$-variety. By Proposition~\ref{prop:finitely-many-1-dim-orb}, the number of one-dimensional $T$-orbits is finite. Hence the $T$-action on $\mr{Gr}_\mb{e}(M)$ is skeletal, since the number of one-dimensional $T$-orbits is finite by Theorem~\ref{trm:genericCocharDeltan}. Then Theorem~\ref{trm:t-stable_filtration-general-setting} implies that $(\mr{Gr}_\mb{e}(M),T)$ is also a GKM-variety.
\end{proof}
\subsection{Combinatorial description of the Moment Graph}
We have just proven that any non-empty quiver Grassmannian for a nilpotent representation of the quiver $\Delta_n$, admits the structure of a GKM-variety. In order to be able to apply the techniques presented in \S\ref{sec:TorusActionCellsGKMTheory}, first of all we  need to describe the moment graph arising from the torus action.  
\begin{defi}\label{dfn:successorClosed}
Let $Q$ be a quiver, and let $M$ be an object of $\mathrm{rep}_\Kbb(Q)$, and let $\mb{e} \leq \bdim M$ be such that $\mr{Gr}_\mb{e}(M)$ is non-empty. A subquiver $Q'$ of $Q(M,B)$ is said to be \f{successor closed} with dimension vector $\mb{e}$ if $\#\;Q_0'\cap B^{(i)}=e_i$ for any $i\in [n]$, and if for all $a\in Q(M,B)_1$ with $s_a\in Q'_0$, then also $t_a\in Q'_0$.
\end{defi}
 
Denote by $\mr{SC}_\mb{e}^Q(M)$ the set of successor closed subquivers of $Q(M,B)$ with dimension vector $\mb{e}$. Notice that each $S \in \mr{SC}_\mb{e}^Q(M)$ is a collection of (successor closed) subsegments of the segments of $Q(M,B)$. For the rest of this section, we restrict to the case where $Q=\Delta_n$ and a basis $B$ such that $Q(M,B)$ is aligned.

\begin{defi}Let $S \in \mr{SC}_\mb{e}^{\Delta_n}(M)$. A connected subquiver of a segment of $S$ is called a {\bf  movable part} of $S$ if it has the same starting point as the segment.  
\end{defi}
For $S \in \mr{SC}_\mb{e}^{\Delta_n}(M)$, we denote by $\mr{MP}(S)$ the set of movable parts of $S$. Notice that all the segments of $S$ are contained in $\mr{MP}(S)$.

\begin{defi}\label{defn:FundMutation}For $S,H \in \mr{SC}_\mb{e}^{\Delta_n}(M)$ we say that $H$ is obtained from $S$ by a \f{fundamental mutation} if we obtain $H$ from $S$ by moving down exactly one movable part of $S$. 
\end{defi}
The definition of successor closed subquiver is known and for example used in \cite{Cerulli2011}, whereas movable parts and fundamental mutations are new to the current paper.
\begin{rem}Whenever we speak about subquivers of $Q(M,B)$, we mean, by abuse of terminology,  full subquivers, so that they are uniquely determined by their set of vertices. In the above definition, the quiver $H$ is the full subquiver of $Q(M,B)$, whose set of vertices is obtained by removing from $S_0$ the set of vertices belonging to the movable part and adding the set of vertices corresponding to the target vertices of the mutation. 
\end{rem}
\begin{rem}\label{rem:mutation1}
In Definition~\ref{defn:FundMutation}, downwards means that the operation is index increasing in our preferred basis $B$ (such that $Q(M,B)$ is aligned): if $v^{(i)}_k$ is the starting point of the movable part of $S$, then it can only be moved to some $v^{(i)}_j$ with $j>k$ (and $v^{(i)}_j\not \in S_0$).
 The condition $S,H \in \mr{SC}_\mb{e}^{\Delta_n}(M)$ implies that the target $v^{(r)}_h$ of the end point $v^{(r)}_\ell$ of the moved part is 
 \begin{enumerate}
 \item either the end point of the segment $s^{(r)}_h$ of $Q(M,B)$ (i.e., $M_iv_h^{(r)}=0$) 
 \item or the predecessor (in $Q(M,B)$) of the starting point of the segment  $s^{(r)}_h\cap H$ (i.e. $v_{h'}^{(r+1)}\in H_0$).
\end{enumerate}  
Notice that a fundamental mutation of $S\in  \mr{SC}_\mb{e}^{\Delta_n}(M)$ is uniquely determined by the vertices $v^{(r)}_\ell$ and $v^{(r)}_h$ above. For convenience, we will denote such a fundamental mutation by $\mu_{(s_\ell^{(r)}, p_\ell^{(r)}),( s_h^{(r)}, p_h^{(r)})}$. 
\end{rem}
\begin{ex}Consider $M$ and $\mb{e}$ as in Example~\ref{ex:C-fixed-points}. To describe an element $S$ of $\mr{SC}_\mb{e}^{\Delta_2}(M)$ we take $Q(M,B)$ and fill in white the vertices which are not contained in $S_0$. The fixed points $L_1$, $L_2$ and $L_4$ have the following coefficient quivers:
\begin{center}
\begin{tikzpicture}[scale=.4]
	\node at (-0.7,-2.5) {$S_1 = $};	
	\draw[fill=black] (1,-1) circle (.15); 
	\draw[fill=white] (1,-2) circle (.15); 

	\draw[fill=black] (2,-1) circle (.15); 
	\draw[fill=black] (2,-2) circle (.15); 
	\draw[fill=white] (2,-3) circle (.15); 
	\draw[fill=white] (2,-4) circle (.15);

	\draw[arrows={-angle 90}, shorten >=2, shorten <=2]  (1,-1) -- (2,-1); 
	\draw[arrows={-angle 90}, shorten >=2, shorten <=2]  (1,-2) -- (2,-2); 
 
\end{tikzpicture} 
$\qquad$
\begin{tikzpicture}[scale=.4]
	\node at (-0.7,-2.5) {$S_2 = $};	
	\draw[fill=black] (1,-1) circle (.15); 
	\draw[fill=white] (1,-2) circle (.15); 

	\draw[fill=black] (2,-1) circle (.15); 
	\draw[fill=white] (2,-2) circle (.15); 
	\draw[fill=black] (2,-3) circle (.15); 
	\draw[fill=white] (2,-4) circle (.15);

	\draw[arrows={-angle 90}, shorten >=2, shorten <=2]  (1,-1) -- (2,-1); 
	\draw[arrows={-angle 90}, shorten >=2, shorten <=2]  (1,-2) -- (2,-2); 
 
\end{tikzpicture} 
$\qquad$
\begin{tikzpicture}[scale=.4]
	\node at (-0.7,-2.5) {$S_4 = $};
	\draw[fill=white] (1,-1) circle (.15); 
	\draw[fill=black] (1,-2) circle (.15); 

	\draw[fill=black] (2,-1) circle (.15); 
	\draw[fill=black] (2,-2) circle (.15); 
	\draw[fill=white] (2,-3) circle (.15); 
	\draw[fill=white] (2,-4) circle (.15);

	\draw[arrows={-angle 90}, shorten >=2, shorten <=2]  (1,-1) -- (2,-1); 
	\draw[arrows={-angle 90}, shorten >=2, shorten <=2]  (1,-2) -- (2,-2); 
 
\end{tikzpicture} 
\end{center}
The mutation $\mu_{(2,1),(3,0)}: S_1 \to S_2$ is subject to Condition~(1) of Remark~\ref{rem:mutation1} whereas the mutation $\mu_{(1,0),(2,0)}:S_1 \to S_4$ is subject to Condition~(2) of Remark~\ref{rem:mutation1}.
\end{ex}
\begin{rem}
There are no restrictions on the difference $j-k$ in Remark~\ref{rem:mutation1}, so that it can happen that a fundamental mutation is the concatenation of two (or more) other fundamental mutations.
\end{rem}

\begin{ex}\label{ex:fund-mutation}
Let $Q = \Delta_2$ and take the $\Delta_2$-representation 
\[M = U(1;1)\oplus U(2;2)\oplus U(2;2) \oplus U(2;1)\]
and $\mb{e} := (1,2)$. We now apply the above constructions in this setting. 

The quiver Grassmannian $\mr{Gr}_{\mb{e}}(M)$ is isomorphic to the Feigin degeneration of the classical flag variety $\mathcal{F}l_3$, as explained in \cite[Proposition~2.7]{CFR2012}. With the algorithm as described in the proof of Proposition~\ref{prop:AlignabilityDeltan}, we obtain the following coefficient quiver for $M$. The successor closed subquivers in the set $\mr{SC}_\mb{e}^{\Delta_2}(M)$ are:

\begin{center}
\begin{tikzpicture}[scale=0.35]
\node at (-1.7,2) {$S_1 = $};
\draw[fill=white] (0,3) circle (.12);
\draw[arrows={-angle 90}, shorten >=2, shorten <=2]  (0,3) -- (1.5,3);
\draw[fill=white] (1.5,3) circle (.12);
\draw[fill=white] (0,2) circle (.12);
\draw[arrows={-angle 90}, shorten >=2, shorten <=2]  (0,2) -- (1.5,2);
\draw[fill=black] (1.5,2) circle (.12);
\draw[fill=black] (0,1) circle (.12);
\draw[fill=black] (1.5,1) circle (.12);
\end{tikzpicture}
\begin{tikzpicture}[scale=0.35]
\node at (-1.7,2) {$S_2 = $};
\draw[fill=white] (0,3) circle (.12);
\draw[arrows={-angle 90}, shorten >=2, shorten <=2]  (0,3) -- (1.5,3);
\draw[fill=black] (1.5,3) circle (.12);
\draw[fill=white] (0,2) circle (.12);
\draw[arrows={-angle 90}, shorten >=2, shorten <=2]  (0,2) -- (1.5,2);
\draw[fill=white] (1.5,2) circle (.12);
\draw[fill=black] (0,1) circle (.12);
\draw[fill=black] (1.5,1) circle (.12);
\end{tikzpicture}
\begin{tikzpicture}[scale=0.35]
\node at (-1.7,2) {$S_3 = $};
\draw[fill=white] (0,3) circle (.12);
\draw[arrows={-angle 90}, shorten >=2, shorten <=2]  (0,3) -- (1.5,3);
\draw[fill=white] (1.5,3) circle (.12);
\draw[fill=black] (0,2) circle (.12);
\draw[arrows={-angle 90}, shorten >=2, shorten <=2]  (0,2) -- (1.5,2);
\draw[fill=black] (1.5,2) circle (.12);
\draw[fill=white] (0,1) circle (.12);
\draw[fill=black] (1.5,1) circle (.12);
\end{tikzpicture}
\begin{tikzpicture}[scale=0.35]
\node at (-1.7,2) {$S_4 = $};
\draw[fill=white] (0,3) circle (.12);
\draw[arrows={-angle 90}, shorten >=2, shorten <=2]  (0,3) -- (1.5,3);
\draw[fill=black] (1.5,3) circle (.12);
\draw[fill=white] (0,2) circle (.12);
\draw[arrows={-angle 90}, shorten >=2, shorten <=2]  (0,2) -- (1.5,2);
\draw[fill=black] (1.5,2) circle (.12);
\draw[fill=black] (0,1) circle (.12);
\draw[fill=white] (1.5,1) circle (.12);
\end{tikzpicture}
\begin{tikzpicture}[scale=0.35]
\node at (-1.7,2) {$S_5 = $};
\draw[fill=white] (0,3) circle (.12);
\draw[arrows={-angle 90}, shorten >=2, shorten <=2]  (0,3) -- (1.5,3);
\draw[fill=black] (1.5,3) circle (.12);
\draw[fill=black] (0,2) circle (.12);
\draw[arrows={-angle 90}, shorten >=2, shorten <=2]  (0,2) -- (1.5,2);
\draw[fill=black] (1.5,2) circle (.12);
\draw[fill=white] (0,1) circle (.12);
\draw[fill=white] (1.5,1) circle (.12);
\end{tikzpicture}
\begin{tikzpicture}[scale=0.35]
\node at (-1.7,2) {$S_6 = $};
\draw[fill=black] (0,3) circle (.12);
\draw[arrows={-angle 90}, shorten >=2, shorten <=2]  (0,3) -- (1.5,3);
\draw[fill=black] (1.5,3) circle (.12);
\draw[fill=white] (0,2) circle (.12);
\draw[arrows={-angle 90}, shorten >=2, shorten <=2]  (0,2) -- (1.5,2);
\draw[fill=white] (1.5,2) circle (.12);
\draw[fill=white] (0,1) circle (.12);
\draw[fill=black] (1.5,1) circle (.12);
\end{tikzpicture}
\begin{tikzpicture}[scale=0.35]
\node at (-1.7,2) {$S_7 = $};
\draw[fill=black] (0,3) circle (.12);
\draw[arrows={-angle 90}, shorten >=2, shorten <=2]  (0,3) -- (1.5,3);
\draw[fill=black] (1.5,3) circle (.12);
\draw[fill=white] (0,2) circle (.12);
\draw[arrows={-angle 90}, shorten >=2, shorten <=2]  (0,2) -- (1.5,2);
\draw[fill=black] (1.5,2) circle (.12);
\draw[fill=white] (0,1) circle (.12);
\draw[fill=white] (1.5,1) circle (.12);
\end{tikzpicture}
\end{center}

Observe that the basis of $M$, as obtained with the algorithm from Proposition~\ref{prop:AlignabilityDeltan}, is a permutation of the basis described in \cite[Section~6.4]{CFFFR2017} and used in \cite[Remark~3.14]{CFR2013}. Here we order the segments ending over a fixed vertex from long to short. For their computations shorter segments are above the longer ones if they end over the same vertex (see \cite[Example~4]{CFFFR2017}). For the Feigin-degenerate flag variety, their order of segments is described in Section~\ref{subsec:FeiginDegeneration}.

To determine which fundamental mutation we are applying, we enumerated the four segments of $Q(M,B)$, increasingly from the top to the bottom and from left to right. From $S_4$, we can obtain $S_1$ via the mutation $\mu_{(1,1),(4,0)}$, or by first applying $\mu_{(2,1),(4,0)}$ which gives us $S_2$ and then applying $\mu_{(1,1),(2,1)}$.
\end{ex}	
 The combinatorics of such moves on coefficient quivers can be used to describe the moment graph, associated to the $T$-action on $\mr{Gr}_{\mb{e}}(M)$. 
 
For $(\gamma_0, \ldots, \gamma_{d_0})\in T$, we define 
\[
\epsilon_i:T\rightarrow \C^*, \quad (\gamma_0,\gamma_1, \ldots, \gamma_{d_0})\mapsto \gamma_i \qquad (i\geq 1)
\] 
and 
\[
\delta:T\rightarrow \C^*, \quad (\gamma_0, \gamma_1,\ldots, \gamma_{d_0})\mapsto \gamma_0.
\] 
 \begin{trm} 
\label{trm:comb-moment-graph}
Let $M$ be a nilpotent $\Delta_n$-representation. The vertices of the moment graph $\mathcal{G}_\mb{e}(M)$ are in bijection with the set of successor closed subquivers $SC^{\Delta_n}_\mb{e}(M)$. For $S,H \in SC^{\Delta_n}_\mb{e}(M)$ there exists an arrow from $S$ to $H$ in the moment graph if and only if there exists a fundamental mutation $\mu_{(i,p),(j,q)}(S)=H$. If this is the case, the label of such an edge is given by $\epsilon_j - \epsilon_i + (q-p)\delta$.  
\end{trm}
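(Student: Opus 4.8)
The plan is to read off both the vertices and the edges of $\mathcal{G}_\mb{e}(M)$ directly from the coordinate description of the $T$-action and to match them with the combinatorics of successor closed subquivers.

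First I would treat the vertices. By Theorem~\ref{trm:genericCocharDeltan} the cocharacter $\chi$ is generic, so $\mr{Gr}_\mb{e}(M)^T=\mr{Gr}_\mb{e}(M)^{\chi(\C^*)}$, and Lemma~\ref{lem:fixedPtsAttractiveGrading} describes the latter: a fixed point $L$ is a coordinate subspace $U_i=\langle v^{(i)}_k\mid k\in K_i\rangle$ with $K_i\in\binom{[m_i]}{e_i}$. Since the coefficient quiver of a nilpotent representation of $\Delta_n$ is a disjoint union of segments, $M_iv^{(i)}_k$ is either $0$ or a single basis vector $v^{(i+1)}_{k'}$; hence $U=(U_i)$ is a subrepresentation precisely when $k\in K_i$ and $M_iv^{(i)}_k\neq 0$ force $k'\in K_{i+1}$. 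This is exactly the successor closed condition of Definition~\ref{dfn:successorClosed}, and the numerical condition $|K_i|=e_i$ matches $\#(Q'_0\cap B^{(i)})=e_i$. Assigning to $L$ the full subquiver $S$ on $\{v^{(i)}_k\mid k\in K_i\}$ thus yields a bijection $\mr{Gr}_\mb{e}(M)^T\leftrightarrow SC^{\Delta_n}_\mb{e}(M)$, giving the vertex set.

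Next I would handle the edges. Each one-dimensional orbit $E$ lies in a unique cell $W_L$, the cells being $T$-stable, and by Proposition~\ref{prop:finitely-many-1-dim-orb} such orbits in $W_L$ are parametrised by the terminal triples $(r,h,\ell)$ for $W_L$, the orbit being the line in the coordinates \eqref{eqn:CoordinateDescriptionPointCell} on which only the ``diagonal'' entries $\mu^{(i)}_{j,k}$ of that proposition are non-zero, all equal to a single parameter $t\in\C$. The closure $\overline E$ adds two fixed points. The limit $t\to 0$ is $L$, whose subquiver is $S$. For the limit $t\to\infty$ I would rescale the spanning vectors $w^{(i)}_k=v^{(i)}_k+t\,v^{(i)}_{j}+\cdots$ by $t^{-1}$ and read off the limiting coordinate subspace: the chosen vertices lying on the segment $s^{(r)}_\ell$ in positions $\le p^{(r)}_\ell$ get replaced by their images under the position shift $p\mapsto p+(p^{(r)}_h-p^{(r)}_\ell)$ onto $s^{(r)}_h$, that is, the movable part of $S$ ending at $v^{(r)}_\ell$ is shifted down so that its endpoint lands on the terminal vertex $v^{(r)}_h$. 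This is precisely the fundamental mutation $\mu_{(s^{(r)}_\ell,p^{(r)}_\ell),(s^{(r)}_h,p^{(r)}_h)}$ of Remark~\ref{rem:mutation1}, and I would check, using its two cases (segment end, or predecessor of a chosen start), that the limit is again successor closed, so $H:=\lim_{t\to\infty}$ is a genuine vertex and $S\mapsto H$ realises the claimed bijection between one-dimensional orbits through $L$ and fundamental mutations out of $S$.

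It then remains to fix the orientation and the label. Computing the $\chi$-weight of the parameter $t=u^{(r)}_{h,\ell}$ from \eqref{eqn:TActionCoordinates} gives the exponent $\mr{wt}(v^{(r)}_h)-\mr{wt}(v^{(r)}_\ell)$, which is strictly positive by (AG1) since $h>\ell$ on the layer $B^{(r)}$; hence $t\to 0$ is the attracting direction and $t\to\infty$ the repelling one. Therefore $E\subset W_L$ with $y_E=L$, while $x_E=H\in\overline E\subset\overline{W_L}$, so by the moment graph convention the edge is oriented $S\to H$. Finally, by \eqref{eqn:TActionCoordinates} the torus acts on the coordinate $u^{(r)}_{h,\ell}$ through $\gamma\mapsto\gamma_0^{\,p^{(r)}_h-p^{(r)}_\ell}\gamma_{s^{(r)}_h}\gamma_{s^{(r)}_\ell}^{-1}$; the same character governs every diagonal coordinate because the diagonal condition forces $p^{(i)}_j-p^{(i)}_k=p^{(r)}_h-p^{(r)}_\ell$ and the two segments agree. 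In the additive notation this character is $\epsilon_{s^{(r)}_h}-\epsilon_{s^{(r)}_\ell}+(p^{(r)}_h-p^{(r)}_\ell)\delta$, which is exactly $\epsilon_j-\epsilon_i+(q-p)\delta$ for $\mu_{(i,p),(j,q)}=\mu_{(s^{(r)}_\ell,p^{(r)}_\ell),(s^{(r)}_h,p^{(r)}_h)}$, the sign ambiguity in $\alpha_E$ being resolved by this choice.

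The main obstacle I expect is the $t\to\infty$ limit in the edge step: one must verify carefully, from the explicit equations \eqref{eqn:CoordinateDescriptionPointCell}--\eqref{eqn2:CoordinateDescriptionConditions}, that rescaling the whole diagonal family of coordinates produces exactly the coordinate subspace of the mutated subquiver $H$, and that $H$ is again successor closed. Matching the data $(r,h,\ell)$ of a terminal triple with the endpoint/target data of a fundamental mutation, and ruling out that distinct terminal triples or mutations collapse to the same edge, is the combinatorial heart of the argument.
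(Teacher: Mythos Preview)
Your proposal is correct and follows essentially the same route as the paper's proof: vertices via the generic cocharacter and Lemma~\ref{lem:fixedPtsAttractiveGrading}, edges via the bijection between terminal triples and fundamental mutations (the paper invokes Remark~\ref{rem:mutation1} for this), and the label read off from \eqref{eqn:TActionCoordinates}. The paper's argument is considerably terser and leaves implicit precisely the point you flag as the main obstacle---namely, that the $t\to\infty$ limit of the orbit attached to the terminal triple $(r,h,\ell)$ is the coordinate subrepresentation corresponding to $\mu_{(s^{(r)}_\ell,p^{(r)}_\ell),(s^{(r)}_h,p^{(r)}_h)}(S)$---as well as the verification of the edge orientation; your explicit rescaling computation and the $\chi$-weight check fill these in.
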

\begin{proof}
The successor closed subquivers in the set $SC^{\Delta_n}_\mb{e}(M)$ are in bijection with the $\C^*$-fixed points of $\mr{Gr}_{\mb{e}}(M)$ \cite[Proposition 1]{Cerulli2011}. By Theorem~\ref{trm:genericCocharDeltan}, the $\C^*$-fixed points are exactly the $T$-fixed points and hence the vertices of the moment graph.

Let $S\in SC^{\Delta_n}_\mb{e}(M)$. Under the bijection from \cite[Proposition 1]{Cerulli2011}, we have that the corresponding collection $(K_i)_{i\in [n]}$ is given by $K_i:=S_0\cap B^{(i)}$. Let $L$ be the corresponding quiver representation. It follows from Remark~\ref{rem:mutation1} that there is a bijection
\[
\left\{(r,h,\ell)\mid \begin{array}{c}
 (r,h,\ell) \hbox{ is a terminal }\\
 \hbox{triple for }W_L
 \end{array}
 \right\}\leftrightarrow
 \left\{
\mu_{(i,p)(j,q)}\mid \begin{array}{c}
\hbox{there exists }H\in SC^{\Delta_n}_\mb{e}(M)\\
\hbox{ such that }H=\mu_{(i,p)(j,q)}S
\end{array}  
 \right\}
\]
which sends $(r,h,\ell)$ to the fundamental mutation $\mu_{(s_\ell^{(r)},p_\ell^{(r)}),(s_h^{(r)},p_h^{(r)})}$. The edge label is a direct consequence of the description of the $T$-action in the proof of Proposition~\ref{prop:finitely-many-1-dim-orb}.
\end{proof}
\begin{rem}
The orientation of the moment graph depends on the choice of the basis $B$ for the $Q$-representation $M$ and on the attractive grading on $Q(M,B)_0$, which determines the cocharacter $\chi: \C^* \to T$ as in Theorem~\ref{trm:genericCocharDeltan}. But the vertices and unoriented edges only depend on the $T$-action, which is independent of the order of the segments in the coefficient quiver. Hence, the unoriented graph for Example~\ref{ex:fund-mutation} coincides with the one given in Section~\ref{subsec:FeiginDegeneration}. 
\end{rem}
\begin{rem}\label{rem:partialOrderDeltan} We can equip the set $SC^{\Delta_n}_\mb{e}(M) $ with a partial order, given by the transitive closure of the relation $H\leq S$ if $H=\mu_{(h,p),(\ell,q)}(S)$. It is then possible to refine this to a total order, which is compatible with Theorem~\ref{trm:t-stable_filtration-general-setting}. This is used to determine the order of the fixed points in Example~\ref{ex:fund-mutation} and Example~\ref{ex:cohomology-generators-loop-quiver}.
\end{rem}
\section{Special cases}\label{sec:SpecialCases}
\subsection{Quiver Grassmannians for Equioriented Quivers of Type A}
The constructions, as introduced in Section~\ref{sec:quiver-grass-for-cycle} and Section~\ref{sec:TorusAction}, also apply to equioriented quivers of type $A$. The following result is a special case of Theorem~\ref{trm:t-stable_filtration-equi-cycle}.
\begin{cor}
\label{cor:t-stable_filtration-equi-type-A}
Let $Q$ be an equioriented quiver of type $A$ on $n$ vertices. Let $M$ be a representation of $Q$ with $d_0$ indecomposable direct summands. Take a dimension vector $\mb{e} \leq \bdim M$ such that $\mr{Gr}_\mb{e}(M)$ is non-empty. With respect to the action of the torus $T := (\C^{*})^{d_0+1}$, this quiver Grassmannian is a projective BB-filterable GKM-variety.
\end{cor}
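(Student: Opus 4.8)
The plan is to deduce the statement from Theorem~\ref{trm:t-stable_filtration-equi-cycle} by realising $M$ as a nilpotent representation of $\Delta_n$, as anticipated in the text preceding the corollary. Label the vertices of $Q$ by $1,\ldots,n$ and its arrows by $i\to i+1$ for $i\in[n-1]$. Let $\tilde M$ be the representation of $\Delta_n$ obtained from $M$ by retaining all the spaces $M_i$ and all the maps $M_a$ (for $a\in Q_1$), and declaring the map attached to the additional arrow $n\to 1$ of $\Delta_n$ to be zero. First I would observe that $\tilde M$ is nilpotent: every composition of structure maps running once around the cycle factors through the zero map $\tilde M_{n\to 1}$, so $\tilde M_{a+n-1}\circ\cdots\circ\tilde M_a=0$ for all $a\in\Z/n\Z$.

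Next I would check that $\tilde M$ has exactly $d_0$ indecomposable summands. By Theorem~\ref{trm:KirillovKSThm} the indecomposable summands of $M$ are the interval representations of $Q$ supported on consecutive vertices $\{i,i+1,\ldots,j\}$ with identity-type maps; since the operation ``extend the extra arrow by zero'' is additive and sends each such interval to the nilpotent indecomposable $U(j;\,j-i+1)$ of Example~\ref{ex:NilpoIndcpDeltan} (no structure map of the interval is the arrow $n\to 1$, as its support does not wrap around), the Krull--Schmidt decomposition is preserved and $\tilde M$ has $d_0$ summands.

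The key identification is that the two quiver Grassmannians coincide. A subrepresentation of $M$ is a graded subspace $U=\bigoplus_i U_i$ of $V=\bigoplus_i M_i$ satisfying $M_a(U_i)\subseteq U_{i+1}$ for $i\in[n-1]$, while a subrepresentation of $\tilde M$ must in addition satisfy $\tilde M_{n\to 1}(U_n)\subseteq U_1$; but this last condition is vacuous since $\tilde M_{n\to 1}=0$. Hence $\mr{Gr}_{\mb{e}}(M)=\mr{Gr}_{\mb{e}}(\tilde M)$ as closed subvarieties of the same classical Grassmannian of $V$ (cf. Remark~\ref{rem:QuiverGrIndptBasis}). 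Moreover, the coefficient quivers agree as labelled graphs, because the zero arrow $n\to 1$ contributes no edge (Definition~\ref{def:CoeffQuiver}); consequently the enumeration of the $d_0$ segments, and hence the resulting $T=(\C^*)^{d_0+1}$-action of \S\ref{sec:act-bigger-T}, are literally the same on both sides.

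With these identifications, $(\mr{Gr}_{\mb{e}}(M),T)=(\mr{Gr}_{\mb{e}}(\tilde M),T)$, and Theorem~\ref{trm:t-stable_filtration-equi-cycle} applied to the nilpotent $\Delta_n$-representation $\tilde M$ (with $d_0$ summands) shows that the right-hand side is a projective BB-filterable GKM-variety, which is the claim. No essential obstacle arises; the only point requiring care is the bookkeeping of the previous paragraph, namely verifying that the subrepresentations, the variety structure, and the torus action all transport unchanged under the extension. This is immediate once one notes that all of these are induced from the ambient Grassmannian of $V$ and that the extra cyclic arrow imposes only a trivial incidence relation.
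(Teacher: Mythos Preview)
Your proof is correct and follows exactly the same approach as the paper, which simply states that all indecomposable representations of $Q$ are indecomposable nilpotent representations of $\Delta_n$. You have spelled out in detail the identifications (of subrepresentations, variety structure, and torus action) that the paper leaves implicit in this one-line observation.
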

\begin{proof}
All indecomposable representations of $Q$ are indecomposable nilpotent representations of $\Delta_{n}$.
\end{proof}
\subsubsection{Recovering the Bruhat graph}
Let $\mathcal{F}l_{n+1}$ denote the variety of complete flags of subspaces in $\C^{n+1}$ as in Example~\ref{ex:DynkinAEquioriented}. This variety can be obtained as $\mr{Gr}_\mb{e}(M)$ for $M=U_{1,n}\otimes \C^{n+1}$ and $\mb{e}=(1,2,\ldots, n)$. The coefficient quiver consists of $n+1$ segments, all of length $n$, starting in $1$. A subquiver $S$ of $Q(M,B)$ is successor closed if and only if $K_i:=S_0\cap B^{(i)}\subset K_{i+1}:=S_0\cap B^{(i+1)}$ for any $i\in [n-1]$. Therefore $SC^{\Delta_n}_\mb{e}(M)$ is in bijection with the set $S_{n+1}$ of permutations of $[n+1]$: if $S\in SC^{\Delta_n}_\mb{e}(M)$, then the corresponding permutation $\sigma_S$ sends $i\in[n+1]$ to the unique element, contained in $K_i\setminus K_{i-1}$, where, by convention, $K_0=\emptyset$ and $K_{n+1}= [n+1]$. 

Let $v_k^{(i)}$ be the starting point of a segment in $S\in SC^{\Delta_n}_\mb{e}(M)$, and assume that $j>k$ is such that $v_j^{(i)}\not\in K_i$, then there is exactly one movable part of the $i$-th segment which can be moved to the segment $j$. We observe that, 
in permutation terms, this is equivalent to left multiplying $\sigma_S$ by $(i,j)$. Since the end point $v_k^{(r)}$ of the movable part lies on the $k$-th segment in position $r$,  and the  vertex $v_j^{(r)}$ lies on the $j$-th segment, also in position $r$, we deduce that the corresponding edge is labelled by the torus character $\epsilon_j-\epsilon_i$. 
Thus, the $0$-th coordinate of any element $(\gamma_0, \gamma_1, \ldots, \gamma_{n+1})\in T=\C^*\times (\C^*)^{n+1}=\C^*\times T'$ acts trivially, and the action of $T'$ on $\mathcal{F}l_{n+1}$ coincides with the action of the maximal torus of diagonal matrices in $GL_{n+1}$ (induced by the natural action of $GL_{n+1}$ on $\C^{n+1}$).

\begin{rem}
The moment graph we have just described is the so-called Bruhat graph. The partial order, obtained as in Remark~\ref{rem:partialOrderDeltan}, is in this case nothing but the (opposite) Bruhat order.
\end{rem}
\subsubsection{Feigin degeneration of $\mathcal{F}l_{n+1}$}\label{subsec:FeiginDegeneration}
Replacing the identity maps of the quiver representation $M$ from the previous subsection by arbitrary linear maps, we obtain the linear degenerations of the flag variety introduced in \cite{CFFFR2017}. It is therefore possible to apply Theorem~\ref{trm:comb-moment-graph} and Theorem~\ref{trm:cohomology-generators-general-setting} to this class of varieties.

In particular, we can recover the moment graph, for the Feigin degeneration of the flag variety (denoted by $\mathcal{F}l_{n+1}^a$), as constructed in \cite[Section~3.2]{CFR2013}. This is a special degeneration where we replace the identity map along the $i$-th arrow by the projection $\mr{pr}_{i+1}$. 

For $n=4$, the coefficient quiver of this representation is displayed on the left below and after reordering the segments we arrive at the right picture. 
\begin{center}
\begin{tikzpicture}[scale=.34]

	\draw[fill=black] (0,2) circle(.2);	\draw[arrows={-angle 90}, shorten >=2.8]  (0,2) -- (1.5,2);
	\draw[fill=black] (1.5,2) circle(.2);	\draw[arrows={-angle 90}, shorten >=2.8] (1.5,2) -- (3,2);
	\draw[fill=black] (3,2) circle(.2);		\draw[arrows={-angle 90}, shorten >=2.8] (3,2) -- (4.5,2);
	\draw[fill=black] (4.5,2) circle(.2);

	\draw[fill=black] (0,1) circle(.2);	
	\draw[fill=black] (1.5,1) circle(.2);	\draw[arrows={-angle 90}, shorten >=2.8] (1.5,1) -- (3,1);
	\draw[fill=black] (3,1) circle(.2);		\draw[arrows={-angle 90}, shorten >=2.8] (3,1) -- (4.5,1);
	\draw[fill=black] (4.5,1) circle(.2);

	\draw[fill=black] (0,0) circle(.2);	\draw[arrows={-angle 90}, shorten >=2.8]  (0,0) -- (1.5,0);
	\draw[fill=black] (1.5,0) circle(.2);	
	\draw[fill=black] (3,0) circle(.2);		\draw[arrows={-angle 90}, shorten >=2.8] (3,0) -- (4.5,0);
	\draw[fill=black] (4.5,0) circle(.2);
	
	\draw[fill=black] (0,-1) circle(.2);	\draw[arrows={-angle 90}, shorten >=2.8]  (0,-1) -- (1.5,-1);
	\draw[fill=black] (1.5,-1) circle(.2);	\draw[arrows={-angle 90}, shorten >=2.8] (1.5,-1) -- (3,-1);
	\draw[fill=black] (3,-1) circle(.2);		
	\draw[fill=black] (4.5,-1) circle(.2);
	
	\draw[fill=black] (0,-2) circle(.2);	\draw[arrows={-angle 90}, shorten >=2.8]  (0,-2) -- (1.5,-2);
	\draw[fill=black] (1.5,-2) circle(.2);	\draw[arrows={-angle 90}, shorten >=2.8] (1.5,-2) -- (3,-2);
	\draw[fill=black] (3,-2) circle(.2);		\draw[arrows={-angle 90}, shorten >=2.8] (3,-2) -- (4.5,-2);
	\draw[fill=black] (4.5,-2) circle(.2);
	
\end{tikzpicture}$\qquad \qquad \qquad \qquad \qquad$
\begin{tikzpicture}[scale=.34]

    \draw[arrows={-angle 90}, shorten >=2.8] (3,2) -- (4.5,2);
    \draw[arrows={-angle 90}, shorten >=2.8] (1.5,1) -- (3,1);
	\draw[arrows={-angle 90}, shorten >=2.8] (3,1) -- (4.5,1);
	\draw[arrows={-angle 90}, shorten >=2.8]  (0,0) -- (1.5,0);
    \draw[arrows={-angle 90}, shorten >=2.8] (1.5,0) -- (3,0);
	\draw[arrows={-angle 90}, shorten >=2.8] (3,0) -- (4.5,0);
	\draw[arrows={-angle 90}, shorten >=2.8]  (0,-1) -- (1.5,-1);
    \draw[arrows={-angle 90}, shorten >=2.8] (1.5,-1) -- (3,-1);
	\draw[arrows={-angle 90}, shorten >=2.8] (3,-1) -- (4.5,-1);
	\draw[arrows={-angle 90}, shorten >=2.8]  (0,-2) -- (1.5,-2);
    \draw[arrows={-angle 90}, shorten >=2.8] (1.5,-2) -- (3,-2);   
	\draw[arrows={-angle 90}, shorten >=2.8]  (0,-3) -- (1.5,-3);    
	
	\draw[fill=black] (4.5,3) circle(.2);
	
	\draw[fill=black] (3,2) circle(.2);
	\draw[fill=black] (4.5,2) circle(.2);

	\draw[fill=black] (1.5,1) circle(.2);
	\draw[fill=black] (3,1) circle(.2);
	\draw[fill=black] (4.5,1) circle(.2);

	\draw[fill=black] (0,0) circle(.2);
	\draw[fill=black] (1.5,0) circle(.2);
	\draw[fill=black] (3,0) circle(.2);
	\draw[fill=black] (4.5,0) circle(.2);
	
	\draw[fill=black] (0,-1) circle(.2);
	\draw[fill=black] (1.5,-1) circle(.2);
	\draw[fill=black] (3,-1) circle(.2);
	\draw[fill=black] (4.5,-1) circle(.2);
	
	\draw[fill=black] (0,-2) circle(.2);
	\draw[fill=black] (1.5,-2) circle(.2);
	\draw[fill=black] (3,-2) circle(.2);
	
	\draw[fill=black] (0,-3) circle(.2);
	\draw[fill=black] (1.5,-3) circle(.2);
	
	\draw[fill=black] (0,-4) circle(.2);
	
\end{tikzpicture}
\end{center}
The basis corresponding to the right picture satisfies the assumptions of an aligned coefficient quiver and is the same as described in \cite[Remark~3.14]{CFR2013}. We obtain an attractive grading if we use the row indices as weights for the corresponding basis vectors. Hence we can apply the results from Section~\ref{sec:GKM-VarietyStructure} in this setting.

By \cite[Equation~2.1, Remark~3.14]{CFR2013}, the admissible collections $\bf{S}$, which parametrise the vertices of the moment graph, are a special case of the tuple of index sets $(K_i)_{i\in [n]}$ as in Lemma~\ref{lem:fixedPtsAttractiveGrading}. The one-dimensional orbits, starting at the vertex parametrised by $\bf{S}$, are parametrised by $\bf{S}$-effective pairs \cite[Definition~3.5]{CFR2013}. Their geometric interpretation in \cite[Remark~3.6]{CFR2013}  coincides with the description of fundamental mutations of the coefficient quiver, corresponding to the admissible collection $\bf{S}$. Hence, the indices of the $\bf{S}$-effective pairs and the leading indices of the fundamental mutations coincide, if we use the same order of the indecomposable summands of the quiver representation.

The structure of the unoriented moment graph is independent of the chosen basis, as long as the basis yields an aligned coefficient quiver. But to recover the edge labels as in \cite[Theorem~3.18]{CFR2013}, we have to use the basis described above instead of the one from Example~\ref{ex:fund-mutation}. Nevertheless, the edge labels are slightly different, since our torus has one additional parameter, which is required to obtain finitely many one-dimensional orbits for general quiver Grassmannians for the cyclic quiver. 

For the equioriented quiver of type $A$, the parameter $\gamma_0$ is not necessary to obtain the  structure of a GKM-variety. Instead we can work with the subtorus $T'$ as defined in \S\ref{sec:act-bigger-T}, where we set $\gamma_0$ equal to one. 
In the following example, we exhibit the different edge labels for $n=3$.

\begin{ex}
The $T$- (and $T'$-)fixed points in $\mathcal{F}l_{3}^a$ are given by
\begin{center}
\begin{tikzpicture}[scale=0.35]
\node at (-1.7,2.5) {$p_1 = $};
\draw[fill=white] (1.5,4) circle (.12);
\draw[fill=white] (0,3) circle (.12);
\draw[arrows={-angle 90}, shorten >=2, shorten <=2]  (0,3) -- (1.5,3);
\draw[fill=black] (1.5,3) circle (.12);
\draw[fill=white] (0,2) circle (.12);
\draw[arrows={-angle 90}, shorten >=2, shorten <=2]  (0,2) -- (1.5,2);
\draw[fill=black] (1.5,2) circle (.12);
\draw[fill=black] (0,1) circle (.12);
\end{tikzpicture}
\begin{tikzpicture}[scale=0.35]
\node at (-1.7,2.5) {$p_2 = $};
\draw[fill=black] (1.5,4) circle (.12);
\draw[fill=white] (0,3) circle (.12);
\draw[arrows={-angle 90}, shorten >=2, shorten <=2]  (0,3) -- (1.5,3);
\draw[fill=white] (1.5,3) circle (.12);
\draw[fill=white] (0,2) circle (.12);
\draw[arrows={-angle 90}, shorten >=2, shorten <=2]  (0,2) -- (1.5,2);
\draw[fill=black] (1.5,2) circle (.12);
\draw[fill=black] (0,1) circle (.12);
\end{tikzpicture}
\begin{tikzpicture}[scale=0.35]
\node at (-1.7,2.5) {$p_3 = $};
\draw[fill=white] (1.5,4) circle (.12);
\draw[fill=white] (0,3) circle (.12);
\draw[arrows={-angle 90}, shorten >=2, shorten <=2]  (0,3) -- (1.5,3);
\draw[fill=black] (1.5,3) circle (.12);
\draw[fill=black] (0,2) circle (.12);
\draw[arrows={-angle 90}, shorten >=2, shorten <=2]  (0,2) -- (1.5,2);
\draw[fill=black] (1.5,2) circle (.12);
\draw[fill=white] (0,1) circle (.12);
\end{tikzpicture}
\begin{tikzpicture}[scale=0.35]
\node at (-1.7,2.5) {$p_4 = $};
\draw[fill=black] (1.5,4) circle (.12);
\draw[fill=white] (0,3) circle (.12);
\draw[arrows={-angle 90}, shorten >=2, shorten <=2]  (0,3) -- (1.5,3);
\draw[fill=black] (1.5,3) circle (.12);
\draw[fill=white] (0,2) circle (.12);
\draw[arrows={-angle 90}, shorten >=2, shorten <=2]  (0,2) -- (1.5,2);
\draw[fill=white] (1.5,2) circle (.12);
\draw[fill=black] (0,1) circle (.12);
\end{tikzpicture}
\begin{tikzpicture}[scale=0.35]
\node at (-1.7,2.5) {$p_5 = $};
\draw[fill=white] (1.5,4) circle (.12);
\draw[fill=black] (0,3) circle (.12);
\draw[arrows={-angle 90}, shorten >=2, shorten <=2]  (0,3) -- (1.5,3);
\draw[fill=black] (1.5,3) circle (.12);
\draw[fill=white] (0,2) circle (.12);
\draw[arrows={-angle 90}, shorten >=2, shorten <=2]  (0,2) -- (1.5,2);
\draw[fill=black] (1.5,2) circle (.12);
\draw[fill=white] (0,1) circle (.12);
\end{tikzpicture}
\begin{tikzpicture}[scale=0.35]
\node at (-1.7,2.5) {$p_6 = $};
\draw[fill=black] (1.5,4) circle (.12);
\draw[fill=white] (0,3) circle (.12);
\draw[arrows={-angle 90}, shorten >=2, shorten <=2]  (0,3) -- (1.5,3);
\draw[fill=white] (1.5,3) circle (.12);
\draw[fill=black] (0,2) circle (.12);
\draw[arrows={-angle 90}, shorten >=2, shorten <=2]  (0,2) -- (1.5,2);
\draw[fill=black] (1.5,2) circle (.12);
\draw[fill=white] (0,1) circle (.12);
\end{tikzpicture}
\begin{tikzpicture}[scale=0.35]
\node at (-1.7,2.5) {$p_7 = $};
\draw[fill=black] (1.5,4) circle (.12);
\draw[fill=black] (0,3) circle (.12);
\draw[arrows={-angle 90}, shorten >=2, shorten <=2]  (0,3) -- (1.5,3);
\draw[fill=black] (1.5,3) circle (.12);
\draw[fill=white] (0,2) circle (.12);
\draw[arrows={-angle 90}, shorten >=2, shorten <=2]  (0,2) -- (1.5,2);
\draw[fill=white] (1.5,2) circle (.12);
\draw[fill=white] (0,1) circle (.12);
\end{tikzpicture}
\end{center}
On the left below we have the moment graph for the $T$-action on $\mathcal{F}l_{3}^a$ and on the right hand side we have the moment graph for the $T'$-action, which coincides with the results in \cite[Example~3.17, Theorem~3.18]{CFR2013}.
\begin{center}
\begin{tikzpicture}[scale=1.3]
 \def\centerarc[#1](#2)(#3:#4:#5);%
    {
    \draw[#1]([shift=(#3:#5)]#2) arc (#3:#4:#5);
    }

\node at (0,3) {$p_7$};
\draw[arrows={-angle 90}, shorten >=6, shorten <=6]  (0,3) -- (-2,2); \node at (-1.3,2.7) {\tiny $\epsilon_3-\epsilon_1+\delta$};
\draw[arrows={-angle 90}, shorten >=6, shorten <=6]  (0,3) -- (0,2);  \node at (1.2,2.7) {\tiny $\epsilon_4-\epsilon_2$};
\draw[arrows={-angle 90}, shorten >=6, shorten <=6]  (0,3) -- (2,2);  \node at (0.45,2.3) {\tiny $\epsilon_3-\epsilon_2$};
\node at (0,2) {$p_6$};
\draw[arrows={-angle 90}, shorten >=6, shorten <=6]  (0,2) -- (1,1); \node at (0.2,1.4) {\tiny $\epsilon_4-\epsilon_3$};
\draw[arrows={-angle 90}, shorten >=6, shorten <=6]  (0,2) -- (-1,1); \node at (-0.2,1.2) {\tiny $\epsilon_2-\epsilon_1+\delta$};
\node at (-2,2) {$p_5$};
\draw[arrows={-angle 90}, shorten >=6, shorten <=6]  (-2,2) -- (-1,1); \node at (-1.3,1.7) {\tiny $\epsilon_3-\epsilon_2$};
\centerarc[arrows={-angle 90}](0,2)(185:265:2cm); \node at (-1.6,0.3) {\tiny $\epsilon_4-\epsilon_2$};
\node at (2,2) {$p_4$};
\draw[arrows={-angle 90}, shorten >=6, shorten <=6]  (2,2) -- (1,1); \node at (1.3,1.7) {\tiny $\epsilon_3-\epsilon_2$};
\centerarc[arrows={-angle 90}](0,2)(-5:-85:2cm); \node at (1.7,0.3) {\tiny $\epsilon_3-\epsilon_1+\delta$};
\node at (-1,1) {$p_3$};
\draw[arrows={-angle 90}, shorten >=6, shorten <=6]  (-1,1) -- (0,0); \node at (-0.2,0.6) {\tiny $\epsilon_4-\epsilon_3$};
\node at (1,1) {$p_2$};
\draw[arrows={-angle 90}, shorten >=6, shorten <=6]  (1,1) -- (0,0); \node at (0.2,0.8) {\tiny $\epsilon_2-\epsilon_1+\delta$};
\node at (0,0) {$p_1$};
\end{tikzpicture}$\quad \quad$
\begin{tikzpicture}[scale=1.3]
 \def\centerarc[#1](#2)(#3:#4:#5);%
    {
    \draw[#1]([shift=(#3:#5)]#2) arc (#3:#4:#5);
    }

\node at (0,3) {$p_7$};
\draw[arrows={-angle 90}, shorten >=6, shorten <=6]  (0,3) -- (-2,2); \node at (-1.2,2.7) {\tiny $\epsilon_3-\epsilon_1$};
\draw[arrows={-angle 90}, shorten >=6, shorten <=6]  (0,3) -- (0,2);  \node at (1.2,2.7) {\tiny $\epsilon_4-\epsilon_2$};
\draw[arrows={-angle 90}, shorten >=6, shorten <=6]  (0,3) -- (2,2);  \node at (0.45,2.3) {\tiny $\epsilon_3-\epsilon_2$};
\node at (0,2) {$p_6$};
\draw[arrows={-angle 90}, shorten >=6, shorten <=6]  (0,2) -- (1,1); \node at (0.4,1.2) {\tiny $\epsilon_4-\epsilon_3$};
\draw[arrows={-angle 90}, shorten >=6, shorten <=6]  (0,2) -- (-1,1); \node at (-0.2,1.4) {\tiny $\epsilon_2-\epsilon_1$};
\node at (-2,2) {$p_5$};
\draw[arrows={-angle 90}, shorten >=6, shorten <=6]  (-2,2) -- (-1,1); \node at (-1.3,1.7) {\tiny $\epsilon_3-\epsilon_2$};
\centerarc[arrows={-angle 90}](0,2)(185:265:2cm); \node at (-1.6,0.3) {\tiny $\epsilon_4-\epsilon_2$};
\node at (2,2) {$p_4$};
\draw[arrows={-angle 90}, shorten >=6, shorten <=6]  (2,2) -- (1,1); \node at (1.3,1.7) {\tiny $\epsilon_3-\epsilon_2$};
\centerarc[arrows={-angle 90}](0,2)(-5:-85:2cm); \node at (1.6,0.3) {\tiny $\epsilon_3-\epsilon_1$};
\node at (-1,1) {$p_3$};
\draw[arrows={-angle 90}, shorten >=6, shorten <=6]  (-1,1) -- (0,0); \node at (-0.4,0.8) {\tiny $\epsilon_4-\epsilon_3$};
\node at (1,1) {$p_2$};
\draw[arrows={-angle 90}, shorten >=6, shorten <=6]  (1,1) -- (0,0); \node at (0.2,0.6) {\tiny $\epsilon_2-\epsilon_1$};
\node at (0,0) {$p_1$};
\end{tikzpicture}
\end{center}
\end{ex}
\subsection{Linear Degenerations of the Affine Grassmannian and the Affine Flag Variety of Type A}
We briefly recall the definition of the affine Grassmannian, its linear degenerations and their finite dimensional approximations (see \cite{FFR2017},\cite[Section~3 \& 6]{Pue2020} for more detail). Let $P \subset \hat{\mathfrak{gl}}_n$ be the maximal parahoric subgroup. The \f{affine Grassmannian} of type $\mathfrak{gl}_n$ is defined as $\mr{Gr}(\hat{\mathfrak{gl}}_n) := \hat{\mathfrak{gl}}_n/ P$. For $\ell \in \Z$, define 
\[ V_\ell := \mr{span}(v_\ell, v_{\ell-1}, v_{\ell-2},\dots) \]
as subspace of the infinite dimensional vectorspace $V$ with basis vectors $v_i$ for all $i \in \Z$. The \textbf{Sato Grassmannian} $\mathrm{SGr}_k$ for $k \in \Z$ is defined as
\[ \mathrm{SGr}_k := \big\{ U \subset V \  :  \ \mr{There} \ \mr{exists} \ \mr{a} \ \ell < k \ \mr{s.t.} \  V_\ell \subset U  
\  \mr{and}  \  \dim U/V_\ell = k-\ell \ \big\}. \]
\begin{prop}(c.f. \cite[Section~1.3]{FFR2017})
\label{prop:alt-param-classical}
The affine Grassmannian $\mr{Gr}\big(\widehat{\mathfrak{gl}}_n\big)$ as subset in the Sato Grassmannian is parametrised as
\[\mr{Gr}\big(\widehat{\mathfrak{gl}}_n\big) \cong \big\{ U  \in  \mathrm{SGr}_0  \ : \  U  \subseteq s_n U \big\}, \]
where $s_n : V \to V$ maps $v_i$ to $v_{i+n}$ for all $i \in \Z$.
\end{prop}

It is possible to define linear degenerations in the same way as for the affine flag variety (see \cite[Definition~6.2]{Pue2020}): For a linear map $f: V \to V$,  the \f{f-linear degenerate affine Grassmannian} is defined as 
\[\mr{Gr}^f\big(\widehat{\mathfrak{gl}}_n\big) := \big\{ U  \in  \mathrm{SGr}_0  \ : \ f U  \subseteq  U \big\}.\] 
The degeneration 
\[\mr{Gr}^a(\hat{\mathfrak{gl}}_n) := \big\{ U  \in  \mathrm{SGr}_0  \ : \ s_{-n}\circ \mr{pr}_1 \circ \mr{pr}_2 \circ \dots \circ \mr{pr}_n U  \subseteq  U \big\}\] 
was already studied in \cite{FFR2017}.
For a positive integer $N \in \N$, the finite approximation of the $f$-linear degenerate affine Grassmannian is defined as 
\[ \mr{Gr}^f_N\big(\widehat{\mathfrak{gl}}_n\big) := \big\{ U  \in  \mr{Gr}^f\big(\widehat{\mathfrak{gl}}_n\big)  \ : \ V_{-N}  \subseteq  U \subseteq V_N \big\}.\]
Utilising the finite dimensional vectorspace 
\[ V_{(N)} := \mr{span}(v_N,v_{N-1},\dots , v_{-N+2},v_{-N+1}) \]
we can identify each finite dimensional approximation with a quiver Grassmannian (cf. \cite[Theorem~6.3]{Pue2020}): 
\[ \mr{Gr}^f_N\big(\widehat{\mathfrak{gl}}_n\big) \cong \mr{Gr}_N(M_N^f)\]
where $M_N^f:=(V_{(N)},f\vert_{V_{(N)}})$. A linear degeneration is called nilpotent if $f\vert_{V_{(N)}}$ is nilpotent for every $N \in \N$. In this case $M_N^f$ is a nilpotent representation of the loop quiver (cf. \cite[Remark~6.5]{Pue2020}).
The isomorphism classes of nilpotent degenerations are parametrised by the corank of $f$ \cite[Section~6.4]{Pue2020}. We call $\mr{Gr}^f_N\big(\hat{\mathfrak{gl}}_n\big)$ a partial degeneration if the corank of $f$ is between the corank of $s_{-n}$ and the corank of $s_{-n} \circ \mr{pr}_1 \circ \dots \circ \mr{pr}_n$. The corresponding isomorphism classes of nilpotent linear degenerations between $\mr{Gr}(\hat{\mathfrak{gl}}_n)$ and  $\mr{Gr}^a(\hat{\mathfrak{gl}}_n)$, are labeled by the integers $k \in \{ 0,1,\dots,n\}$, where zero corresponds to the affine Grassmannian and $n$ to $\mr{Gr}^a(\hat{\mathfrak{gl}}_n)$. Each isomorphism class has the representative
\[\mr{Gr}^k(\hat{\mathfrak{gl}}_n) := \big\{ U  \in  \mathrm{SGr}_0  \ : \ s_{-n}\circ \mr{pr}_1 \circ \mr{pr}_2 \circ \dots \circ \mr{pr}_k U  \subseteq  U \big\}.\] 
\begin{prop}\label{prop:approx-lin-deg-aff-Grass}
For $k \in \{ 0,1,\dots,n\}$, the finite approximations of nilpotent partial degenerations can be realised as quiver Grassmannians in the following way
\[ \mr{Gr}_N^k\big(\widehat{\mathfrak{gl}}_{n}\big) \cong \mr{Gr}_{nN}\big( A_{2N} \otimes \C^{n-k} \oplus A_{N} \otimes \C^{2k}\big), \]
where $A_N \cong \C[t]/(t^N)$. 
\end{prop}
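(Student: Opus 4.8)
The plan is to unwind the definition of the finite approximation $\mr{Gr}_N^k(\widehat{\mathfrak{gl}}_n)$ from \cite{Pue2020} in lattice-theoretic terms and to identify it, through a fixed linear isomorphism of the ambient space, with the quiver Grassmannian on the right-hand side. Here the relevant quiver is the loop quiver, i.e. the equioriented cycle $\Delta_1$ on a single vertex, so that a subrepresentation of a nilpotent representation $(W,t)$ is simply a $t$-stable subspace and $A_\ell=\C[t]/(t^\ell)$ is the indecomposable nilpotent representation of length $\ell$; this is why the dimension vector on the right is the single integer $nN$. As a first consistency check, the right-hand representation has total dimension $2N(n-k)+2kN=2nN$, and we select subrepresentations of dimension $nN$, that is exactly the middle.

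First I would treat the honest affine Grassmannian, the case $k=0$. Fix the standard lattice $\Lambda_0=\C[[t]]^n\subset\C((t))^n$ and realise the $N$-th approximation as the set of $\C[[t]]$-lattices $L$ with $t^N\Lambda_0\subseteq L\subseteq t^{-N}\Lambda_0$ and $\dim(L/t^N\Lambda_0)=nN$. Passing to the quotient $W:=t^{-N}\Lambda_0/t^N\Lambda_0$, multiplication by $t$ becomes a nilpotent endomorphism, and since $t^{-N}\C[[t]]/t^N\C[[t]]\cong\C[t]/(t^{2N})=A_{2N}$ as $\C[t]$-modules we get $(W,t)\cong A_{2N}\otimes\C^n$. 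The assignment $L\mapsto L/t^N\Lambda_0$ is a bijection between the lattices above and the $nN$-dimensional $t$-stable subspaces of $W$; since both sides are cut out inside the same classical Grassmannian $\mr{Gr}(nN,W)$ by the identical closed $t$-stability conditions, this bijection is an isomorphism of projective varieties. Thus $\mr{Gr}_N^0(\widehat{\mathfrak{gl}}_n)\cong\mr{Gr}_{nN}(A_{2N}\otimes\C^n)$, the claim for $k=0$; the independence of the quiver Grassmannian of the chosen basis (Remark~\ref{rem:QuiverGrIndptBasis}) makes this identification canonical.

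Next I would incorporate the linear degeneration. By \cite{Pue2020} the $k$-th degeneration is obtained by degenerating the structure map on $k$ of the $n$ coordinate strands of $\C((t))^n$, exactly as in the affine flag case. On the window quotient $W$ the effect is to split the length-$2N$ Jordan string of each degenerated strand into two length-$N$ strings, so that each such strand contributes $A_N\otimes\C^2$ in place of $A_{2N}$. Performing this on $k$ strands and leaving the remaining $n-k$ untouched turns the window module into $A_{2N}\otimes\C^{n-k}\oplus A_N\otimes\C^{2k}$, while neither the ambient $W$ (of dimension $2nN$), nor the subspace dimension $nN$, nor the variety structure inherited from $\mr{Gr}(nN,W)$ changes. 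The identification of points with $t'$-stable subspaces for the degenerate operator $t'$, and the resulting isomorphism of varieties, then go through verbatim as for $k=0$, yielding the stated isomorphism.

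The step I expect to be the main obstacle is the bookkeeping of the previous paragraph: one must check that the degeneration of \cite[Definition~5.2]{Pue2020} produces \emph{exactly} the split-Jordan-block module $A_{2N}\otimes\C^{n-k}\oplus A_N\otimes\C^{2k}$, and not merely some degeneration of $A_{2N}\otimes\C^n$ of the same total dimension. Concretely this amounts to computing the Jordan type of the degenerate nilpotent operator $t'$ on $W$, equivalently the isomorphism type of the associated loop-quiver representation, and to verifying that after degeneration the lattice conditions still coincide with $t'$-stability, so that the two varieties remain the same locus in $\mr{Gr}(nN,W)$. Once this isomorphism type is pinned down, the remainder is the formal identification already used for $k=0$, and the whole statement parallels the affine flag case treated in \cite{Pue2020} and the fully degenerate case of \cite{FFR2017}.
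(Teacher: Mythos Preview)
Your proposal is essentially correct and aligned with the paper, though far more explicit: the paper's entire proof is the single sentence ``This is a special case of the construction in \cite[Theorem~2.3, Lemma~3.14]{Pue2020}'', so you are in effect unpacking what those cited results say. Your lattice-theoretic identification for $k=0$ and the Jordan-type analysis of the degenerate operator for general $k$ are exactly the content behind that citation, and the obstacle you flag (pinning down the isomorphism type of the window module after degeneration) is precisely what \cite[Lemma~3.14]{Pue2020} handles in the affine flag setting; the Grassmannian case is the specialisation to a single vertex, as you note.
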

This is a special case of the construction in \cite[Theorem~3.7, Lemma~4.14]{Pue2020}.
\begin{lma}
\label{lma:approx-are-GKM}
The quiver Grassmannians providing finite approximations for nilpotent linear degenerations of affine Grassmannians and affine flag varieties are BB-filterable GKM-varieties.
\end{lma}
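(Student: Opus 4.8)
The plan is to reduce the statement to a direct application of Theorem~\ref{trm:t-stable_filtration-equi-cycle}. By Proposition~\ref{prop:approx-lin-deg-aff-Grass}, each of the relevant approximations is isomorphic to a quiver Grassmannian of the form $\mr{Gr}_{\mb{e}}(M)$ with
\[
M = A_{2N}\otimes\C^{n-k}\oplus A_N\otimes\C^{2k}, \qquad A_m\cong\C[t]/(t^m),
\]
so it suffices to verify that $M$ is a nilpotent representation of $\Delta_n$ and that $\mb{e}$ is an admissible dimension vector with $\mr{Gr}_{\mb{e}}(M)$ non-empty. Once this is in place, the GKM and BB-filterability properties come for free.

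First I would identify each building block $A_m\cong\C[t]/(t^m)$ with an indecomposable nilpotent $\Delta_n$-representation. Viewing $\C[t]/(t^m)$ as a $\Z/n\Z$-graded vector space, graded by the degree of $t$ modulo $n$, with $\Delta_n$ acting through multiplication by $t$, the associated operator $A$ satisfies $A^m=0$; hence by Remark~\ref{rem:nilpoRepDeltanGradedVecSpace} the representation is nilpotent. Since multiplication by $t$ sends $t^{j}$ to $t^{j+1}$ and kills $t^{m-1}$, this representation is precisely the indecomposable $U(i_0;m)$ of Example~\ref{ex:NilpoIndcpDeltan}, with terminal vertex $i_0\equiv m-1\bmod n$. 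Consequently $M$ is a finite direct sum of such $U(i_0;m)$, hence a nilpotent representation of $\Delta_n$ admitting a decomposition as in \eqref{eqn:decompNilpo}, with $d_0=(n-k)+2k=n+k$ indecomposable summands.

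Finally, I would check the remaining hypotheses of Theorem~\ref{trm:t-stable_filtration-equi-cycle}. The dimension vector $\mb{e}$ determined by Proposition~\ref{prop:approx-lin-deg-aff-Grass} is the balanced vector with $e_i=N$ at every vertex (total dimension $nN$, half of the total dimension $2nN$ of $M$), so that $\mb{e}\leq\bdim M$; and $\mr{Gr}_{\mb{e}}(M)$ is non-empty because it is isomorphic to the (non-empty, projective) approximation space. Taking $T:=(\C^*)^{n+k+1}$ acting as in \S\ref{sec:act-bigger-T}, Theorem~\ref{trm:t-stable_filtration-equi-cycle} then yields that $(\mr{Gr}_{\mb{e}}(M),T)$ is a projective BB-filterable GKM-variety, which is the claim.

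The only genuine content lies in the translation carried out in the first two paragraphs, namely the recognition that the affine-type approximation spaces belong to the class of quiver Grassmannians for nilpotent representations of the cycle; this is exactly what Proposition~\ref{prop:approx-lin-deg-aff-Grass} supplies, combined with the elementary observation that each $\C[t]/(t^m)$ is nilpotent over $\Delta_n$. After this identification no further obstacle remains, since BB-filterability and the GKM property have already been established for all such quiver Grassmannians in full generality.
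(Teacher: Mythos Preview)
Your approach is essentially the same as the paper's: verify that the approximations are quiver Grassmannians for nilpotent $\Delta_n$-representations and invoke Theorem~\ref{trm:t-stable_filtration-equi-cycle}. The paper's proof is a two-line version of what you wrote, citing Proposition~\ref{prop:approx-lin-deg-aff-Grass} and \cite[Lemma~3.14]{Pue2020} for the required quiver-Grassmannian realisations.

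One small omission: the lemma covers both affine Grassmannians \emph{and} affine flag varieties, but you only invoke Proposition~\ref{prop:approx-lin-deg-aff-Grass}, which handles the Grassmannian case. The flag variety approximations are realised as quiver Grassmannians for nilpotent $\Delta_n$-representations via \cite[Lemma~3.14]{Pue2020}, and you should cite this as well; the subsequent argument (nilpotency of the summands, applicability of Theorem~\ref{trm:t-stable_filtration-equi-cycle}) goes through unchanged.
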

\begin{proof}
The quiver representations as in Proposition~\ref{prop:approx-lin-deg-aff-Grass} and \cite[Lemma~4.14]{Pue2020}, which are used to define the approximations, satisfy the assumptions of Theorem~\ref{trm:t-stable_filtration-equi-cycle}.
\end{proof} 
These quiver Grassmannians are in general not normal, which implies that it is not possible to apply \cite[Theorem~6.9]{Gonzales2014} by Gonzales, in order to compute the $S$-module basis of the $T$-equivariant cohomology. Nevertheless by Theorem~\ref{trm:cohomology-generators-general-setting}, Gonzales' recipe also works in our setting.
\begin{ex}
\label{ex:cohomology-generators-loop-quiver}
For $n=2$ there are three isomorphism classes of linear degenerations of the affine Grassmannian. We want to consider the representative $\mr{Gr}^k\big(\widehat{\mathfrak{gl}}_{n}\big)$ for $k=1$. This corresponds to the intermediate degeneration between the (non-degenerate) affine Grassmannian and its degeneration $\mr{Gr}^a(\hat{\mathfrak{gl}}_2)$. 

For $N=1$ its approximation is isomorphic to the quiver Grassmannian \(\mr{Gr}_2( M) \) for the representation $M = A_{2}  \oplus A_{1} \oplus A_{1} $.
By Lemma~\ref{lma:approx-are-GKM}, we know that this quiver Grassmannian admits a $T$-action such that it becomes a BB-filterable GKM-variety. We can also apply Theorem~\ref{trm:comb-moment-graph} to compute its moment graph. There are four torus fixed points, corresponding to the following successor closed subquivers:
\begin{center}
\begin{tikzpicture}[scale=0.4]
\node at (-1.1,1.5) {$p_1 = $};
\draw[fill=white] (0,3) circle (.12);
\draw[fill=white] (0,2) circle (.12);
\draw[fill=black] (0,1) circle (.12);
\draw[fill=black] (0,0) circle (.12);

 \def\centerarc[#1](#2)(#3:#4:#5);%
    {
    \draw[#1]([shift=(#3:#5)]#2) arc (#3:#4:#5);
    }
\centerarc[arrows={-angle 90}](0,2.5)(76:-76:0.5cm); 
\end{tikzpicture}$\quad \ $
\begin{tikzpicture}[scale=0.4]
\node at (-1.1,1.5) {$p_2 = $};
\draw[fill=white] (0,3) circle (.12);
\draw[fill=black] (0,2) circle (.12);
\draw[fill=white] (0,1) circle (.12);
\draw[fill=black] (0,0) circle (.12);

 \def\centerarc[#1](#2)(#3:#4:#5);%
    {
    \draw[#1]([shift=(#3:#5)]#2) arc (#3:#4:#5);
    }
\centerarc[arrows={-angle 90}](0,2.5)(76:-76:0.5cm); 
\end{tikzpicture}$\quad \ $
\begin{tikzpicture}[scale=0.4]
\node at (-1.1,1.5) {$p_3 = $};
\draw[fill=white] (0,3) circle (.12);
\draw[fill=black] (0,2) circle (.12);
\draw[fill=black] (0,1) circle (.12);
\draw[fill=white] (0,0) circle (.12);

 \def\centerarc[#1](#2)(#3:#4:#5);%
    {
    \draw[#1]([shift=(#3:#5)]#2) arc (#3:#4:#5);
    }
\centerarc[arrows={-angle 90}](0,2.5)(76:-76:0.5cm); 
\end{tikzpicture}$\quad \ $
\begin{tikzpicture}[scale=0.4]
\node at (-1.1,1.5) {$p_4 = $};
\draw[fill=black] (0,3) circle (.12);
\draw[fill=black] (0,2) circle (.12);
\draw[fill=white] (0,1) circle (.12);
\draw[fill=white] (0,0) circle (.12);

 \def\centerarc[#1](#2)(#3:#4:#5);%
    {
    \draw[#1]([shift=(#3:#5)]#2) arc (#3:#4:#5);
    }
\centerarc[arrows={-angle 90}](0,2.5)(76:-76:0.5cm); 
\end{tikzpicture}
\end{center}
If we number the segments in the coefficient quiver from top to bottom, we obtain the following moment graph:
\begin{center}
\begin{tikzpicture}[scale=0.6]

\node at (-2,0) {$p_4$};
\node at (2,0) {$p_3$};
\node at (0,-2) {$p_2$};
\node at (0,-4) {$p_1$};

\node at (0,0.4) {\tiny $\epsilon_2-\epsilon_1$}; 
\node at (-1.9,-1) {\tiny $\epsilon_3-\epsilon_1$}; 
\node at (0.3,-0.7) {\tiny $\epsilon_3-\epsilon_2$}; 
\node at (2.9,-2) {\tiny $\epsilon_3-\epsilon_1-\delta$}; 
\node at (-1.3,-3) {\tiny $\epsilon_2-\epsilon_1-\delta$}; 

\draw[arrows={-angle 90}, shorten >=6, shorten <=6]  (-2,0) -- (2,0); 
\draw[arrows={-angle 90}, shorten >=6, shorten <=6]  (2,0) -- (0,-2); 
\draw[arrows={-angle 90}, shorten >=6, shorten <=6]  (-2,0) -- (0,-2); 
\draw[arrows={-angle 90}, shorten >=6, shorten <=6]  (0,-2) -- (0,-4); 

 \def\centerarc[#1](#2)(#3:#4:#5);%
    {
    \draw[#1]([shift=(#3:#5)]#2) arc (#3:#4:#5);
    }
\centerarc[arrows={-angle 90}](-2.7,0)(-3:-52:4.8cm); 
\end{tikzpicture}
\end{center}
In particular, we can apply Theorem~\ref{trm:cohomology-generators-general-setting}  in order to compute the $S$-module basis for the $T$-equivariant cohomology. The results of this computation are presented in Figure~\ref{figure:ExCohomGenLoop}. For the filtered $T$-stable subvarieties corresponding to the fixed points, we use the same notation as in Theorem~\ref{trm:t-stable_filtration-general-setting}.  Observe that $Z_4$ is not smooth in $p_2$ and $p_3$, We apply Lemma~\ref{lma:euler-class-along-resolution}.(3) to compute the equivariant Euler classes $\mr{Eu}_T(p_2,Z_4)$ and $\mr{Eu}_T(p_3,Z_4)$. These computations are described in Appendix~\ref{app:Desing}. For all other equivariant Euler classes we can apply Lemma~\ref{lma:euler-class-along-resolution}.(1). 
\begin{figure}[!h] 
\begin{align*}
\theta_1&= \begin{pmatrix} 1 &  & 1\\
  & 1 &  \\
  & 1 &  \\ \end{pmatrix}, \quad \theta_2= \begin{pmatrix} 2\epsilon_1-\epsilon_2-\epsilon_3+\delta &  & \epsilon_1-\epsilon_3+\delta\\
  & \epsilon_1-\epsilon_2+\delta &  \\
  & 0 &  \\ \end{pmatrix}\\
\theta_3&= \begin{pmatrix} (\epsilon_3-\epsilon_1)(\epsilon_3-\epsilon_2-\delta) &  & (\epsilon_3-\epsilon_2)(\epsilon_3-\epsilon_1-\delta)\\
  & 0 &  \\
  & 0 &  \\ \end{pmatrix}\\
   \theta_4&= \begin{pmatrix} (\epsilon_3-\epsilon_1)(\epsilon_2-\epsilon_1) &  & 0\\
  & 0 &  \\
  & 0 &  \\ \end{pmatrix} 
\end{align*}
\caption{$S$-module basis of the $T$-equivariant cohomology}\label{figure:ExCohomGenLoop}
\end{figure}

We conclude this example by describing the ring structure of $H_T^\bullet(\mr{Gr}_2(M))$. Under localisation, the addition and multiplication laws, are defined componentwise. It is hence immediate to see that $\theta_1$ is the unit of the ring. It is also easy to check that 
\[
\begin{array}{rcl}
\theta_2^2 & = &(\epsilon_1-\epsilon_2+\delta)\theta_2+ \theta_3+2\theta_4, \\ 
 \theta_2\theta_3=\theta_3\theta_2 & = &(\epsilon_1-\epsilon_3+\delta)\theta_3+(\epsilon_2-\epsilon_3+\delta)\theta_4,  \\ 
 \theta_2\theta_4=\theta_4\theta_2 & = & ( 2\epsilon_1-\epsilon_2-\epsilon_3+\delta)\theta_4, \\ 
 \theta_3^2 & = &(\epsilon_3-\epsilon_2)(\epsilon_3-\epsilon_1-\delta)\theta_3+ (\epsilon_1-\epsilon_2)(\epsilon_3-\epsilon_2-\delta)\theta_4, \\ 
 \theta_3\theta_4=\theta_4\theta_3 & = & (\epsilon_3-\epsilon_1)(\epsilon_3-\epsilon_2-\delta)\theta_4,\\ 
 \theta_4^2 & = &(\epsilon_3-\epsilon_1)(\epsilon_2-\epsilon_1)\theta_4.
\end{array}
\]
This completely determines the ring structure of $H_T^\bullet(\mr{Gr}_2(M))$. We observe that the equivariant cohomology is an $S$-algebra and that $S$ acts diagonally under localisation. By looking at the above multiplication table, we immediately see that there are two possible subsets of our module basis which generate $H_T^\bullet(\mr{Gr}_2(M))$ as an $S$-algebra: $\{\theta_2, \theta_3\}$ and $\{\theta_2, \theta_4\}$. Recall that we have been focusing on cohomology with rational coefficients.  Actually, our basis $\{\theta_1, \ldots, \theta_4\}$ generates the  equivariant cohomology with coefficients in any field. On the other hand, if the field has characteristic 2, only $\{\theta_2, \theta_4\}$ generates $H_T^\bullet(\mr{Gr}_2(M))$ as an algebra.
\end{ex}

\section{Open problems and further research directions}

Constructions and results presented in this paper are a first step towards the application of moment graph techniques to the investigation of quiver Grassmannians, related combinatorics and representation theory. We believe that this is only the tip of the iceberg. We list here some of the questions which remain to be addressed.

\subsection{Explicit formulae for the $\theta$-basis} In the classical setting of flag varieties, it is possible to give an explicit formula for any entry of the GKM-presentation of an equivariant Schubert class. This formula is known as Billey formula because of the article \cite{Billey}, but had already been noticed by Andersen, Jantzen and Soergel \cite[Appendix D]{AJS94}. Explicit, positive Billey formulae for varieties other than flag varieties were asked for by Tymoczko \cite[Question 15. ]{Tymoczko2012}. 

It would be hence interesting to provide such formulae at least for some special class of nilpotent $\Delta_n$-representations.

Recall that the determination of the  $\theta$ basis relies on the computation of  equivariant Euler classes, so that to get an analogue of Billey formula it is first necessary to have an explicit formula for such classes. In the flag variety case this is achieved by Arabia in \cite[\S2.7 Equation (27)]{Arabia98} exploiting Bott-Samelson resolutions. Therefore, a strategy to obtain the desired formulae is to first find an equivariant desingularisation of the quiver Grassmannian of interest, then compute the equivariant Euler classes, and finally use them to determine the formula for the $\theta$-basis, as we do in Example \ref{ex:cohomology-generators-loop-quiver}.

After this paper was written, (equivariant) resolutions for a (very special) class of nilpotent representations for the equioriented cycle have been constructed in \cite{feiginlaninipuetz2021}. Such construction has not been applied to calculate equivariant Euler classes yet. 

\subsection{Extend our construction to a broader class of quiver Grassmannians}
For string representations, $\mathbb{C}^*$-actions on quiver Grassmannians have been studied by in Cerulli Irelli \cite{Cerulli2011}. These are representations such that there exists a basis for which the coefficient quiver of the representation consists only of orientations of Dynkin diagrams of type $\tt A$. He gives a combinatorial description of the fixed points in terms of successor closed subquivers in the coefficient quiver. In the case of nilpotent representations of the equioriented cycle, the same description is valid for the fixed points of the higher rank torus as introduced above (cf. Theorem~\ref{trm:comb-moment-graph}). The results of \cite{Cerulli2011} were generalised to the setting of tree and band representations by Haupt in \cite{Haupt2012}. These are representations with coefficient quivers consisting of oriented trees and bands. 

The $\mathbb{C}^*$-action as introduced in \cite{Cerulli2011} is subject to slightly less restrictive conditions than our assumptions in the definition of attractive gradings (cf. Definition~\ref{defi:attractive-grading}). Hence we belive that it is possible to generalise our results (concerning the cellular decomposition, torus action and the moment graph structure) from the present paper to the setting of string band and tree representations which admit an attractive grading and some sort of aligned coefficient quiver (cf. Definition~\ref{rem:aligned-coeff-quiv}). This combination was important to prove existence of the cellular decomposition. 

Thus the first step towards a generalisation would be to find the class of quiver representations which admit attractive gradings and aligned coefficient quivers. Then one has to check if these assumptions are sufficient to obtain a cellular decomposition of the corresponding quiver Grassmannians. In the next step one has to adapt the construction for the action of the larger torus to this setting and check if it has the desired properties.  

\subsection{Applications to Geometric Representation Theory} Geometric representation theory exploits geometric tools to investigate representations of groups or algebras. If on one hand the geometric realisation of a representation allows one to apply geometric methods, on the other hand it is often not very explicit. In \cite{Tymoczko2008} Tymoczko studies Weyl group representations on cohomology rings of Schubert varieties via GKM-theory, obtaining hence an explicit description of the space and at the same time the desired geometric construction. In the survey paper \cite{Tymoczko2008b} she proposes the challenge to find other spaces whose (equivariant) cohomology rings are endowed with group actions and which can be described via GKM-theory.


After the first version of this paper was written, we managed in \cite{LaniniPuetz2021} to extend Tymoczko's work \cite{Tymoczko2008} and equip, under some technical assumptions, the equivariant cohomology of quiver Grassmannians for nilpotent representations of the equioriented cycle with the action of appropriate products of symmetric groups. These representations were studied and decomposed into irreducible representations in \cite{LaniniPuetz2021}. Moreover, under the same technical assumptions,  the action of a certain Nil Hecke ring on the equivariant cohomology was obtained (see \cite[Theorem 6.8]{LaniniPuetz2021}), but not investigated. While in the classical setting of the flag variety the equivariant cohomology is a cyclic module for the action of the Nil Hecke ring, in the quiver Grassmannian case this does not hold anymore. Thus, it would be interesting to further investigate this Nil Hecke algebra module structure. For instance, it would be fascinating to determine when the equivariant cohomology is a cyclic module for the Nil Hecke algebra.

\subsection{Sheaves on moment graphs for quiver Grassmannians} Given an oriented graph without oriented cycles whose edges are labelled by elements of a $\mathbb{Z}$-module, it is possible to define the corresponding category of sheaves on it (see for example \cite{Fiebig2008}). 

If the graph is the moment graph of a torus action on a complex projective GKM-algebraic variety equipped with a $T$-stable (Whitney) stratification, as in \cite[\S1.1]{BradenMacPherson2001}, then an appropriate class of sheaves on this moment graph (the co--called BMP-sheaves) allows one to compute $T$-equivariant intersection cohomology. In general, the cellularisations of the quiver Grassmannians that we obtain in this article are not stratifications. Nevertheless, it would be interesting to find some classes of quiver Grassmannians of nilpotent representations of the equioriented cycle such that we do get a stratification. For example, in the special case treated in \cite{feiginlaninipuetz2021} 
a stratification is obtained and it is hence possible to apply Braden-MacPherson theory to determine the  the equivariant intersection cohomology. It would be interesting to see whether the Poincar\'e polynomials of the local intersection cohomology groups have interesting combinatorial features, as it happens in the flag variety case, where one gets Kazhdan-Lusztig polynomials. 

The study of categories of sheaves on moment graphs coming form the GKM-variety structure on quiver Grassmannians for nilpotent quiver representations has not yet be initiated, but we expect it to be fruitful. 

If the moment graph is the Bruhat graph of some Coxeter group (or a parabolic analogue), the full subcategory of BMP sheaves produces a (weak) categorification of the Hecke algebra (or a certain parabolic module over it, see \cite{Lanini2014}) of the underlying Coxeter group. By \cite{Fiebig2008}, it is in fact a moment graph realisation of the famous category of Soergel bimodules. This fact led to interesting categorical lifting of properties of Kazhdan-Lusztig polynomials (see, e.g., \cite{Lanini2012, Lanini2015}). 

We believe that the investigation of the category $\mathcal{B}$ of BMP-sheaves at the very least for the quiver Grassmannians appearing in \cite{feiginlaninipuetz2021} is worth to be pursued. In this case, the Grothendieck group of $\mathcal{B}$ has a basis indexed by a combinatorially interesting set, that is the set of Grassmann necklaces, or of juggling patterns.

\subsection{Combinatorics of moment graphs coming from quiver Grassmannians}
A combinatorial study of the moment graphs obtained by our construction might produce interesting algebro-combinatorial results, as well as have geometrical consequences.

If the underlying GKM variety is coming from a (full) flag variety of an algebraic group (acted upon by a maximal torus of such a group), the obtained moment graph is called Bruhat graph and was firstly considered by Dyer in 1991 \cite{Dyer1991}. There is a vast literature on Bruhat graph combinatorics and applications, and we would be surprised if the combinatorics of our moment graphs were not of some interest itself. For example, in the case of the quiver Grassmannians studied in \cite{feiginlaninipuetz2021}, the resulting moment graphs can be described in terms of Grassmann necklace combinatorics (see \cite[Proposition 6.3]{feiginlaninipuetz2021}).

As for the geometric applications of a combinatorial study of the moment graphs, we only mention the possibility of reading off from the graphs the rational smoothness of the variety at a given fixed point (see for example \cite{Brion1999}). In the case of a Bruhat graph, this led to the so--called Carrel--Peterson sufficient and necassary criterion for a KL-polynomial to be equal to 1.
It would be certainly relevant to have a purely combinatorial criterion for a successor closed subquiver to index a rationally smooth point of a quiver Grassmannian for a nilpotent $\Delta_n$-representation.

\appendix
\section{Equivariant Desingularisations of Quiver Grassmannians}\label{app:Desing}
By Lemma~\ref{lma:euler-class-along-resolution}.(3), equivariant desingularisations can be used to compute the equivariant Euler classes at singular points. In \cite{CFR2013} desingularisations of quiver Grassmannians for Dynkin quivers are provided. More general constructions for quiver Grassmannians can be found in \cite{KeSc2014,Scherotzke2017}. The explicit nature of \cite{CFR2013} seemed to us better suited for our purposes and to the approach of the present article, allowing to work with a coordinate description, useful to define torus actions, and hence to obtain the needed equivariant resolutions.

We expect that the construction in \cite{CFR2013} can be generalised to the equioriented cycle, where some of their key assumptions are not satisfied. For the rest of this appendix, we restrict us to the special case of the quiver Grassmannian from Example~\ref{ex:cohomology-generators-loop-quiver}. In this special case, we construct an equivariant resolution of singularities, by adapting methods from \cite{CFR2013b}. 

Following the procedure from \cite[Section~8.1]{CFR2013b}, we obtain
\begin{center}
\begin{tikzpicture}[scale=0.45]
\node at (-1.1-8,0.1) {$\hat{Q} = $};
 \def\centerarc[#1](#2)(#3:#4:#5);%
    {
    \draw[#1]([shift=(#3:#5)]#2) arc (#3:#4:#5);
    }
\centerarc[arrows={-angle 90}](1.5-8,-1.5)(180-48:49:2.15cm);
\centerarc[arrows={-angle 90}](1.5-8,1.5)(-49:-180+48:2.15cm);

    \draw[fill=black] (0-8,0) circle (.12);
    \draw[fill=black] (3-8,0) circle (.12);
  
\node at (1.5-8,1.1) {$a$}; 
\node at (1.5-8,-1.1) {$b$}; 
\node at (-2.2,0.1) {$\mr{and} \ \ \hat{M} = $};
 \def\centerarc[#1](#2)(#3:#4:#5);%
    {
    \draw[#1]([shift=(#3:#5)]#2) arc (#3:#4:#5);
    }
\centerarc[arrows={-angle 90}](1.5,-1.5)(180-61:57:2.15cm);
\centerarc[arrows={-angle 90}](1.5,1.5)(-57:-180+58:2.15cm);

    \node at (0,0) {$\C^4$}; 
    \node at (3,0) {$\C$}; 
  
\node at (1.5,1.15) {$\hat{M}_a$}; 
\node at (1.5,-1.15) {$\hat{M}_b$}; 
\node at (10,0) {$\mr{with} \  ^t\hat{M}_a= \begin{pmatrix}  1\\ 0  \\ 0 \\0  \\ \end{pmatrix} \ \mr{and} \ \hat{M}_b = \begin{pmatrix}  0\\ 1  \\ 0 \\0  \\ \end{pmatrix}.$};
\end{tikzpicture}
\end{center}
\begin{rem}
In the notation of the previous sections, we have $\hat{Q} =\Delta_2$ and $\hat{M} = U(1; 3) \oplus U(1; 1)\otimes\C^2$. Hence $\hat{M}$ is also a nilpotent representation of a quiver of affine type $A$. The shape of $\hat{Q}$ depends on the structure of $M$ and it is in general  not of the same type as $Q$.
\end{rem}
Observe that $\hat{Q}$ and $\hat{M}$ fail to satisfy the assumptions in \cite[Proposition~7.1]{CFR2013b}. Nevertheless, we obtain similar results about the desingularisation as in \cite[Section~7]{CFR2013b}. 

Recall that each dot in the coefficient quiver $Q( \hat{M})$ stands for one basis vector of the vector spaces in the representation. We define a $T$-action on the vector space $\hat{M}_1\oplus\hat{M}_2$ by declaring that the element $(\gamma_0,\gamma_1,\gamma_2,\gamma_3)\in T$ acts on the basis vectors as follows:
\begin{center}
\begin{tikzpicture}[scale=0.45]
\node at (-0.5,3) {$\gamma_1$}; 
\draw[fill=black] (0,3) circle (.12);
\draw[arrows={-angle 90}, shorten >=4, shorten <=4]  (0,3) -- (1.5,3);
\node at (2,3) {$\gamma_1$}; 
\draw[fill=black] (1.5,3) circle (.12);
\draw[arrows={-angle 90}, shorten >=4, shorten <=4]  (1.5,3) -- (0,2);
\node at (-0.8,2) {$\gamma_0\gamma_1$}; 
\draw[fill=black] (0,2) circle (.12);
\node at (-0.5,1) {$\gamma_2$}; 
\draw[fill=black] (0,1) circle (.12);
\node at (-0.5,0) {$\gamma_3$}; 
\draw[fill=black] (0,0) circle (.12);
\end{tikzpicture}
\end{center}
We also consider the $\C^*$-action on $\hat{M}_1\oplus\hat{M}_2$ induced by the generic cocharacter $\C^* \rightarrow T$ given by
\[z\mapsto (z, z, z^3, z^4).\]

It is immediate to see that the above defined $T$- and $\C^*$-actions on $\hat{M}_1\oplus\hat{M}_2$ extend to actions 
on the (non empty) quiver Grassmannians for $\hat{M}$. Since it corresponds to an attractive grading, the $\C^*$-action induces cellularisations of these quiver Grassmannians by Theorem~\ref{trm:cell_decomp-approx-lin-deg-aff-flag}. 

For $U \in \mr{Gr}_\mb{e}(M)$, we denote by $\mathcal{S}_U$ the subvariety of $\mr{Gr}_\mb{e}(M)$ which consists of subrepresentations in $\mr{Gr}_\mb{e}(M)$, which are isomorphic to $U$ (as quiver representations). In $\mr{Gr}_2(M)$ there are two isomorphism classes of subrepresentations with representatives $U_1= A_2$ and $U_2= A_1 \oplus A_1$. Thus we have $\mr{Gr}_2(M) = \mathcal{S}_{U_1} \sqcup \mathcal{S}_{U_2}$. For $U_1$ and $U_2$ we compute the $ \hat{Q}$-representations in the same way, as done for $M$ and obtain $\hat{U}_1  = U(1;3)$ and $\hat{U}_2 = U(1;1)\oplus U(1;1)$. Their dimension vectors are $\bdim \hat{U}_1 = (2,1)$ and $\bdim \hat{U}_2 = (2,0)$ and it follows from the definition of $\hat{M}$ that
\begin{equation}\label{eqn:Appendix}
\mr{Gr}_{(2,1)}( \hat{M} ) \cong \mr{Gr}_1(\C^3) \quad \hbox{and} \quad \mr{Gr}_{(2,0)}( \hat{M})\cong \mr{Gr}_2(\C^3).
\end{equation}
Therefore $\mr{Gr}_{(2,1)}( \hat{M})$ and $\mr{Gr}_{(2,0)}(\hat{M})$ are irreducible and smooth. 

The top dimensional cells in both Grassmannians are attractive loci of the fixed points $\hat{q}_1$ and $\hat{q}_2$ which are isomorphic to $\hat{U}_1\in\mr{Gr}_{(2,1)}(\hat{M})$ and $\hat{U}_2\in \mr{Gr}_{(2,0)}(\hat{M})$ respectively, and correspond to the following successor closed subquivers of $Q(\hat{M})$:
\begin{center}
\begin{tikzpicture}[scale=0.45]
\node at (-1.7,1.5) {$\hat{q}_1 = $};
\draw[fill=black] (0,3) circle (.12);
\draw[arrows={-angle 90}, shorten >=4, shorten <=4]  (0,3) -- (1.5,3);
\draw[fill=black] (1.5,3) circle (.12);
\draw[arrows={-angle 90}, shorten >=4, shorten <=4]  (1.5,3) -- (0,2);
\draw[fill=black] (0,2) circle (.12);
\draw[fill=white] (0,1) circle (.12);
\draw[fill=white] (0,0) circle (.12);
\end{tikzpicture}$\ \ $
\begin{tikzpicture}[scale=0.45]
\node at (-2.8,1.5) {$\mr{and} \quad \hat{q}_2 = $};
\draw[fill=white] (0,3) circle (.12);
\draw[arrows={-angle 90}, shorten >=4, shorten <=4]  (0,3) -- (1.5,3);
\draw[fill=white] (1.5,3) circle (.12);
\draw[arrows={-angle 90}, shorten >=4, shorten <=4]  (1.5,3) -- (0,2);
\draw[fill=black] (0,2) circle (.12);
\draw[fill=black] (0,1) circle (.12);
\draw[fill=white] (0,0) circle (.12);
\end{tikzpicture}
\end{center}
The $T$-action equips both Grassmannians with the structure of a GKM-variety. Starting from these fixed points, we compute the moment graphs of both Grassmannians analogously to Theorem~\ref{trm:comb-moment-graph}. Observe that the labels are different from the ones we would get from Theorem~\ref{trm:comb-moment-graph}, as we do not use the $T$-action as defined in Section~\ref{sec:act-bigger-T}, but rather an action which is compatible with the $T$-action on $\mr{Gr}_2(M)$. More precisely, the moment graph corresponding to $(\mr{Gr}_{(2,1)}(\hat{M}), T)$, respectively to $(\mr{Gr}_{(2,0)}(\hat{M}),T)$, is the full (labelled) subgraph of the moment graph in Example \ref{ex:cohomology-generators-loop-quiver} whose vertex set is $\{p_2,p_3,p_4\}$, respectively $\{p_1,p_2,p_3\}$.

As in \cite[Section~7]{CFR2013b}, for $U\in\{ U_1,U_2\}$ we define the map
\[ \pi_{[U]} : Gr_{\bdim \hat{U}} (\hat{M}) \to \mr{Gr}_2(M). \] 
In our case, it has the explicit form $V= (V_1, V_2)\mapsto V_1$, so that $\pi_{[U_1]}(\hat{q}_1)=p_4$ and $\pi_{[U_2]}(\hat{q}_2)=p_3$. As before, $p_3$ and $p_4$ are the fixed points in $\mr{Gr}_2(M)$ as computed in Example~\ref{ex:cohomology-generators-loop-quiver}.

For our example, the same conclusions as in \cite[Theorem~7.5]{CFR2013b} hold true:
\begin{prop}\label{prop:desing}With the same notation as before, for $U\in\{ U_1,U_2\}$, we have:
\begin{enumerate}
\item $Gr_{\bdim \hat{U}} (\hat{M})$ is smooth and irreducible,
\item the map $\pi_{[U]}$ is projective and $T$-equivariant,
\item the image of $\pi_{[U]}$ is closed in $\mr{Gr}_2(M)$ and contains $\overline{\mathcal{S}_U}$,
\item the map $\pi_{[U]}$ is one-to-one over $\mathcal{S}_U$.
\end{enumerate}
\end{prop}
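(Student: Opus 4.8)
The plan is to verify the four assertions directly, following the strategy of \cite[Theorem~7.5]{CFR2013b} but replacing the structural arguments there (which rely on the hypotheses of \cite[Proposition~7.1]{CFR2013b}, here violated) by explicit computations made possible by the identifications in \eqref{eqn:Appendix} and the concrete shape $\pi_{[U]}(V_1,V_2)=V_1$ of the two maps. Statement (1) requires no further work: it is recorded just above the proposition that, via \eqref{eqn:Appendix}, the source varieties are isomorphic to $\mr{Gr}_1(\C^3)$ and $\mr{Gr}_2(\C^3)$, which are smooth and irreducible.

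For (2) I would first note that both $\mr{Gr}_{\bdim \hat U}(\hat M)$ and $\mr{Gr}_2(M)$ are projective, being closed in (products of) ordinary Grassmannians, so that any morphism between them --- in particular $\pi_{[U]}$ --- is projective, hence proper and closed. The $T$-equivariance is built into the construction: the characters $\gamma_1,\gamma_0\gamma_1,\gamma_2,\gamma_3$ attached to the four basis vectors of the degree-$1$ piece $\hat M_1$ coincide with the characters attached to $b_{1,0},b_{1,1},b_{2,0},b_{3,0}$ by the $T$-action of \S\ref{sec:act-bigger-T} on $M=M_1$; since $\pi_{[U]}$ only records this degree-$1$ component, it intertwines the two actions.

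For (3) the closedness of the image is immediate from properness, so the point is to exhibit, for each $V_1\in\mathcal{S}_U$, a preimage, after which $\overline{\mathcal{S}_U}\subseteq\mr{im}(\pi_{[U]})$ follows by taking closures. Writing $A=\hat M_b\hat M_a$ for the induced nilpotent endomorphism of $\hat M_1=M_1$ and using $^t\hat M_a=(1,0,0,0)$ and $\hat M_b=(0,1,0,0)^t$, one gets $\mr{im}(A)=\langle w_3\rangle$ and $\ker A=\ker\hat M_a$. If $U=U_1$, so $V_1\cong A_2$ and $A|_{V_1}\neq 0$, then $w_3\in V_1$, and the pair $(V_1,\hat M_2)$ satisfies $\hat M_a(V_1)\subseteq\hat M_2$ and $\hat M_b(\hat M_2)=\langle w_3\rangle\subseteq V_1$, hence is a subrepresentation of dimension vector $(2,1)$ mapping to $V_1$. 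If $U=U_2$, so $V_1\cong A_1\oplus A_1$ and $V_1\subseteq\ker A=\ker\hat M_a$, then $(V_1,0)$ is a subrepresentation of dimension vector $(2,0)$ mapping to $V_1$. Statement (4) then falls out of the same dimension-vector bookkeeping: over $\mathcal{S}_{U_1}$ the constraint $\dim V_2=1=\dim\hat M_2$ forces $V_2=\hat M_2$, while over $\mathcal{S}_{U_2}$ the constraint $\dim V_2=0$ forces $V_2=0$; in either case the fibre is the single point just produced, so $\pi_{[U]}$ is one-to-one there.

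The only genuinely delicate point I anticipate is confirming that these forced pairs are not merely $\hat M$-subrepresentations but are isomorphic to $\hat U$ (so that they truly lie in $\mr{Gr}_{\bdim \hat U}(\hat M)$ and exhaust the fibre), and, dually, that no other subrepresentation of $\hat M$ over a point of $\mathcal{S}_U$ can occur; both amount to checking that the rank of $A|_{V_1}$ prescribed by the isomorphism type of $V_1$ is compatible with the given dimension vector. Because $\hat Q=\Delta_2$ and $\hat M$ are so small, this is a short direct verification rather than the general argument needed in \cite{CFR2013b}, and it is precisely what lets us assert that the conclusions of their Theorem~7.5 persist even though the hypotheses of their Proposition~7.1 fail here.
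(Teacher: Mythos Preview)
Your argument is correct and follows essentially the same route as the paper's proof, only with the explicit fibre computations spelled out where the paper just says ``by construction $\pi_{[U]}$ is one-to-one even over $\overline{\mathcal{S}_U}$''. Two small remarks: from $^t\hat M_a=(1,0,0,0)$ and $\hat M_b=(0,1,0,0)^t$ the image of $A=\hat M_b\hat M_a$ is the span of the \emph{second} basis vector, not $w_3$; and your final paragraph's worry about isomorphism types is unnecessary, since $\mr{Gr}_{\bdim \hat U}(\hat M)$ only fixes the dimension vector, so the forced pairs $(V_1,\hat M_2)$ and $(V_1,0)$ need only be checked to be subrepresentations, which you have already done.
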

\begin{proof}
Part (1) follows from \eqref{eqn:Appendix}, as already noticed. Since both Grassmannians of subspaces are projective, the projectivity of $\pi_{[U]}$ is clear.

The $T$-equivariance follows immediately from the coordinate description of the maps $\pi_{[U]}$, induced by the cellular decompositions of the involved quiver Grassmannians. The image of  $\pi_{[U]}$ is closed, since projective morphisms are closed. By construction  $\pi_{[U]}$ is one-to-one even over $\overline{\mathcal{S}_U}$ and hence its image contains $\overline{\mathcal{S}_U}$ .
\end{proof}
\begin{cor}\label{cor:Resolution}With the same notation as before. Then, for $U\in\{ U_1,U_2\}$, the map 
 \[ \pi = \coprod_{U \in \{U_1,U_2\}} \pi_{[U]}  : \coprod_{U \in \{U_1,U_2\}} Gr_{\bdim \hat{U}} (\hat{M}) \to \mr{Gr}_2(M) \] 
 is a $T$-equivariant desingularisation of $\mr{Gr}_2(M)$.
\end{cor}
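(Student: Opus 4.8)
The plan is to deduce the corollary formally from the four properties collected in Proposition~\ref{prop:desing}, after recalling what a $T$-equivariant desingularisation of a possibly reducible, non-normal variety $X$ should be: a proper $T$-equivariant surjection $\pi\colon \tilde X\to X$ from a smooth $T$-variety $\tilde X$ that is an isomorphism over a dense open $T$-stable subset of $X$. For reducible $X$ this amounts to resolving each irreducible component separately, which is exactly the shape of our disjoint union $\coprod_{U}Gr_{\bdim\hat U}(\hat M)$.

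First I would assemble the formal properties. By Proposition~\ref{prop:desing}(1) each $Gr_{\bdim\hat U}(\hat M)$ is smooth and irreducible, hence the source is smooth; by Proposition~\ref{prop:desing}(2) each $\pi_{[U]}$ is projective and $T$-equivariant, so $\pi$ is projective (in particular proper) and $T$-equivariant. For surjectivity I would use the stratification into isomorphism types $\mr{Gr}_2(M)=\mathcal{S}_{U_1}\sqcup\mathcal{S}_{U_2}$ together with Proposition~\ref{prop:desing}(3), which gives $\mr{Im}(\pi_{[U]})\supseteq\overline{\mathcal{S}_U}\supseteq\mathcal{S}_U$; thus $\mr{Im}(\pi)\supseteq\mathcal{S}_{U_1}\cup\mathcal{S}_{U_2}=\mr{Gr}_2(M)$.

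Next I would identify the irreducible components. I would show $\mr{Im}(\pi_{[U]})=\overline{\mathcal{S}_U}$ by a dimension count: $\pi_{[U]}$ is one-to-one over the dense open subset $\mathcal{S}_U$ of $\overline{\mathcal{S}_U}$ by Proposition~\ref{prop:desing}(4), and $Gr_{\bdim\hat U}(\hat M)$ is irreducible with $\pi_{[U]}^{-1}(\mathcal{S}_U)$ dense (by construction the generic point maps to a subrepresentation isomorphic to $U$), so the irreducible closed image has the same dimension as $\overline{\mathcal{S}_U}$ and hence equals it. By \eqref{eqn:Appendix} both $\overline{\mathcal{S}_{U_1}}$ and $\overline{\mathcal{S}_{U_2}}$ are irreducible of dimension $2$; being distinct (their dense open strata $\mathcal{S}_{U_1},\mathcal{S}_{U_2}$ are disjoint, so the closures cannot coincide) closed subvarieties of equal dimension whose union is $\mr{Gr}_2(M)$, neither is contained in the other, so they are precisely its irreducible components. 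Each $\pi_{[U]}$ is then a proper $T$-equivariant morphism from a smooth irreducible variety onto a component, one-to-one over the dense open stratum $\mathcal{S}_U$.

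The main obstacle is the final upgrade from ``one-to-one over $\mathcal{S}_U$'', a set-theoretic bijection, to an honest isomorphism over a dense open, which birationality requires; Zariski's Main Theorem cannot be invoked with $\mr{Gr}_2(M)$ as target, as this variety is non-normal. I would circumvent this by restricting to the smooth (hence normal) locus $(\overline{\mathcal{S}_U})_{\mr{sm}}$ of the component: over the dense open $T$-stable set $V:=\mathcal{S}_U\cap(\overline{\mathcal{S}_U})_{\mr{sm}}$ the restriction $\pi_{[U]}^{-1}(V)\to V$ is proper and quasi-finite, hence finite, and birational onto a normal target, so it is an isomorphism; alternatively one reads off an explicit inverse from the coordinate description of $\pi_{[U]}$ inherited from \cite{CFR2013b}. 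This makes each $\pi_{[U]}$ a $T$-equivariant resolution of its component and $\pi$ the desired desingularisation. Finally I would record that each $Gr_{\bdim\hat U}(\hat M)\cong\Pbb^2$ has finitely many $T$-fixed points, so that Lemma~\ref{lma:euler-class-along-resolution}(3) may indeed be applied to $\pi$ to compute $\mr{Eu}_T(p_2,Z_4)$ and $\mr{Eu}_T(p_3,Z_4)$, which is the purpose of the construction.
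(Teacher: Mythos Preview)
Your proof is correct and follows essentially the same route as the paper's: deduce the corollary formally from the four items of Proposition~\ref{prop:desing}. The paper's own proof is very brief---it records that $\overline{\mathcal{S}_{\hat U}}=Gr_{\bdim\hat U}(\hat M)$ (so the source is smooth, irreducible, and of the correct dimension) and then defers the remaining verifications to \cite[Corollary~7.7]{CFR2013b}. Your write-up unpacks exactly those deferred verifications.

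Two small comparative remarks. First, the density of $\pi_{[U]}^{-1}(\mathcal{S}_U)$ in the source, which you obtain somewhat indirectly via a dimension count, is in the paper an immediate consequence of the observation $\overline{\mathcal{S}_{\hat U}}=Gr_{\bdim\hat U}(\hat M)$ together with $\pi_{[U]}(\mathcal{S}_{\hat U})\subseteq\mathcal{S}_U$; invoking this makes your component-identification step cleaner. Second, your treatment of the birationality issue---passing to the smooth locus so that Zariski's Main Theorem applies despite the non-normality of $\mr{Gr}_2(M)$---is more careful than the paper, which in the proof of Proposition~\ref{prop:desing} simply asserts that in this explicit example $\pi_{[U]}$ is one-to-one even over $\overline{\mathcal{S}_U}$ (a fact readable from the coordinate description), and otherwise relies on the argument in \cite{CFR2013b}.
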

\begin{proof}
By Proposition~\ref{prop:desing}, $\pi$ is $T$-equivariant, and the closure of $\mathcal{S}_{\hat{U}}$ is the whole quiver Grassmannian $Gr_{\bdim \hat{U}} (\hat{M})$. Hence it is smooth and of the same dimension as the quiver Grassmannian. The rest of the proof is analogous to the proof of \cite[Corollary~7.7]{CFR2013b}.
\end{proof}
By  Corollary~\ref{cor:Resolution}, we can apply Lemma~\ref{lma:euler-class-along-resolution}.(3) to $\pi$ and compute
\begin{align*}
\mr{Eu}_T(p_2,Z_4)^{-1} &=\frac{\epsilon_2-\epsilon_3-\delta}{(\epsilon_3-\epsilon_1)(\epsilon_3-\epsilon_2)(\epsilon_2-\epsilon_1-\delta)}\\
\mr{Eu}_T(p_3,Z_4)^{-1} &=\frac{\epsilon_2-\epsilon_3+\delta}{(\epsilon_3-\epsilon_2)(\epsilon_2-\epsilon_1)(\epsilon_3-\epsilon_1-\delta)}.
\end{align*}
\begin{rem}
In particular, this example is compatible with the irreducibility conjecture, for the resolving quiver Grassmannians $Gr_{\bdim \hat{U}} (\hat{M})$, as stated in \cite[Remark~7.8]{CFR2013b}, whereas in general quiver Grassmannians for the cycle are not irreducible.
\end{rem}

\end{document}